\newtheorem{theorem}[equation]{Theorem}
\newtheorem{lemma}[equation]{Lemma}
\newtheorem{corollary}[equation]{Corollary}
\newtheorem{proposition}[equation]{Proposition}
\theoremstyle{definition}
\newtheorem{definition}[equation]{Definition}
\newtheorem{remark}[equation]{Remark}
\newtheorem{example}[equation]{Example}
\numberwithin{equation}{section}
\def\bC{{\mathbb C}}
\def\bQ{{\mathbb Q}}
\def\bR{{\mathbb R}}
\def\bZ{{\mathbb Z}}
\def\bT{{\mathbb T}}
\def\cO{{\mathcal{O}}}
\def\cR{{\mathcal R}}
\def\eps{\varepsilon}
\def\ra{\rightarrow}
\def\wt{\widetilde}
\def\ovl{\overline}
\def\Ab{\operatorname{Ab}}
\def\Hilb{\operatorname{Hilb}}
\def\Hom{\operatorname{Hom}}
\def\Spec{\operatorname{Spec}}
\def\age{\operatorname{age}}
\def\diag{\operatorname{diag}}
\def\div{\operatorname{div}}
\def\deg{\operatorname{deg}}
\def\det{\operatorname{det}}
\def\Mov{{\operatorname{Mov}}}
\def\Cl{\operatorname{Cl}}
\def\SL{\operatorname{SL}}
\def\GL{\operatorname{GL}}
\def\SO{\operatorname{SO}}
\def\BD{\operatorname{BD}}
\def\BT{\operatorname{BT}}
\def\BO{\operatorname{BO}}
\def\BI{\operatorname{BI}}
\definecolor{zielony}{rgb}{0.5, 0.9, 0.1}
\definecolor{czerwony}{rgb}{0.9, 0.2, 0.1}
\definecolor{niebieski}{rgb}{0.3, 0.1, 0.9}
\begin{document}

\title[Crepant resolutions of 3-dimensional quotients]
 {Crepant resolutions of 3-dimensional quotient singularities via Cox rings}

\author[M.~Donten-Bury]{Maria Donten-Bury}
\address{Instytut Matematyki UW, Banacha 2, 02-097 Warszawa, Poland}
\email{M.Donten@mimuw.edu.pl, M.Grab@mimuw.edu.pl}

\author[M.~Grab]{Maksymilian Grab}

\subjclass[2010]{14E15, 14E30, 14E16, 14L30, 14L24, 14C20}

\keywords{quotient singularity, resolution of singularities, crepant resolution, Cox ring}

\date{\today}


\begin{abstract}
We study Cox rings of crepant resolutions of quotient singularities $\bC^3/G$ where $G$ is a finite subgroup of $\SL(3,\bC)$. We use them to obtain information on the geometric structure of these resolutions, number of different resolutions and relations between them. In particular, we treat explicitly several examples where $G$ contains elements of age~2.
\end{abstract}

\maketitle

\section{Introduction}

It is long known that for du Val singularities, i.e. quotients $\bC^2/G$ for finite subgroups $G \subset \SL(2,\bC)$, there is a unique minimal resolution, which is crepant. The study of these resolutions led McKay to observing a one-to-one correspondence between the components of the exceptional locus and irreducible representations of $G$, or its conjugacy classes. Currently many variants of the \emph{McKay correspondence} for higher dimensional quotient singularities are considered; some of them are already theorems (see e.g.~\cite{ItoReid} for a case most relevant to this article), some still conjectures.

In this paper we concentrate on resolutions of 3-dimensional quotient singularities, which have also been studied extensively. It is known, see~\cite{ItoRes3, ItoMonomial, Markushevich168, RoanRes3}, that a crepant resolution of a quotient singularity $\bC^3/G$, where $G$ is a subgroup of $\SL(3,\bC)$, exists. Such a resolution is usually non-unique, but all crepant resolutions of a quotient singularity differ by a small $\bQ$-factorial modification, a sequence of flops. Thus it is natural to investigate the whole (finite) family of crepant resolutions, not only a chosen example.

As shown in~\cite{BKR}, a crepant resolution of $\bC^3/G$ can be constructed as an invariant Hilbert scheme $G$-$\Hilb$, which was conjectured by Nakamura in~\cite{GHilb} (see also~\cite{CrawReid} for the case of $G$ abelian). In~\cite{CrawIshii} all small $\bQ$-factorial modifications of $G$-$\Hilb$ for $G$ abelian are analysed and constructed as certain moduli spaces. However, in general this point of view does not seem sufficient for finding the set of all crepant resolutions of a singularity, or even for determining their number. Another successful approach to investigating such resolutions (which can be considered an extension of $G$-Hilbert schemes) goes via quiver representation theory, see e.g.~\cite{superpotentials, homologicalMMP, NdCS}. However, up to now significant results have been obtained only for groups not containing any elements of age~2. Geometrically, this condition is equivalent to saying that the fibre over the origin consists only of 1-dimensional components (see Theorem~\ref{theorem_mckay} and Corollary~\ref{corollary_mckay}, or the original source~\cite{ItoReid}), which apparently makes these resolutions easier to deal with. See e.g.~\cite{NdCS} for application of this method to finite subgroups $G \subset \SO(3)$, in particular to representations of dihedral groups.

Our aim is to describe Cox rings of crepant resolutions of 3-dimensional quotient singularities and use them to get more insight into the structure of the set of all crepant resolutions of such a singularity. Our study of Cox rings of resolutions of quotient singularities, initiated in~\cite{81resolutions} and developed in~\cite{SymplCox, CompCox} in two different directions (symplectic and algorithmic), gives us a solid basis for the work in the 3-dimensional case. We start from producing a finite generating set of the Cox ring $\cR(X_0)$ of a crepant resolution $X_0 \ra \bC^3/G$, based on the structure of the Cox ring $\cR(\bC^3/G)$ of the singularity and using the McKay correspondence to take the information on the exceptional divisors into account. See Proposition~\ref{prop_structure_cox_ring} for the details; note that although it characterises all elements of $\cR(X_0)$, finding a small generating set is still a nontrivial combinatorial problem. We solve it in a number of cases (listed below), thus obtaining $\cR(X_0)$ in a form suitable for further analysis and computations. In particular, we are able to describe in detail several cases where $G$ has elements of age~2, i.e. there are divisorial components in the fibre over the origin.

 The Cox ring $\cR(X_0)$ contains in a natural way the information on all resolutions, which differ from $X_0$ by a sequence of flops. That is, if one knows the Cox ring $\cR(X_0)$, one can extract from it the subdivision of the movable cone $\Mov(X_0)$ into Mori (or GIT) chambers, using methods from toric geometry. In particular, one can find the number of all crepant resolutions of a given quotient singularity and describe how different resolutions are linked by sequences of flops. Moreover, the geometric structure of the resolution, i.e. components of the exceptional set and their intersections, can be determined from the structure of~$\cR$ via Geometric Invariant Theory (this, however, usually requires a lot of work). For example, one can observe explicitly the behaviour of flops between resolutions.

 We realise this program in three different settings. First, in section~\ref{section_dihedral} we consider~$G$ being a representation of a dihedral group, hence $G \subset \SO(3)$ and it does not contain elements of age~2. We confirm the results of~\cite{NdCS}, give a description of the chamber structure of the movable cone and, in the odd case, of the geometry of resolutions via GIT. This section should be treated as an extended example of our methods. Next, in section~\ref{section_reducible} we study non-abelian $G \subset \SL(3,\bC)$ which are reducible representations. That is, they can be decomposed as a product of a 2-dimensional and a 1-dimensional representation of the abstract group isomorphic to $G$, not necessarily faithful. Note that we distinguish the reducible case from the irreducible one, because in the former the Cox ring of a crepant resolution is expected to have a simpler structure: only one trinomial relation between its generators. We prove this statement and provide the single trinomial relation for the Cox ring. We also present two examples of~$G$ which contain elements of age~2 (as most of such groups) and give some information on their Mori chamber structure in the cone of movable divisors. Finally, in section~\ref{section_irreducible} we investigate three cases where~$G$ is an irreducible representation: a 21-element (trihedral) group which is the smallest example with a divisor in the fibre over the origin, a 27-element (trihedral) group which is a representation of the Heisenberg group, and a 54-element (non-trihedral) group which is a double extension of the previous one. We give generators for their Cox rings, compute the number of Mori chambers, and in the Appendix we provide some data concerning geometry of the resolutions in the 21-element case. Finding the general formula for the Cox ring generators seems to be much harder in the irreducible case, but already studying presented examples gives new information on crepant resolutions of 3-dimensional quotient singularities.

\subsection*{Acknowledgements}

The authors are very grateful to Michael Wemyss for asking questions which inspired this work. They would like to thank Jaros\l{}aw Wi\'sniewski for discussions and Simon Keicher for his help with some useful software packages.

The first author was supported by the Polish National Science Center project 2013/08/A/ST1/00804. This project was started when she held a Dahlem Research School Postdoctoral Fellowship at Freie Universit\"at Berlin. The second author was supported by the Polish National Science Center project 2015/17/N/ST1/02329.

\section{Description of the methods}

\subsection{Cox rings of resolutions of quotient singularities}\label{section_methods_Cox}

Let $X$ be a normal algebraic variety with finitely generated class group $\Cl(X)$ and only constant invertible regular functions. The Cox ring of~$X$ is
\begin{equation*}
\cR(X) = \bigoplus_{[D]\in \Cl(X)}H^{0}(X,\cO_X(D)),
\end{equation*}
with the multiplication of sections coming from identifying them with rational functions: $H^{0}(X,\cO_X(D)) = \{f\in \bC(X)^{*}\ : \ \div f + D \ge 0\}\cup \{0\} \subset \bC(X)$. For the details check~\cite[Sect.~1.4]{CoxRings}.

Assume that $\cR(X)$ is finitely generated as a $\bC$-algebra. Then we may consider a variety $\Spec \cR(X)$, endowed with a natural action of the Picard torus $\bT_X=\Hom(\Cl(X),\bC^*)$, coming from the $\Cl(X)$-grading on $\cR$. In this case a lot of information about birational geometry of $X$ can be read out of $\Spec \cR (X)$ with the $\bT_X$-action via Geometric Invariant Theory: one can reconstruct not only $X$, but also its small $\bQ$-factorial modifications, as GIT quotients of $\Spec \cR$ by $\bT_X$ (under some technical assumptions which will be satisfied in our setting), see~\cite[Sect.~3.3]{CoxRings}. The set of small $\bQ$-factorial modifications of~$X$ is in one-to-one correspondence with the set of GIT chambers in the cone of movable divisors $\Mov(X)$; we continue this topic in section~\ref{section_methods_Mov}.

We recall what is known about Cox rings of crepant resolutions of quotient singularities $\bC^{n}/G$ for finite subgroups $G\subset \SL(n,\bC)$ in general, though in this article we are interested only in the case $n=3$. The idea is to understand the Cox ring $\cR(X)$ of a crepant resolution $X$ without knowing this resolution before, and then to use $\cR(X)$ to recover geometric data, such as the structure of $X$ and other resolutions, which are its small $\bQ$-factorial modifications, and also relations between them. The main tool used to determine the Cox ring is the McKay correspondence (as in~\cite{ItoReid}), which allows us to read out some information about the geometry of crepant resolutions in codimension~1 from the structure of the group~$G$. 

\begin{definition}
A~\textbf{resolution of singularities} is a proper birational morphism $\varphi:X\to Y$ which is an isomorphism outside the singular point set of~$Y$. A~resolution is called \textbf{projective} if $\varphi$ is a projective morphism. It~is \textbf{crepant} if $\varphi^*K_Y = K_X$.
\end{definition}

We take $Y = \bC^n/G$ for a finite subgroup $G \subset \SL(n,\bC)$. All resolutions considered in this article will be projective. Note that a crepant resolution, if it exists, is a minimal model of the quotient singularity $\bC^n/G$.

To formulate the McKay correspondence and describe the construction of a generating set of the Cox ring of a resolution $X \ra \bC^n/G$ we introduce monomial valuations corresponding to conjugacy classes in~$G$ and the age of a conjugacy class. Fix a primitive root of unity $\zeta_r$ of order $r$. If $g\in G$ has order $r$ then we can diagonalize it so that it takes the form $\diag(\zeta_{r}^{a_1},\ldots,\zeta_{r}^{a_n})$, where $0 \le a_{i} < r$, $a_{i}\in \bZ$. When such a diagonalization is chosen, we may consider a monomial valuation corresponding to~$g$ and compute the age of~$g$.

\begin{definition}
Let $a_1,\ldots,a_n$ be positive integers. The corresponding \textbf{monomial valuation} $\nu$ on $\bC[x_{1},\ldots, x_{n}]$ is
$$\nu \left(\sum_{\alpha}c_{\alpha}x^{\alpha}\right) = \min\{\deg(x^{\alpha})\ : c_{\alpha}\neq 0\}$$
where $\deg(x^{\alpha}) = \sum_{i=1}^n \alpha_ia_i$. It extends uniquely to the field of rational functions. If $\diag(\zeta_{r}^{a_1},\ldots,\zeta_{r}^{a_n})$ is a diagonalization of $g \in G$ as above, we denote the monomial valuation for $a_1,\ldots,a_n$ by~$\nu_g$.
\end{definition}

\begin{definition}
Let $\diag(\zeta_{r}^{a_1},\ldots,\zeta_{r}^{a_n})$ be a diagonalization of $g \in G$. The \textbf{age} of~$g$ is $\age(g) = \frac{1}{r}\sum_{i=1}^{n}a_{i}$. Elements of age~1 will be called \textbf{junior} elements.
\end{definition}

Note that both the monomial valuation and the age of $g\in \SL(n,\bC)$ are invariant under conjugation, but depend on the choice of $\zeta_r$.

By the McKay correspondence we understand the following result of Ito and Reid~\cite{ItoReid}.

\begin{theorem}\label{theorem_mckay}
Let $G\subset \SL(n,\bC)$ be a finite subgroup and let $\varphi:X\to \bC^{n}/G$ be a crepant resolution.
Then there is a one-to-one correspondence between irreducible exceptional divisors of $\varphi$ and conjugacy classes of age~1 in $G$. Moreover, if a divisor~$E$ is sent to the class of element $g$ of order $r$, then the divisorial valuation $\nu_{E}$ is equal to $\frac{1}{r}\nu_{g}|_{\bC(X)}$ (where we identify $\bC(X)$ with $\bC(x_1,\ldots,x_n)^{G}$).
\end{theorem}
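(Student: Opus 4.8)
The plan is to translate the statement into valuation theory on the covering $\pi\colon\bC^n\to Y=\bC^n/G$ and then to match discrepancies with ages. First I would record the basic geometry of $\pi$: since $G\subset\SL(n,\bC)$ contains no pseudoreflections, $\pi$ is \'etale in codimension one, so $Y$ is Gorenstein canonical and $\pi^*K_Y=K_{\bC^n}$ (i.e.\ $\pi$ is crepant). Writing $K=\bC(x_1,\dots,x_n)$ and identifying $\bC(X)=K^G$, every exceptional prime divisor $E$ of $\varphi$ gives a divisorial valuation $\nu_E$ on $K^G$ with center in $\Sing Y$; by Galois theory it extends to a single $G$-orbit of divisorial valuations $w$ on $K$. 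I would fix the two pieces of dictionary I need: the ramification identity $w|_{K^G}=e_w\,\nu_E$ and the Riemann--Hurwitz comparison of log discrepancies $A_{\bC^n}(w)=e_w\,A_Y(\nu_E)$, the latter using crepancy of $\pi$ so that the different contributes nothing. In particular $E$ is crepant exactly when $A_{\bC^n}(w)=e_w$.

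For the easy direction I would start from a junior element $g=\diag(\zeta_r^{a_1},\dots,\zeta_r^{a_n})$, set $v_g=\tfrac1r\nu_g|_{K^G}$, and check it is a well-defined $\bZ$-valued divisorial valuation: any $g$-invariant monomial $x^m$ satisfies $\sum m_i a_i\equiv 0\pmod r$, so $\nu_g$ takes values in $r\bZ$ on $K^G$; conjugation invariance (already noted above) shows $v_g$ depends only on the class of $g$. Its discrepancy I would compute in the toric model of the cyclic quotient by $\langle g\rangle$: the ray through the junior-simplex lattice point $v=\tfrac1r(a_1,\dots,a_n)$ gives a toric divisor whose valuation on invariant monomials is exactly $\langle m,v\rangle=\tfrac1r\nu_g(x^m)$ and whose log discrepancy over $Y$ is $\langle(1,\dots,1),v\rangle=\tfrac1r\sum a_i=\age(g)$. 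Hence $\age(g)=1$ forces discrepancy $0$, and since $X$ is a minimal model of $Y$ it must extract every exceptional divisorial valuation of discrepancy $0$; so $v_g$ is realized by an exceptional divisor of $\varphi$.

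The hard direction --- that \emph{every} crepant exceptional divisor arises this way --- is where the real work lies. Given such $E$, I would take an extension $w$ with $A_{\bC^n}(w)=e_w$ and show $w$ is a monomial valuation in coordinates diagonalizing an element of its decomposition group $D_w=\{h\in G:h\cdot w=w\}$. The intended route is: set $b_i=w(x_i)$ and compare $w$ with the monomial valuation $\mu$ of weights $(b_i)$; one always has $\mu\le w$ pointwise and $A_{\bC^n}(\mu)=\sum b_i$, while the monomial valuation should minimize the log discrepancy among valuations with these coordinate values, so that $A_{\bC^n}(w)\ge\sum b_i$ with equality forcing $w=\mu$. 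Matching this against the crepancy identity $A_{\bC^n}(w)=e_w$ would pin $w=\mu$ monomial and identify the primitive generator of its value semigroup with a lattice point $\tfrac1r(a_1,\dots,a_n)$ on the junior simplex; the diagonal element $g=\diag(\zeta_r^{a_i})\in D_w$ then satisfies $w=\nu_g$, $e_w=r$ and $\sum a_i=r$, i.e.\ $\age(g)=1$. I expect the genuine obstacle to be precisely this monomialization step: controlling the decomposition group of $w$ and establishing the sharp log-discrepancy inequality that forces a crepant valuation to be monomial.

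Finally I would assemble the correspondence. From $w=\nu_g$ and $e_w=r$ the ramification identity gives $\nu_E=\tfrac1r\nu_g|_{K^G}$, which is the asserted valuation formula; conjugation invariance makes the assignment $E\mapsto[g]$ well defined. Injectivity follows by evaluating distinct $v_g$ on suitable invariant monomials, and surjectivity is exactly the easy direction of the second paragraph, completing the bijection between exceptional divisors of $\varphi$ and junior conjugacy classes.
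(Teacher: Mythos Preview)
The paper does not prove this statement: Theorem~\ref{theorem_mckay} is quoted from Ito--Reid~\cite{ItoReid} and used as a black box, so there is no proof in the paper to compare against beyond the citation.

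That said, your outline is close in spirit to the original Ito--Reid argument, and the easy direction is essentially correct. The toric discrepancy computation for $\bC^n/\langle g\rangle$ is right, crepancy of $\bC^n/\langle g\rangle\to Y$ transfers it to~$Y$, and smoothness of~$X$ (hence terminality) forces every discrepancy-$0$ valuation over~$Y$ to appear as a prime divisor on~$X$. One point you skip: for the formula $\nu_E=\tfrac1r\nu_g|_{\bC(X)}$ to hold with the stated normalization you need the inertia group of $\nu_g$ in~$G$ to be exactly $\langle g\rangle$; this follows a posteriori from canonicity of~$Y$, since a strictly larger inertia group would push the log discrepancy of the induced valuation below $\age(g)=1$.

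The real gap is the one you already flag. From log canonicity of the snc pair $(\bC^n,\sum\{x_i=0\})$ you get $A_{\bC^n}(w)\ge\sum_i w(x_i)$, and combined with your Riemann--Hurwitz identity $A_{\bC^n}(w)=e_w$ this yields only $\sum_i w(x_i)\le e_w$. Nothing you have written forces equality, so you cannot yet conclude that $w$ is monomial. The missing ingredient, which is where the actual content of Ito--Reid lies, is the action of the cyclic inertia group $I_w$ on the associated graded of the $w$-filtration: if $g=\diag(\zeta_r^{a_i})$ generates $I_w$ with $r=e_w$ and $0\le a_i<r$, then in characteristic~$0$ this action forces $w(x_i)\equiv a_i\pmod r$ in coordinates diagonalizing~$g$. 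Feeding these congruences and the condition $g\in\SL(n,\bC)$ (so $\sum a_i\equiv 0\pmod r$) back into the bound $\sum w(x_i)\le r$ yields $\sum a_i=r$ (hence $g$ is junior), $w(x_i)=a_i$, and only then does the equality case of the log-canonical inequality pin down $w=\nu_g$. Your log-discrepancy inequality alone is not sharp enough to replace this inertia-on-graded step.
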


\begin{corollary}\label{corollary_mckay}
If $n = 3$, then we can compose the above bijection with the involution sending $g$ to $g^{-1}$ to obtain that irreducible exceptional divisors of~$\varphi$ which are contracted to the point $[0] \in \bC^3/G$ are in one-to-one correspondence with conjugacy classes of age~2 in~$G$.
\end{corollary}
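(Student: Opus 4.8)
The plan is to layer two elementary facts on top of the Ito--Reid correspondence of Theorem~\ref{theorem_mckay}: an arithmetic comparison of $\age(g)$ with $\age(g^{-1})$, and a geometric identification of which exceptional divisors are contracted to $[0]$. First I would record the arithmetic. Fix $g \in G$ of order $r$ with diagonalization $\diag(\zeta_r^{a_1}, \zeta_r^{a_2}, \zeta_r^{a_3})$, $0 \le a_i < r$. Since $\det g = 1$ we have $a_1 + a_2 + a_3 \equiv 0 \pmod r$, and for $g \ne \id$ the exponents are not all zero, so the sum is $r$ or $2r$ and $\age(g) \in \{1,2\}$. The element $g^{-1}$ diagonalizes with exponents $b_i$, where $b_i = r - a_i$ if $a_i \ne 0$ and $b_i = 0$ if $a_i = 0$; hence the $i$-th coordinate contributes $1$ to $\age(g) + \age(g^{-1})$ exactly when $a_i \ne 0$, giving
\[
\age(g) + \age(g^{-1}) = \#\{\, i : a_i \ne 0 \,\}.
\]
If $\age(g) = 2$ then no $a_i$ vanishes — a vanishing exponent would force the remaining two, each less than $r$, to sum to $2r$, which is impossible — so $g$ has no eigenvalue equal to $1$ and $\age(g^{-1}) = 1$. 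Conversely, if $\age(g) = 1$ and $g$ has no eigenvalue $1$, then $\age(g^{-1}) = 2$.

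Consequently the involution $g \mapsto g^{-1}$ on conjugacy classes restricts to a bijection between age-$2$ classes and those age-$1$ classes $[h]$ for which $h$ has no eigenvalue equal to $1$ (its inverse providing the age-$2$ representative). So it remains to show that, under Theorem~\ref{theorem_mckay}, this distinguished set of age-$1$ classes corresponds exactly to the exceptional divisors contracted to $[0]$.

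For this geometric step, let $E$ be the divisor assigned to a junior class $[h]$ of order $r$, so $\nu_E = \tfrac1r \nu_h|_{\bC(X)}$ with $\bC(X) = \bC(x_1,x_2,x_3)^G$. The image $\overline{\varphi(E)}$ is the center of $\nu_E$ on $\bC^3/G$, which I would compute from the center of $\nu_h$ on $\bC^3$: for a monomial valuation with weights $a_1, a_2, a_3$ this center is the coordinate subspace cut out by the variables of positive weight, namely $Z = \{x_i = 0 \text{ for all } i \text{ with } a_i \ne 0\}$. Because the quotient map $\pi : \bC^3 \to \bC^3/G$ is finite, restricting $\nu_h$ to the invariant field carries the center $Z$ to $\pi(Z)$, so $\overline{\varphi(E)} = \pi(Z)$. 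Now $\pi(Z)$ is the single point $[0]$ precisely when $Z = \{0\}$, i.e. precisely when every $a_i \ne 0$, which is exactly the condition that $h$ has no eigenvalue equal to $1$. Combining this equivalence with the bijection of the previous paragraph yields the asserted correspondence between age-$2$ classes and exceptional divisors contracted to $[0]$.

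The main obstacle is the geometric step, specifically the claim $\overline{\varphi(E)} = \pi(Z)$: this rests on the compatibility of centers of divisorial valuations with the finite morphism $\pi$ (a lying-over statement for valuation centers), together with the fact that $\varphi(E)$ is genuinely the center of $\nu_E$ on the quotient. The arithmetic of ages and the induced bijection on conjugacy classes are routine once the identity $\age(g) + \age(g^{-1}) = \#\{ i : a_i \ne 0 \}$ is in hand.
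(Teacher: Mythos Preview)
Your argument is correct and is precisely the unpacking the paper leaves implicit: the corollary in the paper carries no separate proof beyond the hint in its statement (compose with $g\mapsto g^{-1}$), and you have supplied exactly the two missing ingredients---the age identity $\age(g)+\age(g^{-1})=\#\{i:a_i\neq 0\}$ and the identification of $\varphi(E)$ with the image under $\pi$ of the center of the monomial valuation on~$\bC^3$. The only point worth tightening is notational: $\varphi(E)$ is already closed (as $\varphi$ is proper) and $\pi(Z)$ is closed (as $\pi$ is finite), so the closure bars are unnecessary; and the ``compatibility of centers with finite morphisms'' you flag as the main obstacle is indeed the standard lying-over statement, so your caution there is well placed but no genuine gap remains.
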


\begin{remark}
Note that though in dimension~3 a crepant resolution for $G \subset \SL(3,\bC)$ always exists, in general it happens that there is no crepant resolution (this is common already for $n=4$). However, Theorem~\ref{theorem_mckay} was proved for any minimal model of a quotient singularity. Such a minimal model always exists, which follows from~\cite{BCHM}.
\end{remark}

We recall an approach to constructing a generating set of a Cox ring of a crepant resolution of a quotient singularity. It was introduced in the case of symplectic singularities and resolutions in~\cite{81resolutions, SymplCox}. Then, in~\cite{Yamagishi} it was noticed that the same is true, and the proofs work in the same way, in the case of any minimal model of a quotient singularity for $G \subset \SL(n,\bC)$, in particular a crepant resolution if it exists. We start from embedding the Cox ring in a bigger ring which is easier to understand.

\begin{proposition}\cite[Prop.~3.8]{81resolutions}\label{prop_structure_cox_ring} Let $\varphi\colon X\ra \bC^{n}/G$ be a crepant resolution for a finite group $G \subset \SL(n,\bC)$. By~\cite[Lem.~2.11]{81resolutions}, $\Cl(X)$ is a free abelian group, so assume $\Cl(X) \simeq \bZ^m$. Then the push-forward of sections induces an embedding
  $$\ovl{\Theta} \colon \cR(X)\subset \cR(\bC^{n}/G)\otimes_{\bC}\bC[\Cl(X)] \simeq \bC[x_{1},\ldots,x_n]^{[G,G]}[t_{1}^{\pm 1},\ldots,t_{m}^{\pm 1}].$$
\end{proposition}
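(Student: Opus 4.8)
The plan is to construct $\ovl{\Theta}$ explicitly on homogeneous components and check by hand that it is an injective ring homomorphism into the asserted ring. First I would fix the target. Since $G\subset\SL(n,\bC)$ contains no quasi-reflections, one has $\Cl(\bC^n/G)\cong\Ab(G)=G/[G,G]$ and the Cox ring of the quotient is the ring of commutator invariants, $\cR(\bC^n/G)\cong\bC[x_1,\ldots,x_n]^{[G,G]}$, the $\Cl(\bC^n/G)$-grading being the decomposition of $\bC[x]^{[G,G]}$ into $\Ab(G)$-semi-invariants (this is standard, cf.~\cite{CoxRings} and~\cite{81resolutions}). As $\Cl(X)\cong\bZ^m$ is free, its group algebra is $\bC[\Cl(X)]\cong\bC[t_1^{\pm1},\ldots,t_m^{\pm1}]$, which yields the isomorphism $\cR(\bC^n/G)\otimes_\bC\bC[\Cl(X)]\cong\bC[x]^{[G,G]}[t_1^{\pm1},\ldots,t_m^{\pm1}]$ claimed on the right.

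Next I would build the map degree by degree. Using that $\Cl(X)$ is free, fix a group-theoretic section $\Cl(X)\to\operatorname{WDiv}(X)$, so that the Cox ring is genuinely $\Cl(X)$-graded and each class has a chosen representative $D$; write $t^{[D]}=\prod_i t_i^{c_i}$ for $[D]=\sum_i c_i[D_i]$ on a fixed basis. Because $\bC^n$ has trivial class group while $\pi^{*}\varphi_{*}D$ is a $G$-invariant divisor on $\bC^n$ (here $\pi\colon\bC^n\to\bC^n/G$), there is a rational semi-invariant $q_D\in\bC(x)$, unique up to scalar and chosen multiplicatively so that $q_{D+D'}=q_Dq_{D'}$, with $\div_{\bC^n}(q_D)=\pi^{*}\varphi_{*}D$; its character is the image of $[D]$ under $\varphi_{*}\colon\Cl(X)\to\Ab(G)$. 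Since $\varphi$ is birational, a homogeneous section $f\in H^0(X,\cO_X(D))$ is a $G$-invariant rational function with $\div_X f+D\ge0$; pushing forward gives $\div_{\bC^n}(f q_D)=\pi^{*}(\div f+\varphi_{*}D)\ge0$, so $f q_D$ is a regular semi-invariant, i.e.\ $f q_D\in\bC[x]^{[G,G]}$. I then set $\ovl{\Theta}(f)=(f q_D)\,t^{[D]}$ and extend additively.

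I would then verify that $\ovl{\Theta}$ is a ring homomorphism: on a product it multiplies the semi-invariants and adds the $t$-exponents, and the choice $q_{D+D'}=q_Dq_{D'}$ makes this match the grading of the target. Injectivity is where the Laurent variables are indispensable. By Theorem~\ref{theorem_mckay} the exceptional divisors are indexed by the age-$1$ classes, and (as is standard for a resolution) they span the kernel of $\varphi_{*}$; hence two classes of $\Cl(X)$ with the same image in $\Ab(G)$ differ by exceptional divisors and cannot be separated by the semi-invariant $f q_D$ alone, but the monomial $t^{[D]}$ records the full class. Since $f q_D\neq0$ whenever $f\neq0$ and distinct homogeneous pieces land in distinct $t$-degrees, $\ovl{\Theta}$ is injective. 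It is only an embedding, not onto, because the image is cut out by the further inequalities $\nu_{E_j}(f)+a_j\ge0$ (with $a_j$ the coefficient of $E_j$ in $D$) coming from the exceptional valuations $\nu_{E_j}=\frac1{r_j}\nu_{g_j}$ of Theorem~\ref{theorem_mckay}, which the ambient ring does not see; this is precisely what later makes $\cR(X)$ computable.

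The main obstacle is the bookkeeping that forces $\ovl{\Theta}$ to be simultaneously well defined, multiplicative, and valued in honest polynomials: one must reconcile the $\Cl(X)$-grading of the source with the $\big(\bC[x]^{[G,G]}\text{-degree}\big)\times\big(t\text{-degree}\big)$ structure of the target, given that $\varphi_{*}\colon\Cl(X)\to\Ab(G)$ has a nontrivial kernel. The two delicate points are choosing the $q_D$ compatibly, so that multiplicativity is independent of the divisor representatives (which is exactly where freeness of $\Cl(X)$ from~\cite{81resolutions} is used), and checking that clearing a pushed-forward section by $q_D$ always produces a regular rather than merely rational function. The latter is the identity $\div_{\bC^n}(f q_D)=\pi^{*}(\div f+\varphi_{*}D)\ge0$, which rests on $\pi^{*}$ preserving and reflecting effectivity of $G$-invariant divisors.
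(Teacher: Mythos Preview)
The paper does not supply its own proof of this proposition: it is quoted verbatim from \cite[Prop.~3.8]{81resolutions}, and the only additional comment in the text is the identification $\cR(\bC^n/G)\simeq\bC[x_1,\ldots,x_n]^{[G,G]}$ via \cite{AG_finite} and \cite{Benson}. So there is no in-paper argument to compare against; one can only judge your sketch on its own merits and against the construction in the cited source.

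Your construction is correct and is essentially the standard one. Choosing a homomorphic section $\Cl(X)\to\operatorname{WDiv}(X)$ (possible precisely because $\Cl(X)$ is free), trivialising $\pi^{*}\varphi_{*}D$ on $\bC^n$ by a semi-invariant $q_D$, and sending $f\mapsto (fq_D)\,t^{[D]}$ is exactly how the embedding in \cite{81resolutions} is built; multiplicativity and injectivity follow for the reasons you state. Two minor points of bookkeeping: the natural identification is $\Cl(\bC^n/G)\cong\Ab(G)^{\vee}=\Hom(G,\bC^{*})$ rather than $\Ab(G)$ itself (the paper records this explicitly after the statement), which is why it makes sense to say ``the character of $q_D$ is the image of $[D]$ under $\varphi_{*}$''; and in the effectivity check you are implicitly using $\varphi_{*}(\div_X f)=\div_{\bC^n/G}(f)$ for a proper birational $\varphi$, which is worth stating once. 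Your closing remark that the image is cut out inside the ambient Laurent ring by the exceptional-valuation inequalities is exactly what the paper exploits next in Theorem~\ref{theorem_valuation_lifting}.
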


The isomorphism of graded rings $\cR(\bC^{n}/G) \simeq \bC[x_1,\ldots,x_n]^{[G,G]}$ follows by~\cite[Thm~3.1]{AG_finite}; it relies on an observation that $\Cl(\bC^{n}/G) \simeq \Ab(G)^{\vee}$ (see~\cite[Prop.~3.9.3]{Benson}), i.e. grading groups are the same.

We look for generators of the embedded Cox ring. Let $g_1,\ldots, g_m$ be representatives of all junior conjugacy classes in~$G$, and $\nu_{1},\ldots,\nu_{m}$ corresponding monomial valuations. The general procedure is as follows.
\begin{enumerate}
\item Find a minimal generating set $\{\phi_{1},\ldots, \phi_{s}\}$ of $\bC[x_1,\ldots,x_n]^{[G,G]}$, consisting of elements homogeneous with respect to the induced $\Ab(G)$-action.
\item For each $\phi_{i}$ take $\phi_i\cdot t_1^{\nu_1(\phi_i)}\cdots t_m^{\nu_m(\phi_i)}$, and for each variable $t_j$ take $t_j^{-ord(g_j)}$ -- these elements make the candidate for a generating set.
\item Check the condition in Theorem~\ref{theorem_valuation_lifting} for this set. If it is not satisfied then look for additional generators (or for a modification of existing ones which increases valuations) and repeat this step.
\end{enumerate}

Note that in this procedure we choose coordinates $t_1,\ldots, t_m$ in a slightly different way than in~\cite[Sect.~3]{SymplCox}: we let $t_j^{-ord(g_j)}$ correspond to the exceptional divisor~$E_j$.

To formulate the theorem, let $\kappa: \bC[Z_{1},\ldots,Z_{s}]\to \bC[x_1,\ldots,x_n]^{[G,G]}$ be a surjective homomorphism given by sending $Z_{i}$ to $\phi_{i}$. Define monomial valuations $\wt{\nu}_{j}$ on $\bC(Z_{1},\ldots,Z_{s})$ by setting $\wt{\nu}_{j}(Z_i) = \nu_{j}(\phi_{i})$.

\begin{theorem}[{\cite[Thm~3.9]{SymplCox},~\cite[Prop.~4.4]{Yamagishi}}]\label{theorem_valuation_lifting}
The embedded Cox ring $\ovl{\Theta}(\cR(X))$ is generated by $\phi_i\cdot t_1^{\nu_1(\phi_i)}\cdots t_m^{\nu_m(\phi_i)}$ for $i = 1,\ldots,s$ and $t_i^{-ord(g_i)}$ for $i=1,\ldots,m$
if the following \textbf{valuation lifting condition} is satisfied:
for every $f\in \bC[x_1,\ldots,x_n]^{[G,G]}$ homogeneous with respect to the $\Ab(G)$-action there is $\wt{f}\in \kappa^{-1}(f)$ such that $\wt{\nu}_{i}(\wt{f})\ge \nu_i(f)$ for all $i =1,\ldots,m$.
\end{theorem}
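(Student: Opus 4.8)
Throughout write $r_i=\operatorname{ord}(g_i)$ and abbreviate $t^{c}=t_1^{c_1}\cdots t_m^{c_m}$ for $c\in\bZ^m$. Let $A\subseteq\ovl{\Theta}(\cR(X))$ be the $\bC$-subalgebra generated by the proposed elements; the plan is to prove $A=\ovl{\Theta}(\cR(X))$. The inclusion $A\subseteq\ovl{\Theta}(\cR(X))$ is the routine half: $t_i^{-r_i}$ is the image of the canonical section of $\cO_X(E_i)$, while each $\phi_i\cdot t^{\nu(\phi_i)}$ (with $\nu(\phi_i)=(\nu_1(\phi_i),\dots,\nu_m(\phi_i))$) has order $\tfrac1{r_j}\nu_j(\phi_i)\ge0$ along every $E_j$ by Theorem~\ref{theorem_mckay}, hence lies in the Cox ring. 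So I would first record, from Proposition~\ref{prop_structure_cox_ring} together with the McKay valuation formula, the precise shape of the embedded ring: a homogeneous element of $\ovl{\Theta}(\cR(X))$ is $f\cdot t^{b}$ with $f\in\bC[x_1,\dots,x_n]^{[G,G]}$ an $\Ab(G)$-homogeneous semi-invariant, the condition $\div_Xf+D\ge0$ along each $E_j$ being encoded as the inequality $b_j\le\nu_j(f)$.

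For the reverse inclusion fix such a homogeneous $h=f\cdot t^{b}$. This is exactly where the valuation lifting condition enters: it furnishes an $\Ab(G)$-homogeneous lift $\wt f\in\kappa^{-1}(f)$ with $\wt\nu_j(\wt f)\ge\nu_j(f)$ for all $j$. Expanding $\wt f=\sum_\beta c_\beta Z^\beta$ into monomials, each $Z^\beta$ with $c_\beta\neq0$ satisfies
\[
\wt\nu_j(Z^\beta)\ \ge\ \wt\nu_j(\wt f)\ \ge\ \nu_j(f)\ \ge\ b_j ,
\]
because $\wt\nu_j$ is a monomial, hence minimum-type, valuation. The corresponding product of generators
\[
P_\beta:=\prod_{i=1}^{s}\bigl(\phi_i\, t^{\nu(\phi_i)}\bigr)^{\beta_i}=\kappa(Z^\beta)\cdot t^{\wt\nu(Z^\beta)}\in A
\]
then carries the correct semi-invariant $\kappa(Z^\beta)$, of the same $\Ab(G)$-character as $f$, but a $t$-exponent that is too large by $\delta_j:=\wt\nu_j(Z^\beta)-b_j\ge0$.

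The crux is to cancel this excess with the exceptional generators $t_j^{-r_j}$, each lowering the $j$-th $t$-exponent by $r_j$, which is possible exactly when $r_j\mid\delta_j$. I would deduce this divisibility from the grading: since $g_j$ acts on a monomial $x^\alpha$ by $\zeta_{r_j}^{\nu_j(x^\alpha)}$, the residue $\nu_j(\cdot)\bmod r_j$ of any semi-invariant depends only on its $\Ab(G)$-character; applied to the common character of $f$ and $\kappa(Z^\beta)$ this gives $\wt\nu_j(Z^\beta)\equiv\nu_j(f)\pmod{r_j}$, while membership of $h$ in the Cox ring forces $b_j\equiv\nu_j(f)\pmod{r_j}$ as well (the underlying $G$-invariant section has integral order $\tfrac1{r_j}\nu_j(\cdot)$ along $E_j$, and $f$ differs from it by a semi-invariant of character $\chi$). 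Hence $r_j\mid\delta_j$; writing $\delta_j=r_jk_j$ with $k_j\ge0$ gives
\[
P_\beta\cdot\prod_{j=1}^{m}\bigl(t_j^{-r_j}\bigr)^{k_j}=\kappa(Z^\beta)\cdot t^{b}\in A ,
\]
and summing over $\beta$ yields $h=\sum_\beta c_\beta\,\kappa(Z^\beta)\,t^{b}\in A$.

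I expect the real obstacles to lie in the two structural inputs rather than in this manipulation. The first is pinning down the exact description of $\ovl{\Theta}(\cR(X))$ and the clean translation, via Theorem~\ref{theorem_mckay}, of ``$\div_Xf+D\ge0$ along $E_j$'' into ``$b_j\le\nu_j(f)$''; this is the step that genuinely ties the monomial valuations $\nu_j$ on $\bC[x_1,\dots,x_n]^{[G,G]}$ to vanishing along the exceptional divisors, and it is the reason for the normalisation $t_j^{-r_j}\leftrightarrow E_j$. The second is the divisibility $r_j\mid\delta_j$, which must be argued from the grading rather than assumed, since $\Cl(X)$ need not split as $\bigoplus_j\bZ/r_j$. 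Once both are in place, the valuation lifting condition is used precisely once, to replace $f$ by a polynomial in the generators all of whose monomials already have large enough valuations.
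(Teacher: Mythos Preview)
The paper does not give its own proof of this theorem: it is quoted from \cite[Thm~3.9]{SymplCox} and \cite[Prop.~4.4]{Yamagishi} and used throughout as a black box. So there is no in-paper argument to compare against.

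Your outline is the standard one and is correct. The reduction---take a homogeneous $f\cdot t^{b}$, lift $f$ via the hypothesis to $\wt f=\sum_\beta c_\beta Z^\beta$ with every $\wt\nu_j(Z^\beta)\ge b_j$, then strip off the excess $t$-powers using the generators $t_j^{-r_j}$---is exactly how the cited proofs proceed. You also correctly isolate the two structural inputs that carry all the content: (i) the description of $\ovl\Theta(\cR(X))$ as those $f\cdot t^b$ with $b_j\le\nu_j(f)$ and with the $\Ab(G)$-character of $f$ compatible with $b$, and (ii) the divisibility $r_j\mid(\wt\nu_j(Z^\beta)-b_j)$. Both are established in the cited references by unpacking the construction of $\ovl\Theta$ and are not reproduced in the present paper.

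Two minor remarks on your sketch. First, the statement of the valuation lifting condition does not require $\wt f$ to be $\Ab(G)$-homogeneous, but you may always replace $\wt f$ by its homogeneous component of the correct character without decreasing any $\wt\nu_j$, since $\kappa$ is $\Ab(G)$-equivariant and $\wt\nu_j$ is a minimum over monomials. Second, the assertion ``$g_j$ acts on a monomial $x^\alpha$ by $\zeta_{r_j}^{\nu_j(x^\alpha)}$'' is only literally true in coordinates diagonalising $g_j$; what you actually need, and what does hold, is that for any $\Ab(G)$-homogeneous $p\in\bC[x_1,\dots,x_n]^{[G,G]}$ the $g_j$-eigenvalue of $p$ is $\zeta_{r_j}^{\nu_j(p)}$ (seen by writing $p$ in diagonal coordinates for $g_j$, where all its monomials share the same residue $\nu_j\bmod r_j$). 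The companion congruence $b_j\equiv\nu_j(f)\pmod{r_j}$ is, as you say, not a consequence of the inequality alone but of the normalisation $t_j^{-r_j}\leftrightarrow E_j$ together with the compatibility of $b$ with the character of $f$ under $\Cl(X)\to\Cl(Y)=\Ab(G)^\vee$; this is precisely what is checked in \cite{SymplCox} and \cite{Yamagishi}.
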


We use the valuation lifting condition to determine Cox rings of crepant resolutions of $\bC^{3}/G$ for non-abelian reducible representations $G\subset \SL(3,\bC)$ in section~\ref{section_reducible}, and in section~\ref{section_irreducible} for several examples of irreducible representations. In the reducible case we prove directly that the condition holds for minimal generating set of the ring of invariants of $[G,G]$, using the fact that the ideal of relations $\ker \kappa$ is generated by a single trinomial relation. For irreducible representations we develop a different approach, because the minimal generating set does not usually give a generating set of $\cR(X)$ -- it needs at least linear modifications to increase valuations, but sometimes also adding more generators. We verify that obtained elements generate $\cR(X)$ by applying procedures in the library developed for~\cite{quotsingcox} or by the algorithm from~\cite{Yamagishi}.

\subsection{Cone of movable divisors and its chamber decomposition}
\label{section_methods_Mov}
Assume that a variety~$X$ satisfies conditions listed at the beginning of section~\ref{section_methods_Cox}. Here we explain how to compute the decomposition of the movable cone $\Mov(X)$ of~$X$ from the ideal of relations of its finitely generated Cox ring $\cR(X)$ and the matrix of the Picard torus $\bT_X$ action on $\Spec \cR(X)$.  We apply this to determine the number of crepant resolutions of investigated 3-dimensional quotient singularities, the relations, i.e. sequences of flops, between these resolutions and the structure of their exceptional sets (in chosen cases). We rely on the fact that crepant resolutions of $\bC^3/G$ are in one-to-one correspondence with Mori chambers in their common cone of movable divisors. This is because all projective crepant resolutions are connected by a sequence of flops (as minimal models of the singularity), and~\cite[Thm~2.4]{KollarFlops} implies that in dimension~3 flops preserve smoothness.

We start from a fixed generating set $\{\phi_1,\ldots,\phi_s\}$ of the Cox ring $\cR(X)$. Consider the ring homomorphism $\bC[Z_{1},\ldots,Z_{s}] \ra \cR(X)$ sending $Z_i$ to $\phi_i$ and the associated embedding $\Spec \cR (X) \hookrightarrow \bC^s$. The Picard torus $\bT_X$ of rank~$m$ acts on $\Spec \cR(X)$ as a subtorus of the big torus $(\bC^*)^s$ of $\bC^s$. The matrix $U$ of the weights of this action defines a lattice homomorphism $\bZ^s \ra \bZ^{m} \simeq \Cl(X)$ between character lattices of $(\bC^*)^s$ and $\bT_X$. We will apply the induced map of vector spaces $U \colon \bR^s \ra \bR^m$ to rational polyhedral cones. By $\gamma$ we denote the positive orthant of $\bR^s$. Then there is a simple description of the cone of movable divisors.

\begin{proposition}{\cite[Prop.~3.3.2.9]{CoxRings}}\label{prop_mov_intersection}
The cone $\Mov(X)$ is the intersection of the images of all facets of $\gamma$ under $U$.
\end{proposition}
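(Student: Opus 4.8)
The plan is to prove the equality by the two inclusions, after translating the membership condition $w \in \Mov(X)$ into a combinatorial statement about the weight vectors $w_i := U(e_i) = \deg \phi_i$ and matching it with the facet cones. Observe first that the facets of the orthant are $F_i = \gamma \cap \{x_i = 0\}$, so that $U(F_i) = \operatorname{cone}(w_j : j \neq i)$ and the right-hand side is $\bigcap_{i=1}^{s} \operatorname{cone}(w_j : j \neq i)$; this is the cone I want to identify with $\Mov(X)$.

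Before starting I would fix the dictionary between the algebra of $\cR(X)$ and the geometry of $X$. Since $\cR(X)$ is finitely generated and a $\Cl(X)$-graded unique factorization domain (as a Cox ring), I may assume the generators $\phi_1,\dots,\phi_s$ are pairwise non-associated $\Cl(X)$-prime elements; each then cuts out a single prime divisor $D_i \subset X$ with $[D_i] = w_i$, and $\operatorname{ord}_{D_i}(\phi^a) = a_i$ for every monomial $\phi^a = \prod_j \phi_j^{a_j}$. Because $\phi_1,\dots,\phi_s$ generate, each graded piece $\cR(X)_w = H^0(X, \cO_X(D_w))$ is spanned by the monomials $\phi^a$ with $U(a) = w$, $a \in \bZ_{\geq 0}^s$; in particular $w$ is effective iff $w \in U(\gamma) = \operatorname{cone}(w_1,\dots,w_s)$, recovering $\Eff(X) = U(\gamma)$.

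The heart of the argument is a base-locus computation. Since every section in $\cR(X)_{kw}$ is a combination of monomials $\phi^a$, and each such monomial has zero divisor supported on $D_1 \cup \dots \cup D_s$, the base locus $\Bs|kD_w|$ is contained in $\bigcup_i D_i$; hence the only prime divisors that can occur in the stable base locus $\mathbf B(w) = \bigcap_k \Bs|kD_w|$ are the $D_i$. Moreover $D_i \subseteq \Bs|kD_w|$ exactly when every monomial of degree $kw$ is divisible by $\phi_i$, i.e. when $kw \notin \operatorname{cone}(w_j : j \neq i)$; conversely a monomial with $a_i = 0$ provides a section not vanishing on $D_i$. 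Passing to multiples and to the rational cone, I get $D_i \not\subseteq \mathbf B(w)$ iff $w \in \operatorname{cone}(w_j : j \neq i)$. As $w$ is movable precisely when $\mathbf B(w)$ has no divisorial component, i.e. when no $D_i$ lies in $\mathbf B(w)$, this yields
\[
w \in \Mov(X) \iff w \in \operatorname{cone}(w_j : j \neq i)\ \text{for all } i \iff w \in \bigcap_{i=1}^{s} U(F_i),
\]
and taking closures of the cones generated by such classes finishes the proof.

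The main obstacle I expect is not the combinatorics, which is elementary once set up, but establishing the geometric dictionary cleanly: that the $\phi_i$ may be taken $\Cl(X)$-prime so that each defines one prime divisor $D_i$ with $\operatorname{ord}_{D_i}(\phi^a) = a_i$, and that divisibility of a homogeneous element by $\phi_i$ genuinely computes the order of vanishing along $D_i$. This rests on the graded factoriality of the Cox ring and on the identification of $\cR(X)_w$ with global sections of $\cO_X(D_w)$; without primeness of the generators the facet cones $U(F_i)$ need not correspond to individual prime divisors and the stated equality can fail, so this hypothesis, implicit in the bunched-ring setting of the cited reference, is exactly what makes the clean statement true.
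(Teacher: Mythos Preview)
The paper does not give its own proof of this proposition: it is quoted verbatim as \cite[Prop.~3.3.2.9]{CoxRings} and used as a black box. So there is nothing in the paper to compare your argument against.

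Your argument is essentially the standard one from the cited reference. The translation $U(F_i)=\operatorname{cone}(w_j:j\neq i)$, the observation that every homogeneous section is a polynomial in the $\phi_j$ so that the only possible fixed divisorial components of $|kD_w|$ are the $D_i$, and the equivalence ``$D_i\not\subset\mathbf B(w)\iff w\in\operatorname{cone}(w_j:j\neq i)$'' are exactly how the proof in \cite{CoxRings} runs. Your final paragraph is on point: the clean equality does require the $\phi_i$ to be pairwise non-associated $\Cl(X)$-primes (this is part of the bunched-ring setup in \cite{CoxRings}); for an arbitrary generating set the right-hand side can strictly contain $\Mov(X)$. The paper suppresses this hypothesis when quoting the statement, but in every application later on the chosen generators are of this type, so no harm is done.
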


To compute its chamber subdivision, we use the terminology introduced in~\cite{BerchtoldHausenGIT} and, in the algorithmic setting, in~\cite{KeicherGITFan} (see also~\cite[Sect.~4]{81resolutions} for an explanation in the context of resolutions of quotient singularities). Their approach is based on using the orbit decomposition of $\bC^s$ under its big torus action. It commutes with the action of $\bT_X$, hence certain properties (in particular semistability) depend only on the choice of the orbit of $(\bC^*)^s$.

\begin{remark}
Recall that there is a one-to-one correspondence between faces of~$\gamma$ and orbits of the action of $(\bC^*)^s$ on $\bC^s$: a face spanned by vectors $e_{i_1},\ldots,e_{i_k}$ of the standard basis gives an orbit of points where precisely the coordinates $i_1,\ldots,i_k$ are non-zero.
\end{remark}

Let~$I$ be the ideal of $\Spec \cR(X) \subset \bC^s$.

\begin{definition}\label{def_aface}
An \textbf{$I$-face} of $\gamma$ is a face such that the corresponding $(\bC^*)^s$-orbit of $\bC^s$ intersects $\Spec \cR(X)$. A \textbf{projected face} (or an \textbf{orbit cone}) is the image of a face of $\gamma$ under~$U$.
\end{definition}

\begin{proposition}\cite[Prop.~2.9]{BerchtoldHausenGIT}, see also~\cite[Prop.~3.1.1.12]{CoxRings}.
The subdivision of $\Mov(X)$ into GIT chambers is given by intersections of all projected $I$-faces. More precisely, the chamber containing $w\in \Mov(X)$ in its interior is the intersection of all projected $I$-faces containing~$w$ in its interior.
\end{proposition}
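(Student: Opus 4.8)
The plan is to read the chamber structure off the orbit cones via the standard GIT criterion for torus actions. Write $V = \Spec\cR(X) \subseteq \bC^s$ with the $\bT_X$-action given by the weight matrix $U$, and for $x \in V$ let $S(x)$ be its support (the set of nonzero coordinates), so that the $(\bC^*)^s$-orbit of $x$ is the one attached to the face $\gamma_{S(x)}$ of $\gamma$ and its orbit cone is $\omega(x) = U(\gamma_{S(x)})$, a projected face. The key first step is the semistability criterion: for a weight $w\in \Cl(X)_\bR$, a point $x\in V$ is $\bT_X$-semistable for the linearization by $w$ if and only if $w\in\omega(x)$. I would prove this directly from the definition: $x$ is semistable exactly when some semi-invariant $f\in\cR(X)$ of weight $nw$ (for some $n>0$) has $f(x)\neq 0$. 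Pulling $f$ back to $\bC[Z_1,\ldots,Z_s]$ and expanding it in monomials $Z^a$ — each of $\bT_X$-weight $U(a)$, and each nonvanishing at $x$ iff $\supp(a)\subseteq S(x)$ — shows that such an $f$ exists iff $nw = U(a)$ for some $n>0$ and some $a$ supported on $S(x)$, i.e. iff $w\in\omega(x)$.

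Next I would package this combinatorially. By Definition~\ref{def_aface} a face $\gamma_S$ is an $I$-face precisely when some $x\in V$ has support $S$, so the orbit cones that actually occur for points of $V$ are exactly the projected $I$-faces; write $\Omega$ for this finite collection. The criterion then gives $V^{ss}(w) = \bigcup\{(\bC^*)^s\cdot x \cap V : w\in\omega(x)\}$, which depends on $w$ only through the subcollection $\Omega_w := \{\tau\in\Omega : w\in\tau\}$. Hence two weights with $\Omega_w=\Omega_{w'}$ have equal semistable loci, define the same GIT quotient, and therefore (since crepant resolutions correspond to these quotients) lie in the same Mori chamber.

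It remains to identify the equivalence classes of the relation $\Omega_w=\Omega_{w'}$ with the relative interiors of the cones $\bigcap_{\tau\in\Omega_w}\tau$, and to show that these cones assemble into a fan; this is the combinatorial heart, and the step where I expect the main difficulty. The finitely many rational polyhedral cones in $\Omega$, together with all their faces, cut $\Cl(X)_\bR$ into finitely many relatively open cells, on each of which $\Omega_w$ is constant, and the closure of a cell equals $\bigcap\{\tau\in\Omega:w\in\tau\}$. For $w$ lying in the relative interior of its cell — equivalently, in the interior of its chamber — an orbit cone $\tau$ contains the whole chamber iff it contains $w$ in its relative interior, which yields the stated description: the chamber through $w$ is the intersection of all projected $I$-faces containing $w$ in their interior. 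Verifying that this collection is genuinely a fan (faces of GIT-cones are GIT-cones, and any two meet in a common face) is the technical crux, and I would follow the argument of~\cite{BerchtoldHausenGIT} to establish it.

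Finally I would restrict the entire picture to $\Mov(X)$. By Proposition~\ref{prop_mov_intersection} this cone is the intersection of the images under $U$ of all facets of $\gamma$, which is exactly the locus of weights whose quotients are the small $\bQ$-factorial modifications of $X$, i.e. where no divisor is contracted. Intersecting the GIT-fan with $\Mov(X)$ therefore produces precisely the decomposition into Mori chambers, each chamber being cut out by the displayed intersection of projected $I$-faces.
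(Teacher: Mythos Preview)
The paper does not supply its own proof of this proposition: it is quoted verbatim as a result from \cite{BerchtoldHausenGIT} (and \cite{CoxRings}), with no argument given. Your sketch is exactly the standard proof from those references --- the semistability criterion $x\in V^{ss}(w)\iff w\in\omega(x)$, the identification of the orbit cones occurring on $V$ with projected $I$-faces, and the resulting description of GIT-equivalence classes as intersections of orbit cones --- so there is nothing to compare beyond noting that you have correctly reconstructed the cited argument rather than anything the present paper adds.

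One small caution on wording: the usual formulation (and the one that makes the fan property cleanest) is that the GIT-cone of $w$ is $\bigcap_{w\in\tau}\tau$, the intersection over all orbit cones \emph{containing} $w$, not only those containing $w$ in their relative interior. For $w$ in the interior of a full-dimensional chamber the two descriptions coincide, which is what the paper's phrasing is getting at, but when you write up the fan property you will want the unrestricted intersection.
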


There is a tool for computing projected $I$-faces and the full GIT-fan (which we just need to restrict to $\Mov(X)$ for our purposes) for a given embedding of a variety: the library \texttt{gitfan.lib} written for Singular~\cite{Singular} by S.~Keicher, see~\cite{KeicherGITFan}.

To determine the structure of the central fibre (i.e. the fibre over $[0] \in \bC^3/G$) of a resolution we need to fix a linearisation $w$ from the interior of the corresponding GIT chamber and compute all $I$-faces stable with respect to~$w$. This, again, can be reduced to a purely combinatorial procedure. It is described in~\cite[Sect.~4]{81resolutions}, we perform the computations using a Singular package developed for that project.

\section{Dihedral groups}\label{section_dihedral}

Let $G = D_{2n}$, $n \ge 3$,  be a dihedral group of order $2n$, i.e. the group of isometries of the regular $n$-gon in plane. In terms of generators and relations $G$ can be presented as $\langle \rho, \eps \ | \ \rho^n,\ \eps^2,\ (\eps\rho)^2 \rangle$, and its elements are 1, rotations $\rho,\rho^2,\ldots, \rho^{n-1}$ and reflections $\eps, \eps\rho, \ldots, \eps\rho^{n-1}$. The structure of the sets of conjugacy classes of~$G$, which are very important for the properties of the resolution because of the McKay correspondence, differs depending on the parity of~$n$ -- this is why we describe these cases separately.

We consider the following 3-dimensional representation of~$G$:
\begin{equation}\label{gens_dihedral}
\rho \mapsto \begin{pmatrix}
\zeta & 0 & 0\\
0 & \zeta^{-1} & 0\\
0 & 0 & 1
\end{pmatrix}, \qquad 
\eps \mapsto \begin{pmatrix}
0 & 1 & 0\\
1 & 0 & 0\\
0 & 0 & -1
\end{pmatrix},
\end{equation}
where $\zeta$ denotes the primitive $n$-th root of unity. It is easy to check that the image of any other faithful representation of $G$ in $\SL(3,\bC)$ is the same subgroup up to conjugacy. By abuse of notation, from now on we will denote by $G$ the image of given representation and by $\rho,\varepsilon$ their images. Note that this is a representation without quasi-reflections (matrices with all but one eigenvalues~1).

\subsection{The odd case: $n = 2k+1$} The commutator subgroup consists of all rotations: $[G,G] = \langle \rho \rangle$. All reflections are conjugate and pairs of rotations are conjugate, so there are $k+2$ conjugacy classes:
$$\{1\},\:\{\rho,\rho^{-1}\},\ldots, \{\rho^{k},\rho^{-k}\},\: \{\eps,\eps\rho,\eps\rho^2,\ldots,\eps\rho^{2k}\}.$$

The set of points in $\bC^3$ with nontrivial isotropy group consists of the line $x_1=x_2=0$ fixed by $\langle \rho \rangle$ and $n$ lines, each fixed by a reflection, e.g. $x_1-x_2=x_3=0$ fixed by $\langle \eps \rangle$. In the quotient $\bC^3/G$ these lines are mapped to two components of the singular points set: the first one to a component $L_{\rho}$ with transversal $A_{2k}$ singularity and lines fixed by reflections to a component $L_{\eps}$ with transversal $A_1$ singularity (away from 0). The image of 0, as the intersection point of these components, has a worse singularity.

From the McKay correspondence we obtain that each nontrivial conjugacy class in~$G$ correspond to an exceptional divisor in the resolution which is mapped to a line of singular points in $\bC^3/G$. This is because there are no elements of age~2, which would give an exceptional divisor in the fibre over the origin. Thus we have $k$ exceptional divisors $E_1,\ldots, E_k$ mapped to $L_{\rho}$ and $E_{\eps}$ mapped to $L_{\eps}$.

\subsubsection{The Cox ring}
The Cox ring $\cR(\bC^3/G)$ is the ring of invariants $\bC[x,y,z]^{[G,G]}$ of the commutator subgroup $[G,G] = \langle \rho\rangle$. To find the Cox ring of a crepant resolution we also have to determine the eigenspaces of the action of the abelianization $\Ab(G) \simeq \bZ_2$, generated by the class of $\eps$, on $\cR(\bC^3/G)$.

\begin{lemma}
$\cR(\bC^3/G) = \bC[x,y,z]^{[G,G]}$  is generated by $x^{n}+y^{n}$, $x^{n} - y^{n}$, $xy$, $z$, where $x^{n}+y^{n}$ and $xy$ are also $\eps$-invariant, but on $x^{n}-y^{n}$ and $z$ the reflection $\eps$ acts by multiplying by~$-1$.
\end{lemma}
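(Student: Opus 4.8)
The plan is to compute the invariant ring $\bC[x,y,z]^{[G,G]}$ where $[G,G] = \langle \rho \rangle \simeq \bZ_n$ acts via the diagonal representation $\rho = \diag(\zeta, \zeta^{-1}, 1)$, and then to determine the action of the involution $\eps$ on a chosen generating set. Since $\rho$ acts diagonally, a monomial $x^a y^b z^c$ is $\rho$-invariant precisely when $\zeta^{a-b} = 1$, i.e. when $a \equiv b \pmod{n}$. First I would describe the semigroup of exponents $(a,b,c)$ satisfying $a \equiv b \pmod n$, $a,b,c \ge 0$: the variable $z$ is unconstrained, so I can factor it out and reduce to understanding invariant monomials in $x,y$ alone. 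Among these, the monomials with $a = b$ are generated by $xy$, while the monomials realizing the minimal nonzero shift $a - b = \pm n$ are $x^n$ and $y^n$. The key claim is that $xy$, $x^n$, $y^n$ (together with $z$) generate the full invariant ring; this is the main combinatorial content.

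To establish that these generate, I would argue at the level of the exponent semigroup $S = \{(a,b) \in \bZ_{\ge 0}^2 : a \equiv b \pmod n\}$ and show it is generated by $(1,1)$, $(n,0)$, $(0,n)$. Given $(a,b) \in S$ with, say, $a \ge b$, I can subtract $b$ copies of $(1,1)$ to reduce to $(a-b, 0)$ with $a - b \equiv 0 \pmod n$, hence a nonnegative multiple of $(n,0)$; the symmetric case uses $(0,n)$. This decomposition shows every invariant monomial is a product of $xy$, $x^n$, $y^n$, $z$, so these four elements generate $\bC[x,y,z]^{[G,G]}$ as a $\bC$-algebra. Replacing $x^n, y^n$ by the linear combinations $x^n + y^n$ and $x^n - y^n$ changes nothing about the generated algebra, and this is the basis on which the $\eps$-action is most transparent.

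For the second part, I would compute how $\eps$ acts on each generator using $\eps: x \mapsto y,\ y \mapsto x,\ z \mapsto -z$ read off from the matrix in~\eqref{gens_dihedral}. Then $xy \mapsto yx = xy$ is fixed; $x^n + y^n \mapsto y^n + x^n$ is fixed; $x^n - y^n \mapsto y^n - x^n = -(x^n - y^n)$ is negated; and $z \mapsto -z$ is negated. This directly yields the stated eigenvalue assignment, noting that since $\Ab(G) \simeq \bZ_2$ is generated by the class of $\eps$, the eigenvalue $\pm 1$ records the $\Ab(G)$-weight, matching the $\bZ_2$-grading used in the Cox ring construction.

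I expect the only genuine obstacle to be the minimality and correctness of the generating claim, i.e. verifying that no further generators are needed beyond $xy, x^n \pm y^n, z$; this reduces entirely to the semigroup decomposition argument above, which is elementary. One small point to keep in mind is that I should confirm $x^n$ and $y^n$ (equivalently $x^n \pm y^n$) are truly needed as generators and are not products of lower-degree invariants: since $xy$ has bidegree $(1,1)$ and any product of invariants of strictly smaller degree cannot produce the pure monomial $x^n$ with $b$-exponent zero, they are indeed irreducible in the semigroup and hence required. This completes the verification with no serious computational difficulty.
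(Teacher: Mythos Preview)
Your argument is correct. The paper does not give a proof of this lemma; it states the result without justification, treating it as elementary. Your approach---computing the invariants of the diagonal action of $\rho$ via the exponent semigroup $\{(a,b)\in\bZ_{\ge 0}^2 : a\equiv b \pmod n\}$, showing it is generated by $(1,1),(n,0),(0,n)$, adjoining the unconstrained variable~$z$, and then passing to the $\eps$-eigenbasis $x^n\pm y^n$---is exactly the standard one, and it matches the sketch the paper gives later in the proof of Proposition~\ref{propostion:dihedral-invariants} (``take a generating set of the invariants of the first (diagonal) generator~$\rho$ and modify it to obtain eigenvectors of the action of the second generator~$\eps$'').
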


Next, we need the values of monomial valuations corresponding to all conjugacy classes (of age~1) in $G$.

\begin{lemma}\label{table:valuationsOdd} The values of monomial valuations on given generators of $\cR(\bC^3/G)$ are as follows:
\begin{center}
\begin{tabular}{c|cccc}
val\textbackslash gen & $x^n+y^n$ & $x^n-y^n$ & $xy$ & $z$ \\
\hline
$\nu_{\rho^{i}}$ & $\frac{in}{\gcd(n,i)}$ & $\frac{in}{\gcd(n,i)}$ & $\frac{n}{\gcd(n,i)}$ & 0 \\
$\nu_{\eps}$ & 0 & 1 & 0 & 1
\end{tabular}
\end{center}
\end{lemma}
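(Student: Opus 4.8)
The plan is to read off each of the two rows as a direct evaluation of a monomial valuation, using the elementary fact that for a monomial valuation $\nu$ the value $\nu(f)$ of a polynomial $f$ is the minimum of the weights of its monomials, \emph{provided} the minimal-weight monomials do not cancel. In each case I would (i) pick coordinates in which the relevant group element is diagonal, (ii) read off the weight triple $(a_1,a_2,a_3)$, (iii) rewrite the four generators in those coordinates, and (iv) extract the minimal weight, checking that its coefficient is nonzero.

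For the row $\nu_{\rho^i}$ (with $i\in\{1,\dots,k\}$ indexing the junior class $\{\rho^i,\rho^{-i}\}$), the matrix $\rho^i=\diag(\zeta^i,\zeta^{-i},1)$ is already diagonal in the standard coordinates $x,y,z$. Writing $d=\gcd(n,i)$, $r=n/d=\operatorname{ord}(\rho^i)$ and $i'=i/d$, I would fix the primitive $r$-th root $\zeta_r=\zeta^{d}$, so that $\rho^i=\diag(\zeta_r^{i'},\zeta_r^{\,r-i'},\zeta_r^{0})$ and hence $x,y,z$ carry weights $i'$, $r-i'$, $0$. (The roles of $x$ and $y$ are swapped under the opposite sign convention for the induced action on functions, but this is harmless since all four generators are symmetric in $x,y$.) The one point needing an argument is that $\min(i',r-i')=i'$: this holds because $i\le k=(n-1)/2<n/2$ forces $i'<r/2$, and here is exactly where the odd-case bound $i\le k$ enters. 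Granting this, I read off $\nu_{\rho^i}(x^n\pm y^n)=\min(ni',n(r-i'))=ni'=ni/\gcd(n,i)$, then $\nu_{\rho^i}(xy)=i'+(r-i')=r=n/\gcd(n,i)$, and $\nu_{\rho^i}(z)=0$.

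For the row $\nu_\eps$, the element $\eps$ has order $2$ and eigenvalues $1,-1,-1$, with $+1$-eigenform $u=x+y$ and $-1$-eigenforms $v=x-y$, $w=z$; thus $u$ has weight $0$ and $v,w$ have weight $1$. Substituting $x=\tfrac12(u+v)$, $y=\tfrac12(u-v)$, $z=w$ and expanding by the binomial theorem gives $x^n+y^n=2^{1-n}\sum_{j\text{ even}}\binom{n}{j}u^{n-j}v^{j}$ and $x^n-y^n=2^{1-n}\sum_{j\text{ odd}}\binom{n}{j}u^{n-j}v^{j}$, while $xy=\tfrac14(u^2-v^2)$ and $z=w$. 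The minimal-weight monomials are $u^n$ (weight $0$) for $x^n+y^n$, the term $n\,u^{n-1}v$ (weight $1$) for $x^n-y^n$, $u^2$ (weight $0$) for $xy$, and $w$ (weight $1$) for $z$; since each has nonzero coefficient there is no cancellation, yielding the stated values $0,1,0,1$.

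The computations are routine; the only genuine steps are choosing $\zeta_r$ so that the weight triple for $\rho^i$ comes out as $(i',r-i',0)$ and justifying $\min(i',r-i')=i'$ from $i\le k$, together with the (immediate) check that no leading monomials cancel after the coordinate change for $\eps$. A useful sanity check throughout is that each weight triple sums to $r$ (respectively $2$), so every nontrivial class is junior, matching the earlier observation that $G$ has no elements of age~$2$.
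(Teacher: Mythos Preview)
Your proof is correct. The paper states this lemma without proof, treating it as a direct computation, so there is no argument to compare against; your write-up supplies exactly the routine verification the paper omits, including the one nontrivial point (that $i\le k$ forces $i'<r-i'$, so the minimum in $\nu_{\rho^i}(x^n\pm y^n)$ falls on the $x^n$-term) and the coordinate change needed to evaluate~$\nu_\eps$.
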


Let $\varphi:X\to \bC^3/G$ be a crepant resolution.

\begin{theorem}\label{theorem:Cox-odd-case}
The Cox ring $\cR(X)$ as a $\bC$-subalgebra of $\bC[x,y,z]^{[G,G]}[t_{\eps}^{\pm 1}, t_{i}^{\pm 1} \colon i \in \{1,\ldots,k\}]$ is generated by
\begin{multline*}
(x^{n}+y^{n})\prod_{i=1}^{k}t_{i}^{\frac{in}{\gcd(n,i)}}, \  (x^{n} - y^{n})t_{\eps}\prod_{i=1}^{k}t_{i}^{\frac{in}{\gcd(n,i)}}, \ xy\prod_{i=1}^{k}t_{i}^{\frac{n}{\gcd(n,i)}},\ zt_{\eps},\\ t_{\eps}^{-2},\ \{t_{i}^{-\frac{n}{\gcd(n,i)}} \colon i \in \{1,\ldots,k\}\}. 
\end{multline*} 
\end{theorem}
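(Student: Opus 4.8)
The plan is to apply the general machinery set up in Proposition~\ref{prop_structure_cox_ring} and Theorem~\ref{theorem_valuation_lifting} to the concrete data computed in the two preceding lemmas. The candidate generating set is produced mechanically: for each generator $\phi$ of $\bC[x,y,z]^{[G,G]}$ from the first lemma, I form $\phi\cdot t_\eps^{\nu_\eps(\phi)}\prod_i t_i^{\nu_{\rho^i}(\phi)}$ using the valuation values tabulated in Lemma~\ref{table:valuationsOdd}, and for each exceptional divisor I adjoin $t_j^{-\operatorname{ord}(g_j)}$. Here the junior classes are $\{\rho^i,\rho^{-i}\}$ for $i=1,\ldots,k$ (giving the $t_i$, with $\operatorname{ord}(\rho^i)=n/\gcd(n,i)$) and the reflection class (giving $t_\eps$, with $\operatorname{ord}(\eps)=2$). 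Substituting the table entries reproduces exactly the four monomial-weighted generators and the $k+1$ negative-power generators listed in the statement; in particular $z$ contributes $zt_\eps$ since $\nu_\eps(z)=1$ and $\nu_{\rho^i}(z)=0$, and $x^n-y^n$ picks up the single factor $t_\eps$ since $\nu_\eps(x^n-y^n)=1$. So the content of the theorem is entirely the verification that this candidate set actually generates $\ovl\Theta(\cR(X))$, i.e. that the valuation lifting condition of Theorem~\ref{theorem_valuation_lifting} holds.

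The main work is therefore checking the valuation lifting condition. Here I would exploit the simple structure of the invariant ring: by the first lemma, $\bC[x,y,z]^{[G,G]}$ is generated by $u:=x^n+y^n$, $v:=x^n-y^n$, $w:=xy$, and $z$, and the only relation among them is the single trinomial $u^2-v^2=4w^n$ (equivalently $4(xy)^n=(x^n+y^n)^2-(x^n-y^n)^2$). Thus with $\kappa\colon\bC[Z_u,Z_v,Z_w,Z_z]\to\bC[x,y,z]^{[G,G]}$ the presenting surjection, $\ker\kappa$ is generated by $Z_u^2-Z_v^2-4Z_w^n$. Given an arbitrary $\Ab(G)$-homogeneous $f$ in the invariant ring, I need a preimage $\wt f$ whose induced valuations $\wt\nu_i$ all meet or exceed $\nu_i(f)$. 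The strategy is: take any preimage, then modify it modulo the trinomial relation. Because the relation is homogeneous of the same $\Ab(G)$-degree and the two valuations are monomial, one checks that replacing $Z_u^2-Z_v^2$ by $4Z_w^n$ (or vice versa) never decreases any $\wt\nu_i$; concretely, from Lemma~\ref{table:valuationsOdd} one verifies that $\wt\nu_i$ and $\nu_i$ agree on $w^n$ and on the pair $u^2,v^2$, so the freedom to trade these terms lets one push each preimage to one of maximal valuation. The $\bZ_2$-grading by $\Ab(G)$ splits the problem into the $\eps$-invariant part (monomials in $u,w$ and even powers of $v,z$) and the anti-invariant part, and in each graded piece the lifting reduces to a finite combinatorial comparison of monomial valuations.

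I expect the genuine obstacle to be the bookkeeping for $\nu_{\rho^i}$ across all $i$ simultaneously: the lifting condition demands a \emph{single} preimage $\wt f$ that works for every valuation at once, and the $\gcd(n,i)$ factors make the valuations of $u,v,w$ interact differently for different $i$. The key observation that rescues this is that $\nu_{\rho^i}(u)=\nu_{\rho^i}(v)=in/\gcd(n,i)$ while $\nu_{\rho^i}(w)=n/\gcd(n,i)$, so $\nu_{\rho^i}(u^2)=\nu_{\rho^i}(v^2)=\nu_{\rho^i}(w^n)=2in/\gcd(n,i)$ — all three monomials in the relation carry identical $\rho^i$-valuation for every $i$. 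Hence trading terms along the relation is \emph{valuation-neutral} for all $\nu_{\rho^i}$ at once, and only the $\nu_\eps$-valuation (where $u^2,w^n$ have value $0$ but $v^2$ has value $2$) needs separate care; there, choosing the preimage supported on $u,w$ rather than $v$ raises the $\eps$-valuation without harming the others. This uniform behaviour reduces the multi-valuation lifting to essentially the same single-trinomial argument announced for the reducible case in section~\ref{section_reducible}, and once it is in place the theorem follows directly from Theorem~\ref{theorem_valuation_lifting}. A final routine check confirms that each listed generator indeed lies in $\ovl\Theta(\cR(X))$, i.e. has nonnegative divisorial valuations, so that no spurious elements have been introduced.
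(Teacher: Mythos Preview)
Your overall strategy coincides with the paper's, which simply invokes Theorem~\ref{theorem:cox-rings-reducible} (the general reducible case, of which the odd dihedral group is an instance). The concrete verification you sketch, however, rests on an arithmetic error. You claim $\nu_{\rho^i}(u^2)=\nu_{\rho^i}(v^2)=\nu_{\rho^i}(w^n)=2in/\gcd(n,i)$, but in fact $\nu_{\rho^i}(w^n)=n\cdot\nu_{\rho^i}(w)=n^2/\gcd(n,i)$, which is strictly larger since $n=2k+1>2i$ for every $i\le k$. So the three monomials of $R=Z_u^2-Z_v^2-4Z_w^n$ do \emph{not} carry equal $\wt\nu_{\rho^i}$-value, and trading along~$R$ is not valuation-neutral for these valuations. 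For instance $Z_u^2-Z_v^2\in\kappa^{-1}(4w^n)$ has $\wt\nu_{\rho^i}$-value $2in/\gcd(n,i)$, strictly below $\nu_{\rho^i}(4w^n)$; thus the $\rho^i$-valuations themselves require lifting, and your sketch provides no mechanism for this. (A smaller slip: to raise $\wt\nu_\eps$ one must trade \emph{towards} $Z_v$, since $\wt\nu_\eps(Z_v)=1$ while $\wt\nu_\eps(Z_u)=\wt\nu_\eps(Z_w)=0$; your stated direction is reversed.)

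The property that actually drives the argument, as in the proof of Theorem~\ref{theorem:cox-rings-reducible}, is weaker but sufficient: for each valuation $\wt\nu_j$, at least \emph{two} of the three monomials of $R$ achieve the minimum, so the leftover piece $R_1$ is a single monomial (or zero) and hence automatically satisfies $\wt\nu_i(R_1)\ge\wt\nu_i(R_0)$ for every other index~$i$. Combined with the fact that every $\Ab(H)$-homogeneous relation among the leading forms $p_{1,0},p_{2,0},p_{3,0}$ lies in the ideal generated by $R_0$ (the analogue of Lemma~\ref{lemma:relations-of-initials}, which your sketch does not address), this lets one write $F_0=PR_0$ and pass to $F'=F-PR$, strictly increasing $\wt\nu_j$ without decreasing any other $\wt\nu_i$.
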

\begin{proof}
It follows from a more general result, Theorem~\ref{theorem:cox-rings-reducible}, but one can also give a proof based on the characterisation of the Cox ring via geometric quotients~\cite[1.6.4.3]{CoxRings} (as in the surface case in~\cite{CoxSurf}), which requires in particular a direct check that quotients of $\Spec \cR(X)$ are smooth.
\end{proof}

We define the surjective map
\begin{equation}\label{map_pi_odd}
  \kappa:\bC[Z_{1},Z_{2},Z_{3},Z_{4}, T_{\eps}, T_{i} \colon i \in \{1,\ldots,k\}]\longrightarrow \cR(X)
\end{equation}
which sends each variable to the respective generator of $\cR(X)$.
\begin{corollary}\label{proposition:relations-odd-case}
The ideal $I_k = \ker \kappa$, corresponding to the embedding of $\Spec\cR(X)$ in~$\bC^{k+5}$, is generated by a single trinomial
\begin{equation}\label{equation_cox_dn_odd}
Z_{1}^{2} - Z_{2}^{2}T_{\eps} - 4Z_{3}^{n}\prod_{i=1}^{k}T_{i}^{n-2i}.
\end{equation}
\end{corollary}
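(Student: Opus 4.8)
The plan is to exploit the explicit generating set already furnished by Theorem~\ref{theorem:Cox-odd-case}. Since $\kappa$ is surjective by construction, proving the corollary amounts to showing that $\ker\kappa$ is exactly the principal ideal generated by the trinomial $f = Z_1^2 - Z_2^2 T_\epsilon - 4 Z_3^n \prod_{i=1}^k T_i^{n-2i}$. First I would check the inclusion $f \in \ker\kappa$ by direct substitution. The only substantive input is the elementary identity $(x^n+y^n)^2 - (x^n-y^n)^2 = 4(xy)^n$: applying $\kappa$, the factor $t_\epsilon^{2}$ introduced by $Z_2^2$ is cancelled by the $t_\epsilon^{-2}$ coming from $T_\epsilon$, and a short bookkeeping of the $t_i$-exponents shows that $4Z_3^n \prod_i T_i^{n-2i}$ carries exactly $\frac{2in}{\gcd(n,i)}$ in each $t_i$, matching the image of $Z_1^2$. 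Hence $\kappa$ descends to a surjection $\bar\kappa \colon R := \bC[Z_1,\ldots,T_k]/(f) \twoheadrightarrow \cR(X)$.

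The heart of the argument is then a dimension count that bypasses any elimination of the auxiliary $t$-variables. The ring $R$ is a hypersurface in $\bC^{k+5}$, so $\dim R = k+4$, and if $f$ is irreducible then $R$ is an integral domain. On the other side, $\cR(X)$ is a domain, being a subring of $\bC[x,y,z]^{[G,G]}[t_\epsilon^{\pm 1}, t_i^{\pm 1}]$, and its Krull dimension is $\dim X + \rank \Cl(X) = 3 + (k+1) = k+4$, where the rank $k+1$ counts the junior conjugacy classes (the $k$ rotation classes $\{\rho^{\pm i}\}$ together with the reflection class, all of age~$1$) via Proposition~\ref{prop_structure_cox_ring} and Theorem~\ref{theorem_mckay}. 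A surjection of finitely generated $\bC$-domains of equal Krull dimension is forced to be an isomorphism: its kernel is a prime $\mathfrak p \subset R$ with $\dim R/\mathfrak p = \dim R$, whence $\operatorname{ht}\mathfrak p = 0$ and so $\mathfrak p = 0$. Thus $\bar\kappa$ is injective and $\ker\kappa = (f)$.

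To see that $f$ is irreducible I would regard it as the monic quadratic $Z_1^2 - c$ in $Z_1$ over the UFD $\bC[Z_2, Z_3, Z_4, T_\epsilon, T_1, \ldots, T_k]$, with $c = Z_2^2 T_\epsilon + 4 Z_3^n \prod_i T_i^{n-2i}$. By Gauss's lemma such a polynomial fails to be irreducible only when $c$ is a perfect square; specialising $Z_3 \mapsto 0$ collapses $c$ to $Z_2^2 T_\epsilon$, which is not a square because the variable $T_\epsilon$ appears to the first power, so $c$ cannot be a square either.

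I expect the only genuine subtlety to be the assertion that a single relation exhausts the kernel, which a priori would demand eliminating the $t$-variables and computing a Gröbner basis. The dimension argument sidesteps this completely: once $R$ is known to be a domain of the correct dimension, equality of dimensions alone yields $\ker\kappa=(f)$. The remaining point requiring honest (if routine) justification is the value $\rank\Cl(X)=k+1$, which rests on the McKay correspondence identifying the junior classes with the exceptional divisors; everything else—the trinomial verification and the squarefreeness test—is straightforward.
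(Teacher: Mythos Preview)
Your argument is correct. The paper states this corollary without proof, treating it as an immediate consequence of Theorem~\ref{theorem:Cox-odd-case}; the implicit reasoning (spelled out in the remark following Theorem~\ref{theorem:cox-rings-reducible}) is that the unique relation $(x^n+y^n)^2-(x^n-y^n)^2-4(xy)^n=0$ among the invariants of $[G,G]$ need only be homogenised by the $T$-variables. Your route---verifying $f\in\ker\kappa$ directly, proving $f$ irreducible via the non-square specialisation $c|_{Z_3=0}=Z_2^2T_\eps$, and then invoking the equality of Krull dimensions $\dim R=k+4=\dim X+\rk\Cl(X)=\dim\cR(X)$ to force the surjection $R\twoheadrightarrow\cR(X)$ of affine domains to be an isomorphism---is a clean and self-contained way to supply the missing details. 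The dimension-comparison trick in particular avoids any elimination or Gr\"obner computation, which is a genuine gain over naively trying to verify that no further relations arise from the Laurent $t$-variables.
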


\subsubsection{The Mov cone and the set of all crepant resolutions}\label{dihedral_odd_mov}
The monomial valuations from Lemma~\ref{table:valuationsOdd} describe the action of the Picard torus on $\Spec \cR(X)$ -- the matrix of the action is:

\begin{equation*}
\left( \begin{array}{c|cc|ccccc|c}
\frac{n}{\gcd(n,1)} &   \frac{n}{\gcd(n,1)} & 0 & \frac{n}{\gcd(n,1)} & -\frac{n}{\gcd(n,1)} & 0 &  & 0 & 0\\
\frac{2n}{\gcd(n,2)} &   \frac{2n}{\gcd(n,2)} & 0 & \frac{n}{\gcd(n,2)} & 0 & -\frac{n}{\gcd(n,2)}&  & 0 & 0 \\
\vdots &   \vdots & \vdots & \vdots & \vdots &\vdots & \ddots & \vdots & \vdots \\
\frac{kn}{\gcd(n,k)} &   \frac{kn}{\gcd(n,k)}& 0 & \frac{n}{\gcd(n,k)} & 0 & 0 &  & -\frac{n}{\gcd(n,k)} & 0\\
0 & 1 & -2 & 0 & 0 & 0 &  & 0 & 1 \\
\end{array}
\right)
\end{equation*}

The first three groups of columns correspond to sets of variables in monomials in the trinomial relation defining $\Spec \cR (X)$ in Corollary~\ref{proposition:relations-odd-case}, and the last one corresponds to $Z_{4}$ not involved in the relation. 

We change the coordinates in the image $\bR^{k+1}$ by rescaling the $i$-th one by $\frac{\gcd(n,i)}{n}$ for $i \in \{1,\ldots,k\}$ -- this does not change the combinatorial properties of the $\Mov(X)$ and its chambers. Thus we obtain
\begin{equation*}
U_k := \left( \begin{array}{c|cc|ccccc|c}
1 & 1 & 0 & 1 & -1 & 0 &  & 0 & 0\\
2 & 2 & 0 & 1 & 0 & -1&  & 0 & 0 \\
\vdots &   \vdots & \vdots & \vdots & \vdots &\vdots & \ddots & \vdots & \vdots \\
k & k & 0 & 1 & 0 & 0 &  & -1 & 0\\
0 & 1 & -2 & 0 & 0 & 0 &  & 0 & 1 \\
\end{array}
\right)
\end{equation*}

We are ready to describe the cone $\Mov(X)$ in $N^{1}(X) = \Cl(X)\otimes_{\bZ}\bR\simeq \bR^{k+1}$. We denote the standard basis by $e_1,\ldots,e_{k+1}$. Note that in these coordinates the exceptional divisors are $E_i = -e_i$ for $i \in \{1,\ldots,k\}$ and $E_{\eps} = -2e_{k+1}$. We also use the following vectors:
\begin{align}\label{eq_important_vec}
  \begin{split}
  &q = e_{k+1} = (0,\ldots,0,1),\\
  &v_{i} = (1,2,\ldots,i-1, i,\ldots, i,0)\quad \hbox{for } i \in \{1,\ldots,k\},\\
    &w_{i} = (1,2,\ldots, i,\ldots, i,1) = v_{i}+q\quad  \hbox{ for } i \in \{1,\ldots,k\}.
  \end{split}
\end{align}

\begin{proposition}\label{proposition:mov-odd}
The movable cone $\Mov(X)$ is spanned by rays $q,v_1,\ldots,v_k$ and defined by inequalities
\begin{equation*}\label{eqn:mov-inequalities-odd}
\{(a_{1},\ldots,a_{k+1})\ :\ a_{k+1}\ge 0,\ 2a_{1} \ge a_{2},\ a_{k}\ge a_{k-1},\ 2a_{i}\ge a_{i-1}+a_{i+1},\ \ 1< i < k \}.
\end{equation*}
\end{proposition}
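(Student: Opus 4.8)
```latex
\textbf{Plan of the proof.}
The statement has two parts: (a) $\Mov(X)$ is the cone spanned by the rays $q, v_1, \ldots, v_k$, and (b) it is cut out by the listed inequalities. My approach rests entirely on Proposition~\ref{prop_mov_intersection}, which says $\Mov(X)$ is the intersection, over all facets $F$ of the positive orthant $\gamma \subset \bR^{k+5}$, of the images $U_k(F)$. Since $\gamma$ lives in $\bR^{k+5}$ with the column-ordering matching the groups in $U_k$ (two $Z$-variables $Z_1,Z_2$; the single $T_\eps$; the pair-block coming from $Z_3$ together with the $T_i$; and finally $Z_4$), each facet is obtained by dropping one standard basis vector, so there are $k+5$ facets to consider. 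The first concrete step is to list the columns of $U_k$ as explicit vectors in $\bR^{k+1}$: the two columns $(1,2,\ldots,k,0)^{\top}$ (these are exactly $v_k$) from $Z_1,Z_2$, the column $(0,\ldots,0,-2)^{\top} = -E_\eps$ from $T_\eps$, the column $(1,1,\ldots,1,0)^{\top}$ from $Z_3$, the columns $-e_i$ $(i=1,\ldots,k)$ from the $T_i$, and the column $(0,\ldots,0,1)^{\top} = q$ from $Z_4$.

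The second step is to compute each facet image $U_k(F)$ as the cone generated by all columns except one, and then intersect. The key simplification I expect is that most of these facet images are half-spaces or very large cones, so the intersection is governed by only a few of them --- specifically those that force the defining inequalities. The cleanest way to organize this is to compute $\Mov(X)$ directly as a convex cone by exhibiting both inclusions. For the inclusion $\Mov(X) \subseteq \{$inequalities$\}$, I would verify that every column vector of $U_k$ satisfies each listed inequality (e.g.\ $2a_i \ge a_{i-1}+a_{i+1}$ is linear, so it suffices to check it on the generators $v_k$, $q$, $(1,\ldots,1,0)$, the $-e_i$, and $-E_\eps$, but one must be careful that dropping a column can only \emph{enlarge} an individual facet image, so the relevant inequalities come from the facets that omit a generator violating them); more robustly, each inequality is realized as an intersection of enough facet images, so it holds on $\Mov(X)$. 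For the reverse inclusion, and to pin down the extremal rays, I would check that $q, v_1, \ldots, v_k$ each lie in every facet image $U_k(F)$, hence in the intersection $\Mov(X)$, and that they saturate the correct inequalities to be extremal.

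The third step identifies the extremal rays precisely. I claim $v_i$ lies on the boundary facet where $2a_i = a_{i-1} + a_{i+1}$ becomes an equality for all the active indices, together with $a_{k+1}=0$; direct substitution into $v_i = (1,2,\ldots,i,\ldots,i,0)$ gives $2a_j = a_{j-1}+a_{j+1}$ on the ``ramp'' portion $j < i$ and the ``flat'' portion forces $a_k = a_{k-1}$ or $2a_i = a_{i-1}+a_{i+1}$ to hold with equality at the breakpoints, so each $v_i$ saturates all but one of the facet inequalities and is therefore a ray. Similarly $q = e_{k+1}$ saturates all inequalities not involving $a_{k+1}$ and sits on $2a_1 = a_2 = 0$. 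Counting: there are $k+1$ listed inequalities ($a_{k+1}\ge 0$, $2a_1\ge a_2$, $a_k \ge a_{k-1}$, and $2a_i \ge a_{i-1}+a_{i+1}$ for $1<i<k$), matching the $k+1$ proposed rays in the simplicial cone, which is the consistency check I would use to confirm both descriptions agree.

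\textbf{Main obstacle.} The genuinely delicate point is the bookkeeping in the facet-intersection computation: Proposition~\ref{prop_mov_intersection} intersects images of facets, and the image of a facet is the cone on all-but-one column. Because removing a column typically \emph{drops} a generator, the individual facet images are large and it is easy to mis-identify which facet contributes which inequality. The careful work will be to show that the intersection of all $k+5$ facet images is exactly the cone on $q,v_1,\ldots,v_k$ --- in particular, verifying that no extra inequalities survive and that the ``ramp-then-flat'' shape of $v_i$ is forced. I expect the cleanest route is to prove the two inclusions (rays lie in every facet image; every point of the intersection satisfies the inequalities) rather than literally enumerating and intersecting all $k+5$ cones, with the inequality $2a_i \ge a_{i-1}+a_{i+1}$ being the one that requires the most care since it ties together consecutive coordinates.
```
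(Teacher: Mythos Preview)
Your overall strategy matches the paper's: first check that the cone on $q, v_1, \ldots, v_k$ coincides with the cone cut out by the listed inequalities, then use Proposition~\ref{prop_mov_intersection} to prove two inclusions --- show each proposed ray lies in every facet image (giving $C_k \subseteq \Mov(X)$), and for each inequality exhibit one facet whose image already lies in that half-space (giving $\Mov(X) \subseteq C_k$). The paper does exactly this and explicitly says it skips the combinatorial details.

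There is, however, a concrete error in your column listing that would derail the actual computation. The column of $U_k$ corresponding to $Z_2$ is $(1,2,\ldots,k,1)^\top = w_k$, not $v_k$; only the $Z_1$ column equals $v_k$. (Look at the last row of $U_k$: the $Z_2$ entry is $1$, coming from $\nu_\eps(x^n-y^n) = 1$.) This matters. For example, when you drop the $Z_4$ column (which is $q$ itself) and must show $q$ still lies in that facet image, the needed expression is $q = w_k + \sum_{i=1}^k i\,(-e_i)$ as a nonnegative combination of the remaining columns. If both $Z_1$ and $Z_2$ gave $v_k$ as you wrote, every remaining column would have nonpositive last coordinate and $q$ could not be reached --- you would wrongly conclude $q \notin \Mov(X)$.

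A smaller point: your first formulation of the inclusion $\Mov(X) \subseteq \{\text{inequalities}\}$ --- checking that every column of $U_k$ satisfies each inequality --- is not correct as stated (e.g.\ $-e_1$ violates $2a_1 \ge a_2$). Your ``more robust'' reformulation is the right one and is precisely what the paper does: for each inequality, find a single facet whose image lies in the corresponding half-space.
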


\begin{proof}
First, it is easy to check that the cones defined by given rays and by given inequalities are equal: obviously all rays satisfy inequalities, and one can construct inductively a positive combination of rays equal to a vector satisfying inequalities. We denote this cone by $C_k$.

Then by Proposition~\ref{prop_mov_intersection} we have to show that $C_k$ is equal to $\Mov(X)$ defined as the intersection of the images of facets of the positive orthant $\gamma \subset \bR^{k+5}$ under $U_k$. That is, we describe $\Mov(X)$ as the intersection of cones spanned by any set of $k+4$ columns of~$U_k$. To prove $C_k \subset \Mov(X)$ we look at the rays of $C_k$ and show that they belong to the image of any facet. And for $C_k \supset \Mov(X)$ we use the inequalities for $C_k$: for each of them one can find a facet whose image satisfies considered inequality. We skip the details of this purely combinatorial argument.
\end{proof}

\begin{proposition}\label{proposition:GIT-subdivision-odd}
There are $k+1$ GIT chambers of $\Mov(X)$:
\begin{equation*}
\sigma_0 = cone(q,w_1,\ldots,w_k),\, \sigma_i = cone(v_{1},v_2,\ldots, v_{i},w_{i},w_{i+1},\ldots,w_{k}) \hbox{ for } i \in \{1,\ldots,k\}
\end{equation*}
where $q$, $v_i$ and $w_i$ are as in~\eqref{eq_important_vec}.
\end{proposition}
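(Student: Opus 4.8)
The plan is to apply the machinery of Section~\ref{section_methods_Mov}: the GIT chambers subdividing $\Mov(X)$ are the intersections of the projected $I$-faces (Definition~\ref{def_aface}), so I must (a) read off the column vectors of $U_k$, (b) determine the $I$-faces from the single relation of Corollary~\ref{proposition:relations-odd-case}, and (c) compute the common refinement of the projected $I$-faces inside $\Mov(X)$. For (a) one checks directly that the columns of $U_k$ are the vectors $v_k$ (column $Z_1$), $w_k$ (column $Z_2$), $E_\eps=-2e_{k+1}$ (column $T_\eps$), $v_1$ (column $Z_3$), $E_i=-e_i$ (columns $T_i$) and $q$ (column $Z_4$), in the notation of~\eqref{eq_important_vec}. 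For (b), since the ideal is generated by the single trinomial~\eqref{equation_cox_dn_odd} with monomials $Z_1^2$, $Z_2^2T_\eps$ and $Z_3^n\prod_i T_i^{n-2i}$ (all $T_i$ occur, as $n-2i\ge 1$ for $i\le k$), a subset $S$ of the variables is an $I$-face precisely when the number of these three monomials whose variables all lie in $S$ is different from~$1$: if no monomial survives the trinomial vanishes on the whole orbit, if two or three survive it can be made to vanish with all coordinates in $S$ nonzero, while a single surviving monomial is a nonvanishing unit on the orbit.

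The key simplification is the coordinate change $d_1=a_1$, $d_i=a_i-a_{i-1}$ for $2\le i\le k$ and $c=a_{k+1}$. In these coordinates the inequalities of Proposition~\ref{proposition:mov-odd} read $d_1\ge d_2\ge\cdots\ge d_k\ge 0$ and $c\ge 0$, and the separating hyperplanes turn out to be $H_i=\{d_i=c\}$ for $i=1,\ldots,k$. A short computation shows that the candidate chambers become $\sigma_0=\{c\ge d_1\}$, $\sigma_i=\{d_i\ge c\ge d_{i+1}\}$ for $1\le i\le k-1$ and $\sigma_k=\{d_k\ge c\}$ (all inside $\Mov(X)$); in this form it is immediate that the $\sigma_j$ have disjoint interiors, cover $\Mov(X)$, and are exactly $k+1$ in number, since a given $c\ge 0$ falls into exactly one of the $k+1$ slots determined by $d_1\ge\cdots\ge d_k\ge 0$.

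It remains to prove that the $H_i$ are precisely the walls of the GIT fan. For one inclusion I would exhibit, for each $i$, an explicit $I$-face whose projected face lies in $H_i$ and separates $\sigma_{i-1}$ from $\sigma_i$: the set $S_1=\{Z_2,T_2,\ldots,T_k\}$ and, for $i\ge 2$, the set $S_i=\{Z_2,Z_3\}\cup\{T_m:m\neq i-1,i\}$ are $I$-faces (no monomial survives), and one checks that the columns they contain all lie on $H_i$ and span it; adjoining $Z_4$ (the column $q$, which is off $H_i$) produces a full-dimensional projected $I$-face having $H_i$ as a facet, so crossing $H_i$ inside $\Mov(X)$ genuinely changes the set of projected $I$-faces containing the point, witnessing a wall.

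The main obstacle is the opposite inclusion: showing that \emph{no other} wall cuts through the interior of any $\sigma_j$, equivalently that every hyperplane spanned by a facet of a projected $I$-face and meeting $\mathrm{int}\,\Mov(X)$ coincides with one of the $H_i$. This is a finite but delicate case analysis over the $k$-element subsets of the columns $v_1,v_k,w_k,q,E_\eps,E_1,\ldots,E_k$; it is made tractable by the observation that in the $(d_1,\ldots,d_k,c)$-coordinates the columns $E_m$ take the $A_k$-root-like form $d_m=-1,\ d_{m+1}=+1$ (and $E_k$ gives $d_k=-1$), so the sign pattern sharply limits which spans can meet the open region $\{d_1>\cdots>d_k>0,\ c>0\}$. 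I expect this bookkeeping, rather than any conceptual point, to be the bulk of the work; for fixed small $k$ it can be confirmed independently with \texttt{gitfan.lib}~\cite{KeicherGITFan}, and the uniform pattern then extracted by induction on $k$.
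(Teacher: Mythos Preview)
Your outline is correct and structurally parallel to the paper's proof: both arguments establish (A) that the $\sigma_j$ cover $\Mov(X)$, (B) that adjacent $\sigma_j$ are separated by an actual projected $I$-face, and (C) that no projected $I$-face cuts the interior of any $\sigma_j$. Your coordinate change $(d_1,\ldots,d_k,c)$ is a genuine improvement over the paper for step~(A): it makes the covering and disjointness of the $\sigma_j$ immediate, whereas the paper writes out explicit positive combinations. For step~(B) your faces $S_i\cup\{Z_4\}$ coincide (up to the index shift $i\leftrightarrow i+1$) with the first family in the paper's Lemma~\ref{lemma:maximal-a-faces-odd}, line~\eqref{proj_faces_4}, which is exactly what the paper uses to separate chambers.

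The gap is step~(C), which you correctly flag as the main obstacle but do not carry out. The paper's device here is Lemma~\ref{lemma:maximal-a-faces-odd}: rather than reasoning about hyperplanes spanned by facets, it first \emph{classifies} all projected $I_k$-faces of dimension $k+1$ into a short explicit list (organised by which of the three monomials are killed), and then checks, type by type, that each such cone either contains a given $\sigma_j$ entirely (all rays of $\sigma_j$ lie in it) or is separated from $\mathrm{int}\,\sigma_j$ by an explicit hyperplane. This is still bookkeeping, but the classification reduces it to finitely many patterns uniform in~$k$, so no induction is needed; your closing suggestion to ``confirm for small $k$ and extract the pattern by induction'' is not a substitute for this and would not by itself constitute a proof. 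Your $(d,c)$-coordinates would in fact streamline the paper's case analysis (the separating hyperplanes the paper finds are exactly your $H_i$), so the natural completion of your approach is to formulate the analogue of Lemma~\ref{lemma:maximal-a-faces-odd} in those coordinates and then do the finite check there.

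One minor point: to conclude in (B) that $H_i$ is a genuine wall you need more than ``$H_i$ is a facet of a full-dimensional projected $I$-face''; you must also check that some interior point of $\sigma_{i-1}$ lies in the \emph{interior} of that projected cone. The paper handles this by showing the stronger statement that the relevant projected $I$-face contains all of $\sigma_0,\ldots,\sigma_{i-1}$.
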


To give the proof we need to describe projected $I_k$-faces of the positive orthant $\gamma \subset \bR^{k+5}$ under $U_k$ which are big enough to cut out a full-dimensional chamber.

\begin{lemma}\label{lemma:maximal-a-faces-odd}
The projected $I_k$-faces of dimension $k+1$ are spanned by the sets of rays given below. Note that indices $i \leq j$ (possibly equal!) are always from $\{1,\ldots,k\}$.
\begin{align}
\begin{split}\label{proj_faces_1}
  &\{v_1,- e_{1},\ldots,-e_k,\pm e_{k+1}\}, \{v_1,- e_{1},\ldots, - e_{k},e_{k+1}\}, \{v_1,- e_{1},\ldots, -e_{k+1}\}, \\
\end{split}\\
\begin{split}\label{proj_faces_2}
& \{q,v_1,w_k, -e_{1},\ldots, -e_{k+1} \} \setminus \{-e_i, -e_j\},\,\, \{q,w_k, -e_1,\ldots, -e_{k+1} \} \setminus \{-e_i\},\\
& \{v_{1},w_{k}, -e_1,\ldots, -e_{k+1} \} \setminus \{-e_i, -e_j\},\quad \{w_{k},-e_1,\ldots, -e_{k+1} \} \setminus \{-e_i\},\\
\end{split}\\
\begin{split}\label{proj_faces_3}
  & \{-e_{1},\ldots,-e_{k},\pm e_{k+1}\},\quad \{-e_{1},\ldots,-e_{k},-e_{k+1}\},\quad \{-e_{1},\ldots,-e_{k}, e_{k+1}\},\\
\end{split}\\
\begin{split}\label{proj_faces_4}
  & \{q,v_{1},w_{k},-e_{1},\ldots,-e_k\} \setminus \{-e_i, -e_j\},\quad \{q,w_{k},-e_{1},\ldots,-e_k \} \setminus \{-e_i\},\\
\end{split}\\
\begin{split}\label{proj_faces_5}
& \{v_{1},-e_{1},\ldots, -e_{k+1} \} \setminus \{-e_i\},\quad \{q,v_{1},-e_{1},\ldots, -e_{k+1} \} \setminus \{-e_i\}.
  \end{split}
\end{align}
\end{lemma}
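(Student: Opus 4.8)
The plan is to reduce the statement to a finite combinatorial enumeration governed by the single trinomial of Corollary~\ref{proposition:relations-odd-case}. First I would record the columns of $U_k$ in the notation of~\eqref{eq_important_vec}: the variable $Z_1$ gives the ray $v_k$, $Z_2$ gives $w_k$, $Z_3$ gives $v_1$, $Z_4$ gives $q$, $T_\eps$ gives $-2q$, and $T_i$ gives $-e_i$ for $i\in\{1,\ldots,k\}$. A face of $\gamma$ is recorded by the subset $S$ of these variables allowed to be nonzero, and by Definition~\ref{def_aface} it is an $I_k$-face exactly when the $(\bC^*)^S$-orbit meets $\Spec\cR(X)$. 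Since $I_k$ is generated by the one relation $Z_1^2 - Z_2^2T_\eps - 4Z_3^n\prod_i T_i^{n-2i}$, restricting it to such an orbit keeps only those of the three monomials $Z_1^2$, $Z_2^2T_\eps$, $Z_3^n\prod_iT_i^{n-2i}$ all of whose variables lie in $S$. A Laurent polynomial that vanishes identically, or has at least two monomials, has a torus zero, while a single nonvanishing monomial never does; hence $S$ is an $I_k$-face if and only if the number of ``surviving'' monomials is $0$, $2$ or $3$ (equivalently: not exactly one). This is the criterion I would use throughout.

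Next I would translate full-dimensionality. Since $\dim U_k(F)=\dim_\bR\operatorname{span}\{\text{columns of }S\}$, a projected $I_k$-face is $(k+1)$-dimensional precisely when the chosen columns span $\bR^{k+1}$. The $k$ directions $e_1,\ldots,e_k$ are supplied by the rays $-e_i$, and the last direction $e_{k+1}$ by $q$, $-2q$ or $w_k$ (whose last coordinate is $1$). The key mechanism is a set of redundancy relations that let me enlarge a face by ``free'' generators without changing its image: one checks $v_k = k\,v_1+\sum_j(k-j)(-e_j)$, so $v_k\in cone(v_1,-e_1,\ldots,-e_k)$; $q = w_k+\sum_j j(-e_j)$, so $q\in cone(w_k,-e_1,\ldots,-e_k)$; and $v_k = w_k+\tfrac12(-2q)$. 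Consequently a face with, say, $M_3$ surviving can be augmented by $Z_1$ (whose ray $v_k$ is then redundant) to force a second surviving monomial, so that the $I_k$-face condition is met while the projected cone is unchanged. This is why several of the listed cones do not literally display the ray $v_k$ of $Z_1$.

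With these two ingredients the enumeration is organized by which monomials survive and how each coordinate direction is covered. When all $T_i$ are present the $e_i$ are covered directly by the $-e_i$; the vertical direction then admits three ``caps'' according to whether $Z_4$, $T_\eps$, or both lie in $S$, giving the symbols $e_{k+1}$, $-e_{k+1}$, $\pm e_{k+1}$. Keeping $Z_3$ (the ray $v_1$) and forcing a second surviving monomial yields family~\eqref{proj_faces_1}, while dropping all of $Z_1,Z_2,Z_3$ gives the zero-survivor cones of family~\eqref{proj_faces_3}. When a single $T_i$ is dropped the missing direction $e_i$ must be recovered from $v_1$, forcing $Z_3\in S$; this produces family~\eqref{proj_faces_5} and the second lines of~\eqref{proj_faces_2} and~\eqref{proj_faces_4}. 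When two $T_i,T_j$ are dropped, recovering $e_i,e_j$ requires two of $v_1,v_k,w_k$, whose images $e_i+e_j$ and $i\,e_i+j\,e_j$ are independent because $i\neq j$; this forces $Z_3$ together with $Z_1$ or $Z_2$, and after matching caps gives the first lines of~\eqref{proj_faces_2} and~\eqref{proj_faces_4}.

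The main obstacle is not any single computation but the bookkeeping: one must verify that every displayed cone genuinely arises from some $I_k$-face of full rank, and conversely that full-dimensionality forces a face into one of the listed shapes, all without double-counting cones that coincide after projection because of the redundancy relations above. In particular I would check that at most two of the $-e_i$ can be dropped, since dropping three leaves the positive rays $v_1,v_k,w_k$ unable to span the corresponding three-dimensional coordinate subspace, so no full-dimensional face survives; and that the ``$\pm$'', ``$+$'' and ``$-$'' caps are correctly paired with the presence of $T_\eps$ and $Z_4$. As this is a purely combinatorial check, I would either carry it out by the casework just sketched or confirm it with the \texttt{gitfan.lib} computation referenced in Section~\ref{section_methods_Mov}.
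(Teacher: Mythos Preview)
Your proposal is correct and follows essentially the same approach as the paper: both reduce the $I_k$-face condition to the trinomial criterion (the number of surviving monomials must not be exactly one), and both pair this with the requirement that the chosen columns span $\bR^{k+1}$. Your write-up is in fact more explicit than the paper's in two respects: you spell out the redundancy relations $v_k\in cone(v_1,-e_1,\ldots,-e_k)$, $q\in cone(w_k,-e_1,\ldots,-e_k)$, $v_k=w_k+\tfrac12(-2q)$ that explain why $v_k$ can be suppressed from the listed generating sets, and you give the clean reason why at most two of the $T_i$ can be dropped (the projections of $v_1,v_k,w_k$ to any three coordinate directions $e_i,e_j,e_l$ with $i<j<l$ span only a $2$-plane, since $v_k$ and $w_k$ agree there).

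The only discrepancy is organizational. The paper slices the enumeration by \emph{which monomials vanish} (none for \eqref{proj_faces_1}, exactly the third for \eqref{proj_faces_2}, all three for \eqref{proj_faces_3}--\eqref{proj_faces_5}), whereas you slice by \emph{how many $T_i$ are dropped}. Both lead to the same list, but your references to ``first lines'' and ``second lines'' of \eqref{proj_faces_2} and \eqref{proj_faces_4} do not match the way the items are laid out in the statement (for instance \eqref{proj_faces_4} is a single line containing two families, and the $i=j$ case of the $\setminus\{-e_i,-e_j\}$ families already covers a single dropped $T_i$). If you keep your organization, just be careful to reconcile it with the displayed groupings so the reader can see that every family is hit.
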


\begin{proof}
  By definition~\ref{def_aface} we need to choose all sets of columns of $U_k$ which span~$\bR^{k+1}$ such that if we set all variables corresponding to \emph{not}~chosen columns to 0 in the trinomial~\eqref{equation_cox_dn_odd} then we still can find non-zero values for the remaining variables to make the trinomial vanish. This last condition means that we may set to 0 precisely 3, 1 or 0 monomials in the trinomial (by choosing columns corresponding to any sets of their variables), but not 2 of them. In this way we make the list of all possible sets given above, checking if they span~$\bR^k$, simplifying them and avoiding redundancies.

  If the third monomial $Z_3\prod_{i}T_i$ is non-zero, then we can get only the whole space or a halfspace,~\eqref{proj_faces_1}. Four sets in~\eqref{proj_faces_2} correspond to setting only the third monomial to~0: note that we may remove at most 2 columns from the third group to have a set spanning~$\bR^{k+1}$. The last three cases describe the situation with all three monomials set to 0. In~\eqref{proj_faces_3} we have $Z_1 = Z_2 = Z_3 = 0$, in~\eqref{proj_faces_4} $Z_1 = T_{\eps} = Z_3\prod_{i}T_i = 0$ (note that here we need the last column to span the whole~$\bR^{k+1}$) and in~\eqref{proj_faces_5} $Z_1 = Z_2 = \prod_i T_i = 0$.
  
\end{proof}

\begin{proof}[Proof of Proposition~\ref{proposition:GIT-subdivision-odd}]
First note that $\Mov(X)$ is indeed a sum of given chambers. All chambers are contained in $\Mov(X)$ since all their rays lie in this cone. For the other inclusion, take $v = aq + a_{1}v_{1}+\ldots+a_kv_k\in \Mov(X)$. If $a \ge a_1+\ldots + a_k$ then $v$ is in $\sigma_0$:
\begin{equation*}
v = (a-a_{1}-\ldots-a_k)q + a_{1}w_{1}+\ldots+a_kw_k.
\end{equation*}
Otherwise, let $j$ be maximal such that $a< a_{j}+\ldots+a_k$. Let $S_j = a_j+\ldots + a_k$. Then $v$ can be written as an element of $\sigma_j$:
\begin{equation*}
v = a_{1}v_1+\ldots+a_{j-1}v_{j-1} + (S_j-a)v_j+ (a-S_{j+1})w_j+ a_{j+1}w_{j+1}+\ldots+a_k w_k.
\end{equation*}

Next, we prove that projected $I_k$-faces do not subdivide given chambers. That is, any projected $I_k$-face of maximal dimension either contains a whole chamber, or does not contain any of its interior points. It can be checked directly, by showing for each chamber and each type of $I_k$-face of maximal dimension, that either all rays of the chamber lie within the $I_k$-face or that there is a hyperplane separating interiors of these cones. An easy way for finding such a hyperplane is to determine it by computation for small $k$ and then deduce the equation in the general case. Since the results are similar for all types of $I_k$-faces, we present here just a single case, one of the least trivial.

Consider $I_k$-faces of the first type from~\eqref{proj_faces_4}. If $j=i$ or $j = i+1$ then such an $I_k$-face contains chambers $\sigma_0,\ldots, \sigma_i$, because one can construct $v_1,\ldots,v_i$ from its rays as combinations of $mv_1$ and negatives of basis vectors, and $w_1,\ldots, w_k$ either as $v_i+q$ or as combinations of $w_k$ and negatives of basis vectors. The hyperplane $a_{i+1}-a_i = a_{k+1}$ separates interiors of such an $I_k$-face and chambers $\sigma_{i+1},\ldots, \sigma_k$. Now assume that $j-i > 1$. Then the hyperplane $a_i+a_j = a_{i+1} + a_{j-1}$ separates the interiors of the $I_k$-face and any of the chambers (i.e. the whole $\Mov(X)$).

Finally, we have to show that any chamber can be separated from any other using a projected $I_k$-face. But this follows from the previous paragraph: $I_k$-faces of the first type from~\eqref{proj_faces_4} separate $\sigma_i$ from $\sigma_{i+1},\ldots,\sigma_k$ for $i = 0,\ldots,k-1$.
\end{proof}

\begin{example}\label{example_D7}
  For $D_{14}$, that is $k = 3$, we give the equation for the Cox ring of crepant resolutions and the matrix of the Picard torus action. The picture shows a 3-dimensional section of the $\Mov$ cone subdivided into chambers. Three chambers, sharing an edge on the back face of the tetrahedron, are shown in grey, the last one is white. One sees that there is only one way of walking through chambers, starting from the leftmost or the topmost, i.e. crepant resolutions and flops form a sequence.
  
  \begin{tabular}{cr}
 & \multirow{4}{*}{\includegraphics[width=0.5\textwidth]{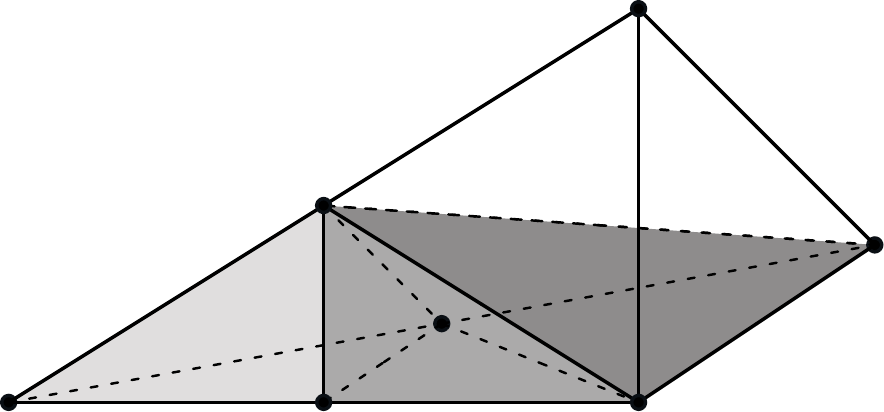}} \\
$\left( \begin{array}{c|cc|cccc|c}
1 & 1 & 0  & 1 & -1 & 0 & 0 & 0 \\
2 & 2 & 0  & 1 & 0 & -1 & 0 & 0  \\
3 & 3 & 0  & 1 & 0 & 0 & -1 & 0  \\
0 & 1 & -2 & 0 & 0 & 0 & 0 & 1  \\
\end{array}
  \right)$
  &  \\
  & \\
$Z_{1}^{2} - Z_{2}^{2}T_{\eps} - 4Z_3^{7}T_{1}^{5}T_{2}^{3}T_{3}$ &  
\end{tabular}

\end{example}

\subsubsection{Structure of the central fibre}\label{section_central_fibre_odd}

We describe the structure of the central fibre of the resolution, check how it changes under a flop and find the chamber corresponding to the $G$-$\Hilb$ resolution.

\begin{proposition}\label{propostion:dihedral-invariants}
If $G$ is a representation of a dihedral group defined as in~\eqref{gens_dihedral} then $\bC[x,y,z]^{G} = \bC[x^{n}+y^{n}, (x^{n}-y^{n})z, xy,z^2]$.
\end{proposition}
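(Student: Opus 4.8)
The plan is to compute the full ring of invariants $\bC[x,y,z]^G$ directly, using the fact that $G$ is generated by $\rho$ and $\eps$ with the explicit matrices from~\eqref{gens_dihedral}. Since we already know $\bC[x,y,z]^{[G,G]} = \bC[x,y,z]^{\langle\rho\rangle}$ is generated by $x^n+y^n$, $x^n-y^n$, $xy$, $z$ (from the first Lemma of this subsection), and that $\Ab(G) \simeq \bZ_2$ acts via the class of $\eps$, the strategy is to simply pass to the $\eps$-invariant part of this smaller ring. That is, $\bC[x,y,z]^G = \left(\bC[x,y,z]^{[G,G]}\right)^{\langle\eps\rangle}$, so I only need to understand the $\bZ_2$-action on the four generators.

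Concretely, from the first Lemma we know that $\eps$ fixes $x^n+y^n$ and $xy$, while it negates both $x^n-y^n$ and $z$. So I would write an arbitrary element of $\bC[x,y,z]^{[G,G]}$ as a polynomial in the four generators, split it into its $\eps$-eigenspaces, and identify the invariant subalgebra. The $+1$-eigenspace is generated as an algebra by the two invariant generators $x^n+y^n$, $xy$ together with all products of an \emph{even} number of the two anti-invariant generators $x^n-y^n$ and $z$. The three quadratic products $(x^n-y^n)^2$, $(x^n-y^n)z$, and $z^2$ are the natural candidates, but $(x^n-y^n)^2$ is redundant: using the identity $(x^n-y^n)^2 = (x^n+y^n)^2 - 4(xy)^n$, it lies in the subalgebra generated by $x^n+y^n$ and $xy$. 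Hence the invariant ring is generated by $x^n+y^n$, $(x^n-y^n)z$, $xy$, and $z^2$, which is exactly the claimed set.

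There is a small subtlety I would want to handle carefully: showing that these four elements really \emph{generate} the whole invariant subalgebra, not just that they are invariant. The clean way is the eigenspace decomposition argument: every $\eps$-invariant element of $\bC[x,y,z]^{[G,G]}$ is a sum of monomials (in the four original generators) whose total degree in the anti-invariant pair $\{x^n-y^n,\,z\}$ is even, and every such even product can be rewritten using the three quadratics above, one of which collapses via the Pythagorean-type relation into the invariant pair. I should make sure the argument that $\bC[x,y,z]^{[G,G]}$ has \emph{no other} generators beyond the four listed is what licenses restricting to monomials in exactly these four symbols.

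I expect the main (though still modest) obstacle to be precisely this generation step: writing out the eigenspace splitting cleanly and verifying that $(x^n-y^n)^2$ is expressible in the invariant generators, so that no fourth quadratic invariant is needed and the list is complete and irredundant. The invariance of the four listed elements is immediate from the first Lemma, so the content is entirely in the surjectivity/completeness direction. One could alternatively invoke the general principle that for $H \triangleleft G$ one has $\bC[x,y,z]^G = (\bC[x,y,z]^H)^{G/H}$ and then compute invariants of the induced $\bZ_2$-action on the coordinate ring of $\Spec \bC[x,y,z]^{[G,G]}$, but the direct monomial argument is more transparent here and avoids any reconstruction of the singularity's coordinate ring.
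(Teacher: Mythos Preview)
Your proof is correct and follows essentially the same route as the paper: first pass to the invariants of the diagonal cyclic subgroup $\langle\rho\rangle$, then take the $\eps$-invariant subalgebra via the induced diagonal $\bZ_2$-action on the four generators, with your identity $(x^n-y^n)^2 = (x^n+y^n)^2 - 4(xy)^n$ making explicit the reduction step the paper leaves to the reader. The one small caveat is that the proposition is stated for all $n$, so you should phrase the first step as computing $\bC[x,y,z]^{\langle\rho\rangle}$ directly (as the paper does) rather than identifying it with $\bC[x,y,z]^{[G,G]}$, since $[G,G]=\langle\rho^2\rangle\neq\langle\rho\rangle$ when $n$ is even; the rest of your argument is unaffected.
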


\begin{proof}
We take a generating set of the invariants of the first (diagonal) generator~$\rho$ and modify it to obtain eigenvectors of the action of the second generator~$\eps$. (In the case of~$n$ odd we have already seen this, since~$\rho$ generates~$[G,G]$ and~$\eps$ determines the action of~$\Ab(G)$.) Thus we have an induced diagonal action of~$\eps$ on the invariant ring of~$\rho$, so we just need to determine invariant monomials in generators of this ring.
\end{proof}

The ideal of $\bC[Z_{1},Z_{2},Z_{3},Z_{4}, T_{\eps}, T_{i} \colon i \in \{1,\ldots,k\}]$ of the subset of $\Spec \cR(X)$ which is mapped to the central fibre of the resolution by the GIT quotient morphism is the inverse image under $\kappa$, see~\eqref{map_pi_odd}, of the ideal generated by the non-constant elements of~$\bC[x,y,z]^{G}$ in~$\cR(X)$. (This is the same as the ideal generated by~$I_k$ and the Picard torus invariants.)

\begin{corollary}
The ideal of the subset of $\Spec \cR(X) \subset \bC^{k+5}$ mapped to the central fibre of $X/G$ is
$$J_k = I_k + (Z_1T_{1}T_2^2\cdots T_{k}^{k},\, Z_{2}Z_{4}T_{\eps}T_{1}T_2^2\cdots T_{k}^{k},\, Z_{3}T_{1}T_2\cdots T_{k},\, Z_4^2 T_{\eps}).$$  
\end{corollary}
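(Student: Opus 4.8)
The plan is to read the defining ideal off directly from the description of $\kappa$ in~\eqref{map_pi_odd}, using the explicit generators of $\bC[x,y,z]^G$ furnished by Proposition~\ref{propostion:dihedral-invariants}. By the paragraph preceding the statement, the ideal sought is $J_k=\kappa^{-1}(\mathfrak{a})$, where $\mathfrak{a}\subset\cR(X)$ is the ideal generated by all non-constant elements of $\bC[x,y,z]^G\subset\cR(X)$. Since $\kappa$ is surjective with kernel $I_k$, and preimages of ideals under a surjection satisfy $\kappa^{-1}(\mathfrak{a})=\ker\kappa+(\text{any preimages of a generating set of }\mathfrak{a})$, the whole problem reduces to exhibiting one $\kappa$-preimage of each generator of $\mathfrak{a}$ and adjoining these to $I_k$.

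By Proposition~\ref{propostion:dihedral-invariants} the ideal $\mathfrak{a}$ is generated by the four invariants $x^n+y^n$, $(x^n-y^n)z$, $xy$ and $z^2$. First I would lift each of these along $\kappa$ by multiplying the relevant $Z$-variable(s) by a monomial in the $T$-variables chosen to cancel exactly the $t_\eps$- and $t_i$-factors carried by the generators of $\cR(X)$. Concretely, since $\kappa(Z_1)=(x^n+y^n)\prod_i t_i^{in/\gcd(n,i)}$ and $\kappa(T_i)=t_i^{-n/\gcd(n,i)}$, one has $\kappa(T_i^i)=t_i^{-in/\gcd(n,i)}$, so the $\gcd$-exponents cancel cleanly and $\kappa\bigl(Z_1\prod_i T_i^i\bigr)=x^n+y^n$. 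The same bookkeeping gives $\kappa\bigl(Z_2Z_4T_\eps\prod_iT_i^i\bigr)=(x^n-y^n)z$ (here $Z_2Z_4$ contributes a factor $t_\eps^2$, absorbed by $T_\eps=t_\eps^{-2}$), $\kappa\bigl(Z_3\prod_iT_i\bigr)=xy$, and $\kappa(Z_4^2T_\eps)=z^2$. These four monomials are exactly the extra generators appearing in the statement, written out as $\prod_iT_i^i=T_1T_2^2\cdots T_k^k$ and $\prod_iT_i=T_1T_2\cdots T_k$.

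Combining the two steps yields $J_k=I_k+\bigl(Z_1T_1T_2^2\cdots T_k^k,\ Z_2Z_4T_\eps T_1T_2^2\cdots T_k^k,\ Z_3T_1T_2\cdots T_k,\ Z_4^2T_\eps\bigr)$, as claimed. The computation itself is routine; the only point needing care is checking that each chosen $T$-monomial really clears all the $t$-factors, which is where the precise valuation exponents $in/\gcd(n,i)$ from Lemma~\ref{table:valuationsOdd} enter and must be matched against the exponents $n/\gcd(n,i)$ defining the generators $T_i$. I expect no genuine obstacle beyond this bookkeeping, since the reduction $\kappa^{-1}(\mathfrak{a})=I_k+(\text{lifts})$ is formal.
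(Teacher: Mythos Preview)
Your proposal is correct and follows exactly the approach the paper sets up in the paragraph immediately preceding the corollary: take the preimage under $\kappa$ of the ideal generated by the non-constant $G$-invariants from Proposition~\ref{propostion:dihedral-invariants}, and use surjectivity of $\kappa$ to write this as $I_k$ plus explicit monomial lifts. The paper states the corollary without proof, treating it as an immediate consequence of that setup; your computation of the four lifts is the routine verification the paper leaves implicit.
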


\begin{lemma}
The subset of $\Spec \cR(X) \subset \bC^{k+5}$ mapped to the central fibre of the resolution $X$ decomposes into the following irreducible components:
\begin{align*}
  & W_i = \{Z_{4} = T_i = Z_{1}^{2} - Z_{2}^{2}T_{\eps} = 0\}\hbox{ and } W'_i = \{Z_{1} = T_{\eps} = T_{i} = 0\} \hbox{ for } i \in \{1,\ldots,k\},\\
  & W_{0} = \{Z_{1} = Z_{3} =  T_{\eps} = 0\},\quad W_{u} = \{Z_{1} = Z_{2} = Z_{3} = Z_{4} = 0\}.
\end{align*}
\end{lemma}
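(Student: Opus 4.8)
The plan is to read off the irreducible components of the affine set $V(J_k)\subset\bC^{k+5}$ directly, treating the statement as a hand computation of irreducible components rather than a full primary decomposition. The key structural observation is that the three monomials appearing in the generators of $J_k$ and in the trinomial~\eqref{equation_cox_dn_odd}, namely $T_1T_2^2\cdots T_k^k$, $T_1\cdots T_k$ and $\prod_{i=1}^k T_i^{\,n-2i}$, all vanish on exactly the same locus $\{T_1\cdots T_k=0\}$. This is because $n=2k+1$ forces every exponent $n-2i$ (for $1\le i\le k$) to satisfy $n-2i\ge 1$, so each $T_i$ genuinely occurs in $\prod_i T_i^{\,n-2i}$. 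Consequently, on $V(J_k)$ the four non-trinomial generators of $J_k$ collapse to the clean dichotomies ``$Z_1=0$ or some $T_i=0$'', ``$Z_3=0$ or some $T_i=0$'', and ``$Z_4=0$ or $T_\eps=0$''.

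First I would check the easy inclusion: each listed set lies in $V(J_k)$ and is irreducible. Membership is immediate substitution into the generators of $J_k$; for instance on $W_i$ one has $T_i=0$, which kills all three of the above monomials, together with $Z_4=0$, leaving only the relation $Z_1^2-Z_2^2T_\eps$, which is the third defining equation of $W_i$. The sets $W'_i$, $W_0$, $W_u$ are coordinate subspaces, hence irreducible; for $W_i$ one writes it as $V(Z_1^2-Z_2^2T_\eps)\times\bC^{k}$ and notes that the quadric $Z_1^2-Z_2^2T_\eps$ is irreducible, since $Z_2^2T_\eps$ is not a square in $\bC[Z_2,T_\eps]$ (equivalently, $Z_1^2-Z_2^2T_\eps=(Z_1-a)(Z_1+a)$ would require $a^2=Z_2^2T_\eps$, impossible).

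The substance is the reverse inclusion $V(J_k)\subseteq W_u\cup W_0\cup\bigcup_i(W_i\cup W'_i)$, which I would obtain by a case split on whether all $T_i$ are non-zero. If all $T_i\neq 0$, the first two dichotomies give $Z_1=Z_3=0$, so the trinomial collapses to $Z_2^2T_\eps=0$; combined with $Z_4^2T_\eps=0$ this puts the point in $W_0$ when $T_\eps=0$, and in $W_u$ when $T_\eps\neq 0$ (which then forces $Z_2=Z_4=0$). If instead $T_{i_0}=0$ for some $i_0$, all three monomials vanish, so three of the four non-trinomial generators of $J_k$ are automatically satisfied and the trinomial reduces to $Z_1^2-Z_2^2T_\eps=0$; the remaining relation $Z_4^2T_\eps=0$ then places the point in $W_{i_0}$ when $Z_4=0$, and in $W'_{i_0}$ when $T_\eps=0$ (using $Z_1^2=Z_2^2T_\eps=0$, hence $Z_1=0$).

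Finally I would rule out redundancy, to confirm these sets are exactly the irreducible components and not proper subsets of one another. A dimension count gives $\dim W_i=\dim W'_i=\dim W_0=k+2$ and $\dim W_u=k+1$, so the only conceivable nestings are $W_u$ inside a top-dimensional piece, or an equality among the top-dimensional pieces; both are excluded by exhibiting, for each set, a generic point that violates the defining equations of every other set (e.g.\ $W_u$ contains points with all $T_i\neq 0$ and $T_\eps\neq 0$, which no other piece does). I expect the one point requiring care to be arranging the case analysis of the reverse inclusion so that it is manifestly exhaustive, hand in hand with the repeated use of the positivity of the exponents $n-2i$; everything else reduces to routine substitution.
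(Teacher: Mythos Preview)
Your proof is correct and follows essentially the same route as the paper's. The paper's proof is a one-sentence sketch: show directly that $\sqrt{J_k}$ equals the intersection of the listed prime ideals, with the key observation that $Z_1^2-Z_2^2T_\eps$ lies in $\sqrt{J_k}$ (in fact in $J_k$ itself, since $Z_3^{\,n}\prod_iT_i^{\,n-2i}$ is a multiple of $Z_3T_1\cdots T_k\in J_k$); your case analysis is precisely the geometric unpacking of this, and your reduction of the trinomial to $Z_1^2-Z_2^2T_\eps$ in Case~B is exactly that key observation.
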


\begin{proof}
One can prove directly that the radical of $J_k$ is equal to the intersection of all ideals listed above; the key observation is that $Z_{1}^{2} - Z_{2}^{2}T_{\eps}$ is in the radical of~$J_k$.
\end{proof}

Let $\varphi_i\colon X_i\to \bC^{3}/G$ be the resolution corresponding to the choice of the linearisation of the Picard torus action given by a character from the interior of the chamber~$\sigma_i$.

\begin{proposition}
For $X_i$ the  stable components of the set mapped to the central fibre are $W_1,\ldots, W_i,W_i',\ldots, W_k'$.
For $X_0$ the stable components are $W_{0}, W'_1, \ldots, W'_k$.
\end{proposition}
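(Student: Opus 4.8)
The plan is to translate the notion of a \emph{stable component of the central fibre} into the toric Geometric Invariant Theory language of Section~\ref{section_methods_Mov}, and then reduce the whole statement to deciding which chambers $\sigma_i$ are swallowed by a short list of explicit cones. Each of the subvarieties $W_1,\ldots,W_k,W_1',\ldots,W_k',W_0,W_u$ is $\bT_X$-invariant, so its generic point lies in a single $(\bC^*)^{k+5}$-orbit; this orbit corresponds to the face $F(W)$ of $\gamma$ spanned by the coordinates that do not vanish identically on $W$, and $F(W)$ is an $I_k$-face because $W\subseteq \Spec\cR(X)$. Fix $w$ in the interior of $\sigma_i$. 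By the semistability criterion recalled in Section~\ref{section_methods_Mov} (after~\cite{BerchtoldHausenGIT}), the generic point of $W$ is $w$-semistable, hence --- as $w$ is generic --- $w$-stable, precisely when $w$ lies in the orbit cone $U_k(F(W))$, and the image of its stable locus is then an irreducible component of the central fibre of $X_i$. Since the GIT fan refines every projected $I_k$-face, each orbit cone is a union of closed chambers, so $w\in U_k(F(W))$ if and only if $\sigma_i\subseteq U_k(F(W))$. Thus $W$ is a stable component for $X_i$ exactly when $\sigma_i\subseteq U_k(F(W))$.

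Reading the columns of $U_k$ against the variables gives the weights $Z_1\mapsto v_k$, $Z_2\mapsto w_k$, $Z_3\mapsto v_1$, $Z_4\mapsto q$, $T_\eps\mapsto -2q$ and $T_l\mapsto -e_l$, whence
\begin{align*}
U_k(F(W_i)) &= \operatorname{cone}(v_k,\,w_k,\,v_1,\,-2q,\,\{-e_l: l\neq i\}),\\
U_k(F(W_i')) &= \operatorname{cone}(w_k,\,v_1,\,q,\,\{-e_l: l\neq i\}),\\
U_k(F(W_0)) &= \operatorname{cone}(w_k,\,q,\,-e_1,\ldots,-e_k),\\
U_k(F(W_u)) &= \operatorname{cone}(-2q,\,-e_1,\ldots,-e_k).
\end{align*}
By Proposition~\ref{proposition:GIT-subdivision-odd} each chamber is the cone on a subset of $\{q,v_1,\ldots,v_k,w_1,\ldots,w_k\}$, and orbit cones are convex, so $\sigma_i\subseteq U_k(F(W))$ if and only if every ray of $\sigma_i$ lies in $U_k(F(W))$; the problem is therefore purely the membership of $q$ and the $v_l,w_l$ in these four cones.

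I reduce each such membership to a one-parameter feasibility problem whose decisive coordinate is the one with no available $-e_i$. Writing $u_p$ for the $p$-th coordinate, a vector $u$ lies in $U_k(F(W_i))$ if and only if there is $s\ge 0$ (the summed coefficient of $v_k$ and $w_k$) with $s\ge u_{k+1}$, $s\,i\le u_i$, and $s(p-i)\ge u_p-u_i$ for $1\le p\le k$, $p\neq i$, the coefficients of the $-e_l$ and $-2q$ absorbing the slack. Solving this gives $v_l\in U_k(F(W_i))$ for every $l$ (take $s=0$ when $i\ge l$, and $s=1$, i.e. use $v_k$, when $i<l$), while $w_l\in U_k(F(W_i))$ iff $l\ge i$ (the constraint $s\ge u_{k+1}=1$ clashes with $s\,i\le u_i=\min(i,l)$ once $l<i$), and $q\notin U_k(F(W_i))$. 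As the rays of $\sigma_m$ for $m\ge 1$ are $v_1,\ldots,v_m$ and $w_m,\ldots,w_k$, this yields $\sigma_m\subseteq U_k(F(W_i))$ iff $m\ge i$, and $\sigma_0\not\subseteq U_k(F(W_i))$. The analogous, easier computations give: $q$ and all $w_l$ lie in $U_k(F(W_i'))$ while $v_l\in U_k(F(W_i'))$ iff $l\le i$, so $\sigma_m\subseteq U_k(F(W_i'))$ iff $m\le i$ (and $\sigma_0$ always); $q$ and all $w_l$ lie in $U_k(F(W_0))$ but $v_1\notin U_k(F(W_0))$, so only $\sigma_0$ qualifies; and $U_k(F(W_u))$ is the negative orthant, which contains no chamber. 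Assembling these, $W_i$ is stable exactly for $X_i,\ldots,X_k$, $W_i'$ exactly for $X_0,\ldots,X_i$, $W_0$ only for $X_0$, and $W_u$ for none, which is the asserted list.

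The only genuine obstacle is controlling these ray memberships uniformly in $k$, and the subtle point is the asymmetry between $v_l$ and $w_l$: they share their first $k$ coordinates and differ only in the last one ($0$ versus $1$), so it is exactly the inequality $s\ge u_{k+1}$ --- forced by the absence of a $+q$ generator in $U_k(F(W_i))$ --- that eliminates $w_l$ for $l<i$ while keeping every $v_l$, producing the observed ``staircase'' of stable components. I would settle this not by small-$k$ experiments but by the explicit non-negative expansions indicated above (with the single parameter $s\in\{0,1\}$) together with the separating coordinate inequality at position~$i$.
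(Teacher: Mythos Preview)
Your proof is correct and uses the same idea as the paper: identify the orbit cone of the generic point of each component $W$ and decide stability by whether the chamber $\sigma_i$ sits inside that cone. The paper does this by testing the single interior point $s_i=\sum(\text{rays of }\sigma_i)$ and writing out only the case $W_j'$ with $j<i$ explicitly (deriving the contradictory inequalities $k-i+1\ge\alpha\ge k-j+1$), leaving the remaining directions to ``similarly''; your ray-by-ray verification via the one-parameter feasibility in $s$ is a more systematic and complete execution of the same argument.
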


\begin{proof}
  We show that $W_j'$ for $j = 1,\ldots, i-1$ are not stable on $X_i$; the remaining assertions are proved similarly. To fix a linearisation corresponding to $X_i$ we pick a vector $s_i$ from $\sigma_i$ which is the sum of its rays. We need to show that it does not belong to the interior of the orbit cone $C_j'$ corresponding to an open subset of~$W_j'$. The cone $C_j'$ is spanned by all columns of $U_k$ except the ones corresponding to $Z_1$, $T_{\eps}$ and $T_j$.

  Take a positive combination of rays of $C_j'$ with coefficient $\alpha$ at $(1,2,\ldots,k,1)$ and $\beta$ at $(1,\ldots,1,0)$, and assume it gives~$s_i$. From a direct computation of coordinates of~$s_i$ ($j$, $j+1$ and $k+1$ respectively) and the negativity of certain rays of $C_j'$ we get three conditions:
\begin{align*}
  j\alpha + \beta &= (k(k+1) - (k-j)(k-j+1))/2+j,\\
  (j+1)\alpha + \beta &\geq (k(k+1) - (k-j-1)(k-j))/2+j+1,\\
  \alpha &\leq k-j+1.
\end{align*}
This leads to $k-i+1 \geq \alpha \geq k-j+1$, which contradicts the assumption $j < i$.

The investigation of stability of $W_i$ relies on the fact that its open subset corresponds to an orbit cone spanned by all columns of~$U_k$ except these corresponding to~$Z_4$ and~$T_i$. Finally, note that in a similar way one could list all orbit cones corresponding to stable points of $\Spec \cR(X)$, which we skip for the sake of brevity.
\end{proof}

Using similar methods to investigate the stability of the intersections of components $W_0,W_1,\ldots,W_k,W_1',\ldots,W_k'$ one also checks that the components of the central fibre form a chain.

\begin{corollary}
By analysing the proof of Theorem~5.1 in~\cite{NdCS} one checks that the $G$-$\Hilb$ resolution corresponds to the chamber $\sigma_0$.
\end{corollary}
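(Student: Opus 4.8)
The plan is to pin down $G$-$\Hilb$ among the explicit list $X_0, X_1, \ldots, X_k$ by matching the combinatorial description of its fibre over $[0] \in \bC^3/G$ coming from \cite{NdCS} with the central-fibre data computed in Section~\ref{section_central_fibre_odd}. The key point is that the results of that section already separate all $k+1$ chambers from one another: the stable components of the set mapped to the central fibre are $W_1, \ldots, W_i, W_i', \ldots, W_k'$ for $X_i$ with $i \ge 1$, and the distinguished component $W_0$ together with $W_1', \ldots, W_k'$ for $X_0$. Since these configurations are pairwise distinct chains, it suffices to read off the chain of exceptional curves over the origin for $G$-$\Hilb$ and to recognise it as the one belonging to $X_0$ (equivalently, to see that the distinguished component $W_0$, rather than $W_1$, occurs).

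To obtain this chain I would work through the proof of \cite[Thm~5.1]{NdCS}, where $G$-$\Hilb$ for a dihedral $G \subset \SO(3)$ is realised explicitly as a moduli space of $\theta$-stable representations of the McKay quiver for the distinguished $G$-$\Hilb$ stability parameter $\theta$. From that construction one extracts which exceptional divisors $E_1, \ldots, E_k$ (attached to the junior rotation classes) and $E_\eps$ (attached to the reflection class) meet the central fibre, in what order they sit in the chain, and hence the $G$-$\Hilb$ stability parameter itself. Either datum should suffice: one may match the exceptional configuration against the $X_i$ above, or locate $\theta$ directly inside the chamber decomposition of $\Mov(X)$ given by Proposition~\ref{proposition:GIT-subdivision-odd}.

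The step that needs care is the translation between the two normalisations. In \cite{NdCS} stability is indexed by the irreducible representations (equivalently the conjugacy classes) of $G$, whereas our chambers live in $N^1(X) \simeq \bR^{k+1}$ with the basis fixed before Proposition~\ref{proposition:mov-odd}, under which $E_i = -e_i$ and $E_\eps = -2e_{k+1}$. One has to set up the dictionary between these labels via the McKay correspondence (Theorem~\ref{theorem_mckay}) and track the sign conventions, and I expect this bookkeeping --- in particular fixing the orientation so that the $G$-$\Hilb$ parameter lands in $\sigma_0$ rather than at the opposite end $\sigma_k$ of the flop sequence --- to be the main obstacle. No new geometric input beyond \cite{NdCS} and Section~\ref{section_central_fibre_odd} should be required: once the dictionary is fixed, the identification of the chamber is forced because, as observed in Example~\ref{example_D7}, the chambers form a single line and $G$-$\Hilb$ need only be placed at its correct extreme.
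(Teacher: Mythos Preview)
Your proposal is correct and is essentially the approach the paper has in mind: the paper offers no proof beyond the statement itself, which simply asserts that comparing with the proof of \cite[Thm~5.1]{NdCS} identifies $G$-$\Hilb$ with~$\sigma_0$. Your write-up is a fleshed-out version of exactly this comparison, and your caution about the bookkeeping (matching the representation-theoretic labelling in \cite{NdCS} with the basis $e_1,\ldots,e_{k+1}$ via the McKay correspondence, and fixing the orientation so as to land in $\sigma_0$ rather than $\sigma_k$) is well placed --- that is precisely the content of ``analysing the proof''.
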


\subsection{The even case: $n = 2k$} Since we have presented the details of our method in the case of $D_{2n}$ for odd $n$, here we skip some details and arguments if they are the same as in the previous case.

The commutator subgroup consists of all even rotations: $[G,G] = \langle \rho^2 \rangle$. Then $\Ab(G) \simeq \bZ_2\times \bZ_2$ and we use classes of $\eps$ and $\eps\rho$ as its generators. Pairs of mutually inverse rotations are conjugate and reflections make two conjugacy classes, so there are $k+3$ conjugacy classes:
$$\{1\},\:\{\rho,\rho^{-1}\},\ldots, \{\rho^{k-1},\rho^{-k+1}\},\: \{\rho^{k}\},\: \{\eps,\eps\rho^2,\ldots,\eps\rho^{2k-2}\},\: \{\eps\rho,\eps\rho^3,\ldots, \eps\rho^{2k-1}\}.$$

The set of points in $\bC^3$ with nontrivial isotropy group consists of the line $x_1=x_2=0$ fixed by $\langle \rho \rangle$ and $n$ lines, each fixed by a reflection, e.g. $x_1-x_2=x_3=0$ fixed by $\langle \eps \rangle$ and $x_1 - \zeta x_2 = x_3 = 0$ fixed by $\langle \eps\rho \rangle$. In the quotient $\bC^3/G$ these lines are mapped to three components of the singular points set: a component $L_{\rho}$ with transversal $A_{2k-1}$ singularity and two components $L_{\eps}$ and $L_{\eps\rho}$ with transversal $A_1$ singularity (away from 0). The image of 0 has a worse singularity.

By the McKay correspondence we have $k$ exceptional divisors $E_1,\ldots, E_k$ mapped to $L_{\rho}$ and $E_{\eps}, E_{\eps\rho}$ mapped to $L_{\eps}, L_{\eps\rho}$ respectively.

\subsubsection{The Cox ring}
We compute the ring of $[G,G]$-invariants, give a generating set of eigenvectors of the $\Ab(G)$ action and provide values of all monomial valuations corresponding to conjugacy classes (of age~1) on these generators.

\begin{lemma}
  $\cR(\bC^3/G) = \bC[x,y,z]^{[G,G]}$  is generated by $x^k+y^k$, $x^k-y^k$, $xy$, $z$, where $\eps$ acts trivially on $x^k+y^k$ and $xy$ and multiplies by $-1$ the remaining two generators, and $\eps\rho$ acts trivially on $x^k-y^k$ and $z$ and multiplies by $-1$ the remaining two generators.
\end{lemma}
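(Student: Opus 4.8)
The plan is to follow the same two-step scheme as in the odd case (cf. the proof of Proposition~\ref{propostion:dihedral-invariants}): first compute the ring of $[G,G]$-invariants as the invariants of the single diagonal generator~$\rho^2$, and then diagonalise the residual $\Ab(G)$-action on a suitable set of generators, reading off the characters by direct substitution.

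First I would identify the $[G,G]$-action. Since $n=2k$ we have $[G,G]=\langle\rho^2\rangle$ and $\rho^2=\diag(\zeta^2,\zeta^{-2},1)$; as $\zeta$ is a primitive $2k$-th root of unity, $\zeta^2$ is a primitive $k$-th root, so $\langle\rho^2\rangle$ acts as the cyclic group $\bZ_k$ with weights $(1,-1,0)$ on $(x,y,z)$. A monomial $x^ay^bz^c$ is invariant precisely when $a\equiv b\pmod k$, and the monoid of exponent pairs $(a,b)$ with $a\equiv b\pmod k$ has Hilbert basis $\{(1,1),(k,0),(0,k)\}$. Hence $\bC[x,y,z]^{[G,G]}=\bC[xy,\,x^k,\,y^k,\,z]$, and replacing $x^k,y^k$ by the combinations $x^k+y^k$ and $x^k-y^k$ produces the four generators in the statement (these diagonalise the swap $x\leftrightarrow y$ that $\eps$ induces).

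Next I would determine the $\Ab(G)$-characters. Here $\Ab(G)\iso\bZ_2\times\bZ_2$ is generated by the classes of $\eps$ and $\eps\rho$, and both act on $\bC[x,y,z]^{[G,G]}$ because $[G,G]$ acts trivially. The element $\eps$ sends $x\leftrightarrow y$ and $z\mapsto-z$, while $\eps\rho$ is given by the corresponding matrix product of~\eqref{gens_dihedral}; substituting these transformations into each of the four generators and using $\zeta^k=-1$ to simplify the $k$-th powers yields the $\pm1$ eigenvalues. A convenient way to pin down and double-check the signs is the relation $\overline{\rho}=\overline{\eps}\cdot\overline{\eps\rho}$ in $\Ab(G)$, which forces the character of each generator under $\rho=\diag(\zeta,\zeta^{-1},1)$ to be the product of its $\eps$- and $\eps\rho$-characters; computing the (immediate) $\rho$-action directly then confirms consistency.

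I do not expect a serious obstacle. The only step requiring genuine care is the completeness of the generating set in the first stage, i.e. verifying that $\{(1,1),(k,0),(0,k)\}$ really generates the full monoid of invariant exponents — this is a one-line argument, since for $a\ge b$ with $a\equiv b\pmod k$ one has $(a,b)=b(1,1)+\tfrac{a-b}{k}(k,0)$, and symmetrically for $b\ge a$. Beyond that, the delicate point is purely bookkeeping: correctly tracking the root-of-unity signs (via $\zeta^k=-1$) when evaluating $\eps\rho$ on $x^k\pm y^k$, which is exactly where the consistency relation above is most useful.
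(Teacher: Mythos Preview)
Your approach is correct and matches the paper's: compute the invariants of the diagonal cyclic generator $\rho^2$ of $[G,G]=\langle\rho^2\rangle\simeq\bZ_k$, then diagonalise the residual $\Ab(G)$-action by direct substitution (the paper omits the proof here, referring back to the scheme of Proposition~\ref{propostion:dihedral-invariants}). Your consistency relation $\overline{\rho}=\overline{\eps}\cdot\overline{\eps\rho}$ is a genuine addition and in fact flags a typo in the stated lemma: carrying out the substitution shows that $\eps\rho$ fixes $xy$ and negates $z$, not the reverse --- this is also what the valuation row $\nu_{\eps\rho}$ in Lemma~\ref{table:valuationsEven} and the generator $zt_{\eps}t_{\eps\rho}$ in Theorem~\ref{theorem:Cox-even-case} encode.
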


\begin{lemma}\label{table:valuationsEven} The values of monomial valuations on given generators of $\cR(\bC^3/G)$ are as follows:
\begin{center}
\begin{tabular}{c|cccc}
val\textbackslash gen & $x^k+y^k$ & $x^k-y^k$ & $xy$ & $z$ \\
\hline
$\nu_{\rho^{i}}$ & $\frac{ik}{\gcd(n,i)}$ & $\frac{ik}{\gcd(n,i)}$ & $\frac{n}{\gcd(n,i)}$ & 0 \\
$\nu_{\eps}$ & 0 & 1 & 0 & 1\\
$\nu_{\eps\rho}$ & 1 & 0 & 0 & 1
\end{tabular}
\end{center}
\end{lemma}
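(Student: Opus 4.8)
The plan is to split the computation according to the type of conjugacy class. The classes of $\rho^i$ are represented by elements already diagonal in the coordinates $x,y,z$ in which the generators are written, whereas $\eps$ and $\eps\rho$ are not, so for the two reflection rows one first has to pass to eigencoordinates. In every case the recipe is the same: diagonalize $g$, read off the normalized exponent vector $(a_1,a_2,a_3)$ from the definition of $\nu_g$, and then for each generator take the minimum of $\alpha_1a_1+\alpha_2a_2+\alpha_3a_3$ over the monomials $x^{\alpha_1}y^{\alpha_2}z^{\alpha_3}$ occurring in it.

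For $g=\rho^i=\diag(\zeta^i,\zeta^{-i},1)$ with $\zeta=\zeta_n$ and $n=2k$, set $d=\gcd(n,i)$, so $\rho^i$ has order $r=n/d$ and, choosing $\zeta_r=\zeta^{d}$, its exponent vector is $(i/d,(n-i)/d,0)$, which lies in $[0,r)$ because $1\le i\le k$. As $i\le n-i$ for such $i$, the monomial $x^k$ dominates $y^k$, so $\nu_{\rho^i}(x^k\pm y^k)=ki/d$; for $xy$ one gets $i/d+(n-i)/d=n/d$; and $z$ gives $0$. These are precisely the entries $\tfrac{ik}{\gcd(n,i)}$, $\tfrac{n}{\gcd(n,i)}$ and $0$ of the first row.

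For the reflections I would use that both $\eps$ and $\eps\rho$ have order $2$ with eigenvalues $1,-1,-1$ (determinant $1$), so their exponent vector is $(0,1,1)$ in any eigenbasis $(\xi,\eta,z)$ with $\xi$ spanning the $+1$-eigenline; equivalently $\nu_g$ is then the vanishing order along the fixed line of $g$, i.e. the $(\eta,z)$-adic order. For $\eps$, which sends $x\mapsto y$ and $z\mapsto -z$, the eigencoordinates are $\xi=x+y$, $\eta=x-y$, $z$; expanding $x^k+y^k=2^{-k}\bigl((\xi+\eta)^k+(\xi-\eta)^k\bigr)$ shows the $\xi^k$ term survives (valuation $0$), while $x^k-y^k$ begins in $\eta$-degree $1$ (valuation $1$), $xy=\tfrac14(\xi^2-\eta^2)$ has valuation $0$, and $z$ has valuation $1$. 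For $\eps\rho$ the fixed line is spanned by $(1,\zeta,0)$, so I would take $\eta=x-\zeta^{-1}y$; here the decisive identity is $\zeta^{k}=-1$, which turns $y^k$ into $-\bigl((\xi-\eta)/2\bigr)^k$ and thereby interchanges the outcomes for $x^k+y^k$ and $x^k-y^k$, giving valuations $1$ and $0$, while $xy$ and $z$ again yield $0$ and $1$.

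The one genuinely non-mechanical point, which I would flag as the main obstacle, is precisely this change of coordinates for $\eps$ and $\eps\rho$: the weights cannot be read off the standard basis, and the interchange of $x^k+y^k$ and $x^k-y^k$ between the two reflection rows is forced by the even-case identity $\zeta^k=-1$. As a consistency check I would note that $g$ multiplies a monomial $\xi^a\eta^b z^c$ by $(-1)^{b+c}$, and $b+c$ is exactly its $\nu_g$-value; hence any generator that is a $g$-eigenvector has $\nu_g$ even or odd according as its eigenvalue is $+1$ or $-1$, which corroborates the parity of every entry in the last two rows, the exact values coming from the binomial expansions above.
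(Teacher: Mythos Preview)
Your argument is correct. The paper itself states this lemma without proof, treating it as a direct computation from the definition of a monomial valuation; your explicit verification --- diagonalising each representative, reading off the weight vector, and minimising over the monomials of each generator, with the key observation that $\zeta^{k}=-1$ interchanges the roles of $x^{k}+y^{k}$ and $x^{k}-y^{k}$ for $\eps\rho$ --- is exactly the computation the paper leaves implicit.
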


Let $\varphi:X\to \bC^3/G$ be a crepant resolution.

\begin{theorem}\label{theorem:Cox-even-case}
The Cox ring $\cR(X)$ as a $\bC$-subalgebra of $\bC[x,y,z]^{[G,G]}[t_{\eps}^{\pm 1}, t_{\eps\rho}^{\pm 1}, t_{i}^{\pm 1}\colon i \in \{1,\ldots,k\} ]$ is generated by:
\begin{multline*}
  (x^{k}+y^{k})t_{\eps\rho}\prod_{i=1}^{k}t_{i}^{\frac{ik}{\gcd(n,i)}}, \  (x^{k} - y^{k})t_{\eps}\prod_{i=1}^{k}t_{i}^{\frac{ik}{\gcd(n,i)}}, \ xy\prod_{i=1}^{k}t_{i}^{\frac{n}{\gcd(n,i)}}, \ zt_{\eps}t_{\eps\rho},\\
t_{\eps}^{-2}, \ t_{\eps\rho}^{-2}, \ \{t_{i}^{-\frac{n}{\gcd(n,i)}} \colon i \in \{1,\ldots, k\}\}.
\end{multline*} 
\end{theorem}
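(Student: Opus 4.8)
The plan is to apply Theorem~\ref{theorem_valuation_lifting}, exactly as one would in the odd case. First I would record that the elements listed are precisely the output of the general procedure preceding that theorem. Writing $\phi_1=x^k+y^k$, $\phi_2=x^k-y^k$, $\phi_3=xy$, $\phi_4=z$ for the minimal $\Ab(G)$-homogeneous generating set of $\bC[x,y,z]^{[G,G]}$ given just above Lemma~\ref{table:valuationsEven}, and reading the monomial valuations off Lemma~\ref{table:valuationsEven}, the element $\phi_i\prod_j t_j^{\nu_j(\phi_i)}$ is exactly the $i$-th generator in the statement, while $t_\eps^{-2}$, $t_{\eps\rho}^{-2}$ and $t_i^{-n/\gcd(n,i)}$ are the $t_j^{-\operatorname{ord}(g_j)}$, since $\eps,\eps\rho$ have order $2$ and $\rho^i$ has order $n/\gcd(n,i)$. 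By Proposition~\ref{prop_structure_cox_ring} these all lie in $\ovl\Theta(\cR(X))$, so it remains only to prove that they generate, i.e. to verify the valuation lifting condition.

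Let $\kappa\colon\bC[Z_1,Z_2,Z_3,Z_4]\to\bC[x,y,z]^{[G,G]}$ send $Z_i\mapsto\phi_i$ and let $\wt\nu_j$ be the induced valuations, $\wt\nu_j(Z_i)=\nu_j(\phi_i)$, for $j\in\{\rho^1,\dots,\rho^k,\eps,\eps\rho\}$. Since $[G,G]=\langle\rho^2\rangle$ acts through the order-$k$ cyclic group generated by $\rho^2=\diag(\zeta^2,\zeta^{-2},1)$, its invariant ring is $\bC[x^k,y^k,xy,z]$ with the single relation $x^k y^k=(xy)^k$; rewriting $x^k=(Z_1+Z_2)/2$, $y^k=(Z_1-Z_2)/2$ turns this into $\ker\kappa=(g)$ with the trinomial
\[ g = Z_1^2 - Z_2^2 - 4Z_3^k, \]
the analogue of Corollary~\ref{proposition:relations-odd-case} (that $(g)$ is the whole kernel follows as $g$ is irreducible and both sides have dimension $3$). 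The three monomials $A=Z_1^2$, $B=Z_2^2$, $C=Z_3^k$ carry the decisive combinatorial data: the set of monomials attaining $\min$ under $\wt\nu_j$ is $\{A,B\}$ for $\wt\nu_{\rho^i}$ with $i<k$, all of $\{A,B,C\}$ for $\wt\nu_{\rho^k}$, $\{A,C\}$ for $\wt\nu_\eps$ and $\{B,C\}$ for $\wt\nu_{\eps\rho}$. The point I will exploit is that \emph{any two} of these minimal sets meet, because each has at least two elements in the three-element set $\{A,B,C\}$.

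For the valuation lifting condition, fix an $\Ab(G)$-homogeneous $f$ and, since the $\phi_i$ are $\Ab(G)$-eigenvectors and $g$ is homogeneous, an $\Ab(G)$-homogeneous preimage $\wt f$. For any monomial map one has $\wt\nu_j(\wt f)\le\nu_j(f)$ for all $j$, with strict inequality for some $j_0$ exactly when the $\wt\nu_{j_0}$-initial form $\operatorname{in}_{j_0}(\wt f)$ has its leading terms cancel under $\kappa$. I would then show such an initial form is a multiple $p\cdot\operatorname{in}_{j_0}(g)$ and replace $\wt f$ by $\wt f-pg$, obtaining a new $\Ab(G)$-homogeneous preimage $\wt f'$ with $\wt\nu_{j_0}(\wt f')>\wt\nu_{j_0}(\wt f)$ and $\wt\nu_j(\wt f')\ge\wt\nu_j(\wt f)$ for every other $j$. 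That no valuation drops is exactly the pairwise-intersection property: the extra correction terms are $p$ times the non-initial monomials $m$ of $g$, and $\wt\nu_j(pm)=\wt\nu_j(p)+\wt\nu_j(m)\ge\wt\nu_j(p)+\wt\nu_j(\operatorname{in}_{j_0}g)=\wt\nu_j(\operatorname{in}_{j_0}\wt f)\ge\wt\nu_j(\wt f)$, the needed inequality $\wt\nu_j(m)\ge\wt\nu_j(\operatorname{in}_{j_0}g)$ holding precisely because some $\wt\nu_{j_0}$-minimal monomial of $g$ is also $\wt\nu_j$-minimal. As the values lie in a fixed discrete subset of $\bQ_{\ge0}$ and are bounded by the $\nu_j(f)$, iterating terminates at a preimage with $\wt\nu_j=\nu_j(f)$ for all $j$.

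The step needing the most care, and the main obstacle, is the claim that a cancelling initial form lies in $(\operatorname{in}_{j_0}g)$. This is \emph{false} over the full polynomial ring, because $\operatorname{in}_{j_0}(g)$ factors: $\operatorname{in}_{\rho^i}g=Z_1^2-Z_2^2=(Z_1-Z_2)(Z_1+Z_2)$ for $i<k$, and $\operatorname{in}_\eps g=Z_1^2-4Z_3^k=(Z_1-2Z_3^{k/2})(Z_1+2Z_3^{k/2})$ when $k$ is even (similarly for $\eps\rho$), so the kernel of the associated graded map is strictly larger than $(\operatorname{in}_{j_0}g)$. The resolution is that the spurious factors such as $Z_1-Z_2$ or $Z_1-2Z_3^{k/2}$ are \emph{not} $\Ab(G)$-homogeneous, since $Z_1,Z_2,Z_3$ carry the distinct degrees $(0,1),(1,0),(0,0)$; hence among $\Ab(G)$-homogeneous elements the graded kernel really does reduce to $(\operatorname{in}_{j_0}g)$. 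Thus $\Ab(G)$-homogeneity, which is built into the condition of Theorem~\ref{theorem_valuation_lifting}, is exactly what makes the reduction work, and this is the same feature that underlies the single-trinomial description in the reducible case. An alternative, following the second proof of Theorem~\ref{theorem:Cox-odd-case}, would be to bypass valuations and verify directly via~\cite[1.6.4.3]{CoxRings} that the GIT quotients of $\Spec$ of the candidate ring are smooth.
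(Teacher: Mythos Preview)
Your argument is correct and runs parallel to the paper's own route. The paper does not prove Theorem~\ref{theorem:Cox-even-case} separately but invokes the general reducible result, Theorem~\ref{theorem:cox-rings-reducible}; your write-up is effectively a specialisation of that proof to the even dihedral case. The iterative correction $\wt f\mapsto\wt f-pg$, together with the observation that any two $\wt\nu_j$-minimal subsets of the three monomials of $g$ intersect, is exactly the paper's argument for conditions (1)--(3) in the proof of Theorem~\ref{theorem:cox-rings-reducible}.

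The one genuine difference is how you handle the sublemma that an $\Ab(G)$-homogeneous relation among the initial forms lies in $(\operatorname{in}_{j_0}g)$. The paper isolates this as Lemma~\ref{lemma:relations-of-initials} and proves it by reducing to the known Cox rings of Du Val surfaces \cite{CoxSurf}, reversing the implication of Theorem~\ref{theorem_valuation_lifting}. You instead argue directly from the $\Ab(G)$-degrees $(0,1),(1,0),(0,0)$ of $Z_1,Z_2,Z_3$. Your claim is right, but the sentence ``the spurious factors are not $\Ab(G)$-homogeneous, hence \ldots'' is not yet a proof: non-homogeneity of a generator of a principal ideal does not by itself force homogeneous elements into a smaller ideal. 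What makes it work here is that the two irreducible factors of $\operatorname{in}_{j_0}g$ are swapped (up to sign) by a nontrivial character of $\Ab(G)$; applying that character to a homogeneous $h=(Z_1-Z_2)q$ shows $(Z_1+Z_2)\mid q$, and similarly for the $\eps,\eps\rho$ cases. Adding that one line would make your direct argument complete and entirely self-contained, which is a small gain over the paper's reduction to an external result.
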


We define the surjective map
\begin{equation}\label{map_pi_even}
  \kappa:\bC[Z_{1},Z_{2},Z_{3},Z_{4}, T_{\eps}, T_{\eps\rho}, T_{i} \colon i \in \{1,\ldots,k\}]\longrightarrow \cR(X)
\end{equation}
which sends each variable to the respective generator of $\cR(X)$.

\begin{corollary}\label{dihedral_even_eq}
The ideal $I_k = \ker \kappa$ of $\Spec\cR(X) \subset \bC^{k+6}$ is generated~by
\begin{equation*}
Z_{1}^{2}T_{\eps\rho} - Z_{2}^{2}T_{\eps} - 4Z_{3}^{k}\prod_{i=1}^{k-1}T_{i}^{k-i}.
\end{equation*}
\end{corollary}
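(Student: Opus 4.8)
The goal is to identify the single relation generating the kernel $I_k$ of the map $\kappa$ in~\eqref{map_pi_even}. The plan is to follow exactly the strategy that worked in the odd case (Corollary~\ref{proposition:relations-odd-case}), namely to first find the relation among the underlying invariants $x^k+y^k$, $x^k-y^k$, $xy$, $z$ in $\bC[x,y,z]^{[G,G]}$, and then to decorate it with the appropriate powers of the torus variables $T_\eps$, $T_{\eps\rho}$, $T_i$ so that the decorated relation is homogeneous with respect to the $\Cl(X)$-grading. First I would record the obvious algebraic relation among the four invariants: writing $A = x^k+y^k$, $B=x^k-y^k$, $C=xy$, we have $A^2 - B^2 = 4x^ky^k = 4(xy)^k = 4C^k$, and $z$ does not enter, so the single relation among the generators of $\bC[x,y,z]^{[G,G]}$ is $Z_1^2 - Z_2^2 - 4Z_3^k = 0$.

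Next I would lift this to a relation among the actual generators of $\cR(X)$ listed in Theorem~\ref{theorem:Cox-even-case} by tracking the $T$-exponents. The generator attached to $A$ carries $t_{\eps\rho}\prod t_i^{ik/\gcd(n,i)}$, the one attached to $B$ carries $t_\eps\prod t_i^{ik/\gcd(n,i)}$, and the one attached to $C$ carries $\prod t_i^{n/\gcd(n,i)}$. Squaring the first two and comparing with the $k$-th power of the third, the exponent of $t_i$ on the $A^2$ and $B^2$ terms is $2ik/\gcd(n,i)$, while on the $4C^k$ term it is $kn/\gcd(n,i)$; since $n=2k$ these are both equal to $2ik/\gcd(n,i)$, so the $\prod t_i$-factor is common to all three monomials and cancels out of the relation, leaving only the $t_\eps$, $t_{\eps\rho}$ discrepancy. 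Squaring gives $t_{\eps\rho}^2$ on the first term and $t_\eps^2$ on the second, whereas $C^k$ carries none; to make the monomials land in the same $\Cl(X)$-degree I multiply by the inverse-generators $t_\eps^{-2}=T_\eps$ and $t_{\eps\rho}^{-2}=T_{\eps\rho}$ appropriately, which is precisely what attaches $T_{\eps\rho}$ to $Z_1^2$ and $T_\eps$ to $Z_2^2$ and leaves the prefactor $\prod_{i=1}^{k-1}T_i^{k-i}$ on the third monomial after rewriting $2ik/\gcd(n,i)$ in terms of the $T_i^{-n/\gcd(n,i)}$ generators. I would verify carefully that the resulting exponents are exactly $Z_1^2 T_{\eps\rho} - Z_2^2 T_\eps - 4Z_3^k\prod_{i=1}^{k-1}T_i^{k-i}$, in particular that the highest index contributing to the product is $k-1$ rather than $k$.

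Finally, to conclude that this trinomial generates all of $I_k$ (not merely that it lies in $I_k$), I would argue as in the odd case. The ring $\cR(X)$ has Krull dimension $3+m = k+3$ (it is the Cox ring of a $3$-fold with $\Cl(X)\simeq\bZ^{k+2}$), and it embeds in $\bC^{k+6}$, so $\Spec\cR(X)$ is a hypersurface; since we have exhibited one nonzero element of $I_k$ of the correct shape defining an irreducible (hence reduced, as $\cR(X)$ is a domain) codimension-one subvariety, it must cut out $\Spec\cR(X)$ set-theoretically, and because the trinomial is irreducible it generates a prime ideal of the right codimension, forcing $I_k$ to equal the principal ideal it generates. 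As the statement is asserted to follow from the general reducible-case result (Theorem~\ref{theorem:cox-rings-reducible}), the cleanest route is simply to invoke that theorem and then perform the bookkeeping above to match its single trinomial with the explicit form stated here. The main obstacle is the purely combinatorial exponent bookkeeping: one must check that with $n=2k$ the $t_i$-exponents cancel uniformly across all three monomials and that the residual powers of $T_i$ collapse to $\prod_{i=1}^{k-1}T_i^{k-i}$, with the index cut off correctly at $k-1$, rather than misreading the $\gcd(n,i)$ terms.
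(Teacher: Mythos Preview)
Your overall strategy is exactly the right one and matches the paper's (implicit) reasoning: the corollary is simply the specialization of the general trinomial from Theorem~\ref{theorem:cox-rings-reducible} (see also the Remark following it) to the dihedral even case, obtained by homogenising $Z_1^2 - Z_2^2 - 4Z_3^k$ with the $T$-variables. However, two of your computations are wrong as written and would derail the argument if taken at face value.

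First, your claim that the $t_i$-exponents on $A^2$, $B^2$ and $C^k$ coincide is false. You have exponent $2ik/\gcd(n,i)$ on $Z_1^2$ and $Z_2^2$, but $kn/\gcd(n,i) = 2k^2/\gcd(n,i)$ on $Z_3^k$; these agree only for $i=k$. The discrepancy is $2k(k-i)/\gcd(n,i)$, and it is precisely this that forces you to multiply $Z_3^k$ by $T_i^{k-i}$ (since $\kappa(T_i) = t_i^{-n/\gcd(n,i)} = t_i^{-2k/\gcd(n,i)}$) for each $i<k$. That is where the factor $\prod_{i=1}^{k-1}T_i^{k-i}$ actually comes from, and why the product stops at $k-1$; it does not appear as a leftover after a cancellation that never happened. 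Your paragraph is internally inconsistent on this point.

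Second, your dimension count is off: in the even case there are $k+2$ junior conjugacy classes (namely $[\rho],\ldots,[\rho^{k-1}],[\rho^k],[\eps],[\eps\rho]$), so $\Cl(X)\simeq\bZ^{k+2}$ and $\dim\Spec\cR(X)=3+(k+2)=k+5$, not $k+3$. With the correct value the codimension in $\bC^{k+6}$ is indeed~$1$, and then your hypersurface argument (principal prime ideal in a UFD, irreducible trinomial) goes through. With $k+3$ it would not.

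Fix these two points and the proof is complete and essentially identical to the paper's intended derivation.
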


\subsubsection{The Mov cone and the set of all crepant resolutions}
The matrix of weights of the Picard torus action on $\Spec \cR(X) \subset \bC^6$, given by values of monomial valuations listed above, subdivided into groups of columns corresponding to monomials in the triomial defining $\Spec \cR(X)$ and rescaled as in section~\ref{dihedral_odd_mov}, is

\begin{equation*}
U_k = \left( \begin{array}{cc|cc|ccccc|cc}
1 & 0 & 1 & 0 & 2 & -2 & 0 &  & 0 & 0 & 0\\
2 & 0 & 2 & 0 & 2 & 0 & -2 &  & 0 & 0 & 0 \\
\vdots & \vdots & \vdots & \vdots & \vdots & \vdots &\vdots & \ddots & \vdots & \vdots & \vdots \\
k-1 & 0 & k-1 & 0 & 2 & 0 & 0 &  & -2 & 0 & 0\\
k & 0 & k & 0 & 2 & 0 & 0 &  & 0 & -2 & 0\\
0 & 0 & 1 & -2 & 0 & 0 & 0 &  & 0 & 0 & 1 \\
1 & -2 & 0 & 0 & 0 & 0 & 0 &  & 0 & 0 & 1
\end{array}
\right)
\end{equation*}

Now we describe $\Mov(X) \subset N^{1}(X) = \Cl(X)\otimes_{\bZ}\bR\simeq \bR^{k+2}$. We also use the following vectors:
\begin{align}\label{eq_important_vec_even}
  \begin{split}
  &q_1 = (0,\ldots,0,1,0),\qquad q_{2} = (0,\ldots,0,0,1), \qquad q_{3} = (0,\ldots,0,1,1)\\
    &u_{i} = (1,2,\ldots, i-1,i,\ldots,i,0,0) \hbox{ for } i\in \{1,\ldots,k\},\\
    &v_{i}=u_{i}+q_1 = (1,2,\ldots, i-1,i,\ldots, i,1,0) \hbox{ for } i\in \{1,\ldots,k\},\\
&v_{i}'=u_{i} + q_2 = (1,2,\ldots, i-1,i,\ldots, i,0,1) \hbox{ for } i\in \{1,\ldots,k\},\\
&w_{i}  = 2u_{i}  + q_{3} = (2,4,\ldots, 2i-2, 2i,\ldots, 2i, 1,1) \hbox{ for } i\in \{1,\ldots,k\}.
 \end{split}
\end{align}

The next two statements can be proved based on the same ideas as for $n$ odd, see~\ref{proposition:mov-odd} and~\ref{proposition:GIT-subdivision-odd}, but they require more cases to check, hence we skip the details.

\begin{proposition}\label{proposition:mov-even}
The movable cone $\Mov(X)$ is spanned by rays $q_1, q_2, u_1,\ldots,u_k$ and defined by inequalities
\begin{equation*}\label{eqn:mov-inequalities-even}
\{(a_{1},\ldots,a_{k+2})\ : \ a_{k+1}\ge 0,\ a_{k+2}\ge 0, \ 2a_{1}\ge a_{2},\ a_{k}\ge a_{k-1},\ 2a_{i}\ge a_{i-1}+a_{i+1}, \ 1< i< k\}.
\end{equation*}
\end{proposition}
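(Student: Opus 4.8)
The plan is to follow the proof of Proposition~\ref{proposition:mov-odd} almost verbatim, splitting the argument into two independent parts: first that the two descriptions of the candidate cone (call it $C_k$) coincide, and then that $C_k$ equals $\Mov(X)$ computed as an intersection of facet images through Proposition~\ref{prop_mov_intersection}. For the first part I would check that every ray satisfies every inequality---trivially for $q_1,q_2$, and for $u_i$ because the sequence $\min(1,i),\ldots,\min(k,i)$ is non-decreasing and concave---and conversely that any $a=(a_1,\ldots,a_{k+2})$ meeting the inequalities is a non-negative combination of the rays. The point is that the inequalities $2a_1\ge a_2$, $a_k\ge a_{k-1}$ and $2a_i\ge a_{i-1}+a_{i+1}$ say precisely that the increments $d_l=a_l-a_{l-1}$ (with $a_0:=0$) satisfy $d_1\ge\cdots\ge d_k\ge 0$, so that $a=\sum_{i=1}^{k}(d_i-d_{i+1})u_i+a_{k+1}q_1+a_{k+2}q_2$ with $d_{k+1}:=0$ exhibits the required decomposition; this settles the equality of the two descriptions of $C_k$.

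For the identification with $\Mov(X)$ I would use that, by Proposition~\ref{prop_mov_intersection}, $\Mov(X)$ is the intersection over the $k+6$ facets of $\gamma\subset\bR^{k+6}$ of their images under $U_k$, each image being the cone on all but one column of $U_k$. Reading the columns off the matrix gives $Z_1=u_k+q_2$, $Z_2=u_k+q_1$, $Z_3=2u_1$, $Z_4=q_3=q_1+q_2$, together with $T_{\eps}=-2q_1$, $T_{\eps\rho}=-2q_2$ and $T_i=-2e_i$ (notation as in~\eqref{eq_important_vec_even}). To prove $C_k\subseteq\Mov(X)$ I would show each ray lies in every facet image, i.e.\ admits a non-negative presentation surviving the deletion of any one column. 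The decisive observation is that each ray has two presentations with essentially disjoint supports: $u_i=u_k+\sum_{l>i}\tfrac{l-i}{2}T_l=\tfrac{i}{2}Z_3+\sum_{l<i}\tfrac{i-l}{2}T_l$ (the first built from the top using $u_k=Z_1+\tfrac12 T_{\eps\rho}=Z_2+\tfrac12 T_{\eps}$, the second built from below using $Z_3$), and likewise $q_1=Z_4+\tfrac12 T_{\eps\rho}=Z_2+\sum_{l=1}^{k}\tfrac{l}{2}T_l$ and $q_2=Z_4+\tfrac12 T_{\eps}=Z_1+\sum_{l=1}^{k}\tfrac{l}{2}T_l$. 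Since the two presentations of a given ray never rely on the same single column, no single deletion can destroy both, which yields the inclusion.

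For the reverse inclusion $\Mov(X)\subseteq C_k$ I would exhibit, for each defining inequality, one facet whose image lies in the corresponding half-space; since $\Mov(X)$ sits inside every facet image, it then satisfies all the inequalities. Evaluating the supporting functional of each inequality on the columns of $U_k$ shows that exactly one column is strictly negative, so deleting it produces a facet with the desired containment: delete $T_{\eps}$ for $a_{k+1}\ge0$, delete $T_{\eps\rho}$ for $a_{k+2}\ge0$, delete $T_1$ for $2a_1\ge a_2$, delete $T_k$ for $a_k\ge a_{k-1}$, and delete $T_i$ for $2a_i\ge a_{i-1}+a_{i+1}$ with $1<i<k$. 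In every case the remaining columns take non-negative values on the functional, so their cone lies in the half-space.

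I expect the genuine obstacle to be the inclusion $C_k\subseteq\Mov(X)$: it is not a single computation but a bookkeeping over all $(k+2)(k+6)$ pairs (ray, deleted column), and one must keep enough alternative presentations in hand to cover every deletion. The complementary ``top'' and ``bottom'' presentations above are exactly what makes this bookkeeping finite and uniform, but the number of cases is what inflates a fully detailed write-up. The reverse inclusion, by contrast, is a clean one-functional-per-inequality check, and the equality of the ray and inequality descriptions is the short concavity argument. This is precisely why, as in the odd case, I would present the combinatorial scheme and omit the exhaustive case list.
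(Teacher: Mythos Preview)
Your proposal is correct and follows exactly the scheme the paper indicates: the paper gives no separate argument in the even case but says to repeat the proof of Proposition~\ref{proposition:mov-odd} with more cases, and that is precisely what you do---the concavity argument for the equality of ray and inequality descriptions, the ``two disjoint presentations'' trick for $C_k\subseteq\Mov(X)$, and the one-functional-per-inequality facet choice for $\Mov(X)\subseteq C_k$. Your write-up is in fact more detailed than the paper's, which omits all combinatorial verification in both the odd and even cases.
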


\begin{proposition}\label{proposition:GIT-subdivision-even}
There are $(k+1)^2$ GIT chambers of $\Mov(X)$:
\begin{align*}
  & cone(q_1,q_3,v_1,\ldots,v_k),\ cone(q_2,q_3,v_{1}',\ldots,v_{k}'),\\
  & cone(u_1,\ldots,u_{k}, v_k,v_k'),\ cone(q_3,v_k,v_k',w_1,\ldots,w_{k-1}),\\
  & cone(q_3, w_{1},\ldots,w_{i}, v_i,\ldots,v_k) \hbox{ for } i\in \{1,\ldots, k-1\},\\
  & cone(q_3, w_{1},\ldots,w_{i},v_i',\ldots,v_k') \hbox{ for } i\in \{1,\ldots, k-1\},\\
  & cone(u_1,\ldots,u_i, w_i,\ldots,w_{k-1}, v_k,v_k') \hbox{ for } i\in \{1,\ldots, k-1\},\\
  & cone (u_1,\ldots,u_i, w_i,\ldots,w_{j}, v_{j},\ldots,v_k) \hbox{ for } i, j\in \{1,\ldots, k-1\},\ i \leq j,\\
  & cone (u_1,\ldots,u_i, w_i,\ldots,w_{j}, v_{j}',\ldots,v_k') \hbox{ for } i, j\in \{1,\ldots, k-1\},\ i \leq j.
\end{align*}
where all the rays are as defined in~\eqref{eq_important_vec_even}.
\end{proposition}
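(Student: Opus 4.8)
The plan is to follow the three-part strategy used in the proof of Proposition~\ref{proposition:GIT-subdivision-odd}, adapted to the richer combinatorics of the even case. First I would establish the analogue of Lemma~\ref{lemma:maximal-a-faces-odd}, classifying the projected $I_k$-faces of maximal dimension $k+2$. By Definition~\ref{def_aface} these correspond to subsets of the $k+6$ columns of $U_k$ that span $\bR^{k+2}$ and for which, after setting the complementary variables to zero, the trinomial of Corollary~\ref{dihedral_even_eq} can still be made to vanish with the remaining variables nonzero; concretely this means we may kill $0$, $1$, or all $3$ of its monomials, but never exactly $2$. The new feature compared with the odd case is that the first monomial $Z_1^2T_{\eps\rho}$ now involves two variables, just as the second monomial $Z_2^2T_{\eps}$ does, so each of these monomials can be killed by dropping either of its two variables. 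This multiplies the number of face types relative to~\eqref{proj_faces_1}--\eqref{proj_faces_5}, but the enumeration proceeds in exactly the same spirit.

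Next I would prove that the listed cones cover $\Mov(X)=cone(q_1,q_2,u_1,\ldots,u_k)$ from Proposition~\ref{proposition:mov-even}. Writing a general point as $v = a\,q_1 + b\,q_2 + \sum_{i=1}^k c_i u_i$ with $a,b,c_i\ge 0$, I would assign $v$ to a chamber by a case analysis on the relative sizes of $a$, $b$, and the partial sums $c_i+\cdots+c_k$, exactly as the comparison $a\ge a_1+\cdots+a_k$ and its refinement (choosing $j$ maximal with $a<a_j+\cdots+a_k$) did in the odd proof. The difference is that here there are two independent reflection directions $q_1,q_2$ together with $q_3=q_1+q_2$ and the doubling relation $w_i=2u_i+q_3$ from~\eqref{eq_important_vec_even}, so the case split is genuinely two-dimensional; this is precisely what produces the $(k+1)^2$ chambers arranged as a grid rather than the chain $\sigma_0,\ldots,\sigma_k$ of the odd case. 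Re-expressing $u_i,v_i,v_i',w_i$ via~\eqref{eq_important_vec_even} then gives explicit positive combinations witnessing membership in each chamber, and a dimension count confirms that the cones are full-dimensional and meet only along boundaries.

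The third and technically heaviest step is to show that no maximal projected $I_k$-face subdivides a listed cone, and conversely that any two distinct listed cones are separated by some projected $I_k$-face. As in the odd case this reduces, for each pairing of a face type from the first step with a chamber, either to checking that all rays of the chamber lie on the face or to exhibiting a separating hyperplane; such hyperplanes can be found by computing small cases and then guessing the general linear form, as the odd proof did with $a_{i+1}-a_i=a_{k+1}$ and $a_i+a_j=a_{i+1}+a_{j-1}$. Here I expect analogous forms involving the two extra coordinates $a_{k+1}$ and $a_{k+2}$ attached to $q_1$ and $q_2$. Carrying this out verifies simultaneously that each listed cone is a single GIT chamber and that the $(k+1)^2$ cones are pairwise distinct, so by Proposition~\ref{prop_mov_intersection} they exhaust $\Mov(X)$.

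The main obstacle I anticipate is organizational rather than conceptual: the chambers are now indexed by a two-dimensional grid (a pair $(i,j)$ together with a choice of the $v$- or $v'$-branch), so the number of (face type, chamber) pairs to check grows quadratically, and one must set up the separating hyperplanes uniformly across branches so that the primed and unprimed families are handled together. Establishing this \emph{symmetry} --- which reflects the interchange of the two reflection classes $\eps$ and $\eps\rho$ --- is what keeps the verification finite and reduces it to a small number of representative cases, just as the authors reduce the odd argument to presenting a single case.
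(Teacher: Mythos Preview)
Your proposal is correct and follows exactly the approach the paper indicates: the authors state that the proof proceeds ``based on the same ideas as for $n$ odd, see~\ref{proposition:mov-odd} and~\ref{proposition:GIT-subdivision-odd}, but they require more cases to check'' and omit the details. Your three-step plan (classify maximal projected $I_k$-faces, verify the cones cover $\Mov(X)$ via a two-parameter case split, then check non-subdivision and separation via explicit hyperplanes) is precisely the template of the odd proof adapted to the even combinatorics, and your observation about the $\eps \leftrightarrow \eps\rho$ symmetry is the right organizing principle for keeping the casework manageable.
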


\begin{remark}
Similarly as in section~\ref{section_central_fibre_odd} for $n$ odd, one can analyse the structure of the central fibre of crepant resolutions and the flops between different resolutions for $n= 2k$. Here $(k+1)^2$ crepant resolutions can be pictured in the form of an isosceles triangle with $2k+1$ resolutions at the base and the number of resolution in consecutive rows parallel to the base decreasing by~2. Flops can be performed between adjacent resolutions in rows and in columns. The $G$-$\Hilb$ resolution can be identified based on the proof of~\cite[Thm~5.2]{NdCS} as the central resolution on the base of the triangle.
\end{remark}

\section{Reducible representations}\label{section_reducible}

Let $G\subset \SL(3,\bC)$ be a finite non-abelian subgroup such that $\bC^{3}$ decomposes into a 2-dimensional and 1-dimensional representation of~$G$. Equivalently, we may start from the 2-dimensional representation, that is $\ovl{G}\subset \GL(2,\bC)$ isomorphic to $G$, and construct $G$ as the set of $3\times 3$ matrices of the following form, for all $g$ in $\ovl{G}$:
\begin{equation*}
\begin{pmatrix}
g & 0 \\
0 & \det(g)^{-1}
\end{pmatrix} 
\end{equation*}

Let $X\to \bC^{3}/G$ be a projective crepant resolution. In this section we find the presentation of the Cox ring $\cR(X)$ using Theorem~\ref{theorem_valuation_lifting} and apply the result to describe the structure of the set of crepant resolutions for two examples.

We recall the classification result for finite subgroups of $\GL(2,\bC)$, see e.g.~\cite[Sect.~3]{CohenComplex}. We use the following notation. Let $G_1,G_2$ be matrix groups with normal subgroups $H_1, H_2$, and $\varphi$ be the isomorphism of $G_1 /H_1$ and $G_2/H_2$. Set $G_1 \times_{\varphi} G_2 = \{(g_1,g_2) \in G_1 \times G_2 \ | \ \varphi(g_1 H_1) = g_2 H_2\}$. Then by $(G_1 \mid H_1;\ G_2 \mid H_2)$ we understand
$\psi(G_1 \times_{\varphi} G_2)$, where $\psi$ is the multiplication map.  By~$\mu_{d}$ we denote the group of $d$-th roots of unity; we use the embedding $\mu_d \cdot I_n \subset \GL(n,\bC)$.

\begin{lemma}\label{lemma:classification-gl2c}
A finite group $\ovl{G} \subset \GL(2,\bC)$ is of the form $(\mu_{wd}\mid \mu_{d}; H\mid K)$ for some integers $w,d$ and a normal subgroup $K\subset H = (\bC^{*}\cdot \ovl{G})\cap \SL(2,\bC)$.
\end{lemma}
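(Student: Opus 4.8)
The statement asserts that any finite $\ovl{G} \subset \GL(2,\bC)$ can be written in the "mixed product" notation as $(\mu_{wd}\mid \mu_d;\ H\mid K)$, where $H = (\bC^*\cdot\ovl{G})\cap\SL(2,\bC)$. My approach would be to unwind the definition of the $(G_1\mid H_1;\ G_2\mid H_2)$ construction and produce the required data $w,d,K$ from the relationship between $\ovl{G}$, its scalar part, and its intersection with $\SL(2,\bC)$.

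**First steps: identifying the scalars and the $\SL$-part.**
First I would consider the determinant homomorphism $\det\colon\ovl{G}\to\bC^*$. Its image is a finite cyclic group $\mu_{wd}$ for some integer $wd$, and its kernel is $\ovl{G}\cap\SL(2,\bC)$. Next I would look at the group of scalar matrices contained in $\ovl{G}$, namely $\ovl{G}\cap(\bC^*\cdot I_2)$; being a finite subgroup of the scalars it equals $\mu_d\cdot I_2$ for some $d$ (this $d$ will be the one in the statement). The candidate for $G_1$ is then $\mu_{wd}$ and for $H_1$ is $\mu_d$, both viewed via the standard embeddings $\mu_{wd}\cdot I_2,\ \mu_d\cdot I_2$; the candidate for $G_2$ is $H=(\bC^*\cdot\ovl{G})\cap\SL(2,\bC)$, which is a finite group since $\ovl{G}$ is finite and determinants lie in $\mu_{wd}$. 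The subgroup $K$ will be forced on us as $K = H\cap\ovl{G}$, or equivalently the $\SL$-part produced by intersecting $\ovl{G}$ with $\SL(2,\bC)$ after absorbing scalars; I would check that it is normal in $H$.

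**The heart of the argument: matching quotients.**
The essential content is to verify that $\ovl{G}$ is exactly the image under multiplication $\psi$ of the fibre product $\mu_{wd}\times_\varphi H$ for an appropriate isomorphism $\varphi\colon \mu_{wd}/\mu_d\xrightarrow{\sim} H/K$. Here the key observation is that every element $g\in\ovl{G}$ can be written, after multiplying by a suitable scalar $\lambda\in\bC^*$, so that $\lambda g\in\SL(2,\bC)$: indeed one chooses $\lambda$ with $\lambda^2 = \det(g)^{-1}$, so $\lambda g \in H$. This exhibits $g$ as a product (scalar)$\cdot$($\SL$-element), which is precisely the form $\psi(g_1,g_2)$ with $g_1\in\mu_{wd}$ and $g_2\in H$. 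The compatibility condition defining the fibre product — that $g_1$ and $g_2$ have matching classes under $\varphi$ — should come out of comparing how the scalar ambiguity (an element of $\mu_d$, since $\lambda$ is determined only up to a sign, and more generally the allowable scalars are controlled by $d$) acts, thereby forcing the index $w = |\mu_{wd}/\mu_d|$ to measure exactly the discrepancy between $\det(\ovl{G})$ and the scalars already present.

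**Expected main obstacle.**
I expect the delicate point to be the careful bookkeeping of scalars: making sure the two integers $w$ and $d$ are defined consistently and that the isomorphism $\varphi\colon\mu_{wd}/\mu_d\to H/K$ is well-defined and that $\psi$ restricted to the fibre product is injective with image precisely $\ovl{G}$ (not larger). In particular one must confirm that $H$ and $K$ are as claimed — that $K\subset H$ is normal and that $H/K\cong\mu_{wd}/\mu_d$ — rather than merely producing \emph{some} decomposition. Since this is a known classification result cited to~\cite[Sect.~3]{CohenComplex}, I would most likely verify these identifications directly for the concrete $H=(\bC^*\cdot\ovl{G})\cap\SL(2,\bC)$ and $K$, and otherwise defer the structural fibre-product statement to the cited reference rather than reprove the full classification of finite subgroups of $\GL(2,\bC)$.
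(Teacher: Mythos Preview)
The paper does not prove this lemma; it records it as a known classification result and cites \cite[Sect.~3]{CohenComplex}. Your closing remark --- that you would ultimately defer the structural fibre-product statement to that reference --- is therefore exactly what the paper does.

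Your sketched identifications, however, contain a concrete slip. You take $\mu_{wd}$ to be the image of $\det\colon \ovl{G}\to\bC^*$, but in the construction $(\mu_{wd}\mid\mu_d;\ H\mid K)$ the factor $\mu_{wd}$ is embedded as \emph{scalar} matrices, and a product $\lambda h$ with $\lambda\in\mu_{wd}$ and $h\in H\subset\SL(2,\bC)$ has $\det(\lambda h)=\lambda^{2}$. Hence the image of the determinant on $(\mu_{wd}\mid\mu_d;\ H\mid K)$ is the group of squares $\mu_{wd}^{\,2}$, not $\mu_{wd}$ itself; in particular there is no a~priori reason for your $d$ (the order of $\ovl{G}\cap\bC^*I_2$) to divide your proposed $wd$ (the order of $\det(\ovl{G})$). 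The scalar group one actually needs consists of those $\lambda$ with $\lambda^{-1}g\in\SL(2,\bC)$ for some $g\in\ovl{G}$, i.e.\ the square roots of elements of $\det(\ovl{G})$, and one must then sort out which of these, paired with which elements of $H$, recover exactly~$\ovl{G}$. You correctly flagged the scalar bookkeeping as the delicate point; this is precisely where it bites, and it is why one leans on Cohen's classification rather than rederiving it here.
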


\begin{corollary}\label{corollary:classification-gl2c}
For a finite subgroup $\ovl{G} \subset \GL(2,\bC)$ we have the equality $[\ovl{G},\ovl{G}] = [H,H]$, hence also $\bC[x,y]^{[\ovl{G},\ovl{G}]} = \bC[x,y]^{[H,H]}$. Moreover, an element $p\in \bC[x,y]^{[\ovl{G},\ovl{G}]}$ homogeneous with respect to the degree grading is homogeneous with respect to the $\Ab(\ovl{G})$-action if and only if it is homogeneous with respect to the $\Ab(H)$-action.
\end{corollary}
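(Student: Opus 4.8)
Looking at Corollary~\ref{corollary:classification-gl2c}, I need to prove three related claims about a finite subgroup $\ovl{G} \subset \GL(2,\bC)$ with $H = (\bC^* \cdot \ovl{G}) \cap \SL(2,\bC)$.

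Let me understand the setup from Lemma~\ref{lemma:classification-gl2c}. We have $\ovl{G} = (\mu_{wd} \mid \mu_d; H \mid K)$. This notation means $\ovl{G}$ is the image under the multiplication map $\psi$ of the fiber product $\mu_{wd} \times_\varphi H$ where $\varphi: \mu_{wd}/\mu_d \to H/K$ is an isomorphism.

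**Claim 1:** $[\ovl{G}, \ovl{G}] = [H, H]$.

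My approach: First, $H \subset \bC^* \cdot \ovl{G}$, and elements of $\bC^* \cdot \ovl{G}$ are scalar multiples of elements of $\ovl{G}$. Since commutators kill scalars (as scalars are central), $[\bC^* \cdot \ovl{G}, \bC^* \cdot \ovl{G}] = [\ovl{G}, \ovl{G}]$. Now $H$ is a subgroup of $\bC^* \cdot \ovl{G}$, so $[H,H] \subseteq [\bC^* \cdot \ovl{G}, \bC^* \cdot \ovl{G}] = [\ovl{G}, \ovl{G}]$.

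For the reverse inclusion, I need $[\ovl{G}, \ovl{G}] \subseteq [H, H]$. The key structural fact is that $H$ contains a significant part of $\ovl{G}$: concretely, $SL \cap \ovl{G}$-type elements. Every element of $\ovl{G}$ can be written as a scalar times an element of $H$ (since $\ovl{G} \subseteq \bC^* \cdot H$ follows from the fiber product structure — each $g \in \ovl{G}$ comes paired with a scalar whose class determines $g$'s coset in $H/K$). Writing $g = \lambda_g h_g$ with $h_g \in H$, a commutator $[g_1, g_2] = [\lambda_1 h_1, \lambda_2 h_2] = [h_1, h_2] \in [H, H]$ since scalars are central. This gives $[\ovl{G}, \ovl{G}] \subseteq [H, H]$.

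**Claim 2:** $\bC[x,y]^{[\ovl{G}, \ovl{G}]} = \bC[x,y]^{[H,H]}$. This is immediate from Claim 1: equal groups have equal invariant rings.

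**Claim 3 (the main obstacle):** For $p \in \bC[x,y]^{[\ovl{G},\ovl{G}]}$ homogeneous in degree, $p$ is $\Ab(\ovl{G})$-homogeneous iff it is $\Ab(H)$-homogeneous.

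The plan is to compare the two abelianization actions. An element $p$ is $\Ab(\ovl{G})$-homogeneous means it is a common eigenvector for all $g \in \ovl{G}$ acting on the invariant ring; similarly for $H$. The essential point is that $\ovl{G}$ and $H$ generate the same group modulo $[H,H] = [\ovl{G},\ovl{G}]$ up to scalars, and scalars $\mu_{wd} \cdot I$ act on a degree-$d$ homogeneous polynomial simply by multiplication by a root of unity — so a degree-homogeneous $p$ is automatically a scalar eigenvector. Thus adjoining scalars does not change the eigenvector condition among degree-homogeneous polynomials.

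Concretely, I would argue: since $\bC^* \cdot \ovl{G} = \bC^* \cdot H$ (both equal $\bC^* \cdot H$ by the fiber product structure, as $\ovl{G} \subseteq \bC^* \cdot H$ and $H \subseteq \bC^* \cdot \ovl{G}$), the groups $\ovl{G}$ and $H$ differ only by scalars inside their common "saturation" $\bC^* \cdot H$. For a degree-homogeneous $p$, being a $g$-eigenvector for $g = \lambda h$ is equivalent to being an $h$-eigenvector, because $\lambda \cdot I$ scales $p$ by $\lambda^{\deg p}$. Hence the set of degree-homogeneous common eigenvectors is the same whether computed with respect to $\ovl{G}$ or $H$. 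The main subtlety to verify carefully is that the induced actions on the invariant ring $\bC[x,y]^{[H,H]}$ are well-defined diagonal actions (the commutator being normal in both $\ovl{G}$ and $H$), and that $\Ab$-homogeneity precisely means being a simultaneous eigenvector; once this dictionary is set up, the scalar argument closes the equivalence.
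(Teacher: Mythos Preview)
Your argument is correct and is exactly the verification the paper leaves implicit: the corollary is stated without proof, as an immediate consequence of the structure $\ovl{G} = (\mu_{wd}\mid \mu_d;\ H\mid K)$ in Lemma~\ref{lemma:classification-gl2c}. The key points you identify---that every element of $\ovl{G}$ is a scalar times an element of $H$, that scalars are central so commutators agree, and that scalars act on degree-homogeneous polynomials by a fixed character so the eigenvector conditions for $\ovl{G}$ and $H$ coincide---are precisely the content the paper is invoking.
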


A finite non-abelian group $H \subset \SL(2,\bC)$ is conjugate to one~of (see e.g.~\cite[Sect.~3]{CohenComplex}):

\begin{itemize}
\item binary dihedral group $\BD_{4q} =
\left\langle\left(\begin{smallmatrix}
0 & i\\ i & 0\end{smallmatrix}\right), \
\left(\begin{smallmatrix}\zeta_{n} & 0 \\ 0 & \zeta_{n}^{-1}\end{smallmatrix}\right)\right\rangle$ of order $4q,\ q\ge 2$,
\item binary tetrahedral group $\BT =
\left\langle \BD_8,\ \frac{1}{\sqrt{2}}
\left(\begin{smallmatrix}\zeta_{8} & \zeta_{8}^{3}\\ \zeta_{8} & \zeta_{8}^{7} \end{smallmatrix}\right) \right\rangle$ of order 24, 
\item binary octahedral group $\BO = 
\left\langle \BT,\ \left(\begin{smallmatrix}\zeta_{8}^3 & 0\\ 0 & \zeta_{8}^{5} \end{smallmatrix}\right) \right\rangle$ of order 48,

\item binary icosahedral group $\BI = \left \langle\frac{1}{\sqrt{5}}\left(\begin{smallmatrix}\zeta_{5}^{4}-\zeta_{5} & \zeta_{5}^{2} - \zeta_{5}^{3}\\ \zeta_{5}^2 - \zeta_{5}^3 & \zeta_{5}-\zeta_{5}^4\end{smallmatrix}\right), \ \frac{1}{\sqrt{5}}\left(\begin{smallmatrix}\zeta_{5}^{2}-\zeta_{5}^{4} & \zeta_{5}^{4} - 1\\ 1 - \zeta_{5} & \zeta_{5}^3-\zeta_{5}\end{smallmatrix}\right) \right\rangle$ of order 120.
\end{itemize}

Consider the ring of invariants $\bC[x,y,z]^{[G,G]} =  \bC[x,y]^{[\ovl{G}, \ovl{G}]}[z]$. The generators for $\ovl{G} \subset \SL(2,\bC)$, given below, can be found e.g. in~\cite{MillerBlichfeldtDickson}. Their homogeneity can be verified directly.

\begin{lemma}\label{lemma:invariants-gl2c}
For a finite group $\ovl{G} \subset \GL(2,\bC)$ the ring of invariants $\bC[x,y]^{[\ovl{G}, \ovl{G}]}$ is generated by three elements $p_{1}, p_{2}, p_{3}$, homogeneous with respect to both the standard degree grading and the $\Ab(\ovl{G})$-action, satisfying a trinomial relation.

{\scriptsize
\begin{center}
{\renewcommand{\arraystretch}{1.5}
\begin{tabular}{c|c|c}
$H$ & generators $p_{1}, p_{2}, p_{3}$ & relation \\
\hline
$\BD_{4q}$ & $x^{q}+y^{q}, \ x^{q}-y^{q}, \ xy$ & $Z_{1}^{2} - Z_{2}^{2} - 4Z_{3}^{q}$\\
\hline
$\BT$  & $x^{4} + y^4 + \sqrt{-12}x^{2}y^{2},\ x^{4} + y^4 -\sqrt{-12}x^{2}y^{2},\ x^{5}y-xy^{5}$ & $Z_{1}^3 - Z_{2}^3 - 12(\zeta_{3}-\zeta_{3}^{2})Z_{3}^{2}$ \\
\hline
$\BO$ & $x^{5}y-xy^{5},\ x^{8} + 14x^4y^4 + y^8,\ x^{12} - 33x^8y^4 - 33x^4y^8 + y^{12}$  & $108Z_{1}^4 - Z_{2}^3 + Z_{3}^{2}$ \\
\hline
$\BI$ & $\begin{array}{c}x^{11}y + 11x^6y^6 - xy^{11},\\
x^{20}-228x^{15}y^5+494x^{10}y^{10}+228x^5y^{15}+y^{20},\\
x^{30}+522x^{25}y^5-10005x^{20}y^{10}-10005x^{10}y^{20}-522x^5y^{25}+y^{30}
\end{array}$
& $1728Z_{1}^5+Z_{2}^3-Z_{3}^2$
\end{tabular}
}
\end{center}
}
\end{lemma}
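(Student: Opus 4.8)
The plan is to use Corollary~\ref{corollary:classification-gl2c} to replace $[\ovl{G},\ovl{G}]$ by $[H,H]$, reducing the statement to a computation of $\bC[x,y]^{[H,H]}$ for the four non-abelian types $H = \BD_{4q}, \BT, \BO, \BI$, and then to appeal to the classical invariant theory of binary polyhedral groups. Since $\ovl{G}$ is non-abelian we have $[\ovl{G},\ovl{G}] = [H,H]\neq 1$, so $H$ is indeed one of these four; moreover Corollary~\ref{corollary:classification-gl2c} lets me test $\Ab(\ovl{G})$-homogeneity at the level of $\Ab(H)$. The first step is to identify the commutator subgroup in each case by a direct calculation with the displayed generators: $[\BD_{4q},\BD_{4q}]$ is the cyclic diagonal group $\langle \diag(\zeta_q,\zeta_q^{-1})\rangle \iso \bZ/q$, while $[\BT,\BT] = \BD_8$, $[\BO,\BO] = \BT$, and $\BI$ is perfect so $[\BI,\BI] = \BI$. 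Conveniently, in the three exceptional cases the commutator subgroup is exactly the subgroup appearing in the generator presentations listed just before the lemma.

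The second step is to produce the ring of invariants of each commutator subgroup. I rely on the classical fact that for a finite $\Gamma \subset \SL(2,\bC)$ the ring $\bC[x,y]^{\Gamma}$ is a hypersurface, minimally generated by three homogeneous elements with a single relation (the coordinate ring of a du~Val singularity); the four cases give types $A_{q-1}$, $D_4$, $E_6$ and $E_8$. The binary dihedral case serves as the model and can be done by hand: the monomials $x^ay^b$ with $a \equiv b \pmod q$ span the invariants of $\langle \diag(\zeta_q,\zeta_q^{-1})\rangle$, so the ring is $\bC[x^q,y^q,xy]$ with relation $x^q y^q = (xy)^q$, and rewriting it in the eigenvector generators $p_1 = x^q+y^q$, $p_2 = x^q-y^q$, $p_3 = xy$ yields $p_1^2 - p_2^2 - 4p_3^q = 0$. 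For $H = \BT$, $\BO$ and $\BI$ I would take the generators from~\cite{MillerBlichfeldtDickson} verbatim, observe that their degrees make all three monomials of the claimed relation homogeneous of degree $\operatorname{lcm}$ of the generator degrees ($12$, $24$ and $60$ respectively), and verify the trinomial with its scalar coefficients by substituting the explicit polynomials and expanding.

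The third step is to check the two homogeneity statements. Degree homogeneity is immediate, since the chosen generators are homogeneous polynomials. For $\Ab$-homogeneity it suffices to show that each $p_i$ is a semi-invariant of $H$, i.e.\ an eigenvector for each of the two named generators of $H$; this is a direct computation---for $\BD_{4q}$, say, the diagonal generator scales $(p_1,p_2,p_3)$ by $(-1,-1,1)$ and the anti-diagonal one scales $p_1,p_2$ by a power of $i$ and $p_3$ by $-1$---and since these eigenvalues descend to characters of $\Ab(H)$, the $p_i$ are $\Ab(H)$-homogeneous, hence $\Ab(\ovl{G})$-homogeneous by Corollary~\ref{corollary:classification-gl2c}.

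The hardest part will be the exceptional cases $E_6$ and $E_8$: verifying that the three listed elements really form a generating set and satisfy exactly the displayed relation is not elementary, but follows from Klein's invariant theory, and the coefficient bookkeeping in expanding e.g.\ $1728Z_1^5 + Z_2^3 - Z_3^2$ for $\BI$ is delicate. Generation itself can be secured abstractly: knowing that $\bC[x,y]^{\Gamma}$ is a hypersurface of the predicted type, any three invariants of the correct degrees satisfying one nontrivial relation must be a minimal generating set, which reduces the exceptional cases to checking degrees and the single relation.
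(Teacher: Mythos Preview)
Your proposal is correct and follows essentially the same route as the paper: the paper itself gives no formal proof, merely noting (in the sentence preceding the lemma) that the generators ``can be found e.g.\ in~\cite{MillerBlichfeldtDickson}'' and that ``their homogeneity can be verified directly.'' You have simply unpacked this, correctly identifying the commutator subgroups $[\BD_{4q},\BD_{4q}]\iso\bZ/q$, $[\BT,\BT]=\BD_8$, $[\BO,\BO]=\BT$, $[\BI,\BI]=\BI$ so that the invariant rings are the du~Val hypersurfaces $A_{q-1}$, $D_4$, $E_6$, $E_8$, and invoking Corollary~\ref{corollary:classification-gl2c} for the $\Ab(\ovl{G})$-homogeneity.
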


Consider the ring homomorphism $\kappa\colon \bC[Z_1,Z_2,Z_3,Z_{4}]\to \bC[x,y,z]^{[G,G]}$ defined by $Z_{j}\mapsto p_{j}$ for $j=1,2,3$, and $Z_{4}\mapsto z$. Note that we have an $\Ab(H)$-action on $\bC[Z_{1},Z_{2},Z_{3},Z_{4}]$ induced from $\bC[x,y,z]$, where $H = (\bC^{*}\cdot \ovl{G})\cap \SL(2,\bC)$ acts on $x,y$ linearly and trivially on~$z$.

Let $g_{1},\ldots, g_{m}$ be representatives of all junior conjugacy classes of~$G$, $r_1,\ldots,r_m$ their orders and $\nu_{1},\ldots, \nu_{m}$ corresponding monomial valuations on~$\bC(x,y,z)$. We will also use monomial valuations $\wt{\nu}_{1},\ldots, \wt{\nu}_{m}$ on $\bC[Z_{1},Z_{2},Z_{3},Z_{4}]$ defined by setting $\wt{\nu}_{i}(Z_j) = \nu_{i}(\kappa(Z_{j}))$. 

We can now state the main result of this section. 

\begin{theorem}\label{theorem:cox-rings-reducible}
The Cox ring $\cR(X)$ of a crepant resolution $X \ra \bC^3/G$ is generated by $m+4$ elements
$$p_{j}\prod_{i=1}^{m}t_{i}^{\nu_{i}(p_{j})} \hbox{ for } j = 1,2,3,\qquad z\prod_{i=1}^{m}t_{i}^{\nu_{i}(z)}, \qquad t_{i}^{-r_{i}} \hbox{ for }  i=1,\ldots, m.$$
\end{theorem}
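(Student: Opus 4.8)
The goal is to identify a finite generating set for the Cox ring $\cR(X)$ of a crepant resolution, and the natural tool is Theorem~\ref{theorem_valuation_lifting}: once I produce the candidate generators, it suffices to verify the \emph{valuation lifting condition}. The candidate generators are exactly those dictated by the general procedure of Section~\ref{section_methods_Cox}, applied to the minimal generating set $\{p_1,p_2,p_3,z\}$ of $\bC[x,y,z]^{[G,G]} = \bC[x,y]^{[\ovl{G},\ovl{G}]}[z]$ coming from Lemma~\ref{lemma:invariants-gl2c}. So the whole theorem reduces to checking: for every $f \in \bC[x,y,z]^{[G,G]}$ homogeneous with respect to the $\Ab(G)$-action, there is a lift $\wt{f} \in \kappa^{-1}(f)$ with $\wt{\nu}_i(\wt{f}) \ge \nu_i(f)$ for all $i=1,\ldots,m$.

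\textbf{Key reduction via the single relation.} The decisive structural fact is that $\ker\kappa$ is generated by a \emph{single} trinomial relation (from Lemma~\ref{lemma:invariants-gl2c}, e.g.\ $Z_1^2 - Z_2^2 - 4Z_3^q$ in the binary dihedral case, and analogously for $\BT,\BO,\BI$, with $Z_4 = z$ not entering the relation). This means the fibre $\kappa^{-1}(f)$ is a coset of the ideal $(R)$, where $R$ is that trinomial: any two lifts differ by a multiple of $R$. The plan is first to fix one convenient lift $\wt{f}_0$ — obtained by writing $f$ as a polynomial in $p_1,p_2,p_3,z$ — and then to exploit the freedom of adding multiples of $R$ to raise the valuations $\wt{\nu}_i$ wherever $\wt{f}_0$ falls short. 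Because there is only one relation, the two monomials of $R$ that can be traded against each other share the same $\Ab(H)$-degree and (crucially) the same valuation under each $\wt{\nu}_i$ — I would verify this last point by a direct computation from the valuation tables, checking that $\wt{\nu}_i$ of each of the three monomials of $R$ agrees. Once the monomials of $R$ are \emph{isovaluative} in this sense, substituting one for another never decreases any $\wt{\nu}_i$, and a lift realizing $\wt{\nu}_i(\wt{f}) = \nu_i(f)$ for all $i$ simultaneously can be assembled.

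\textbf{Where the real work lies.} The heart of the argument is showing that the lift can be taken \emph{uniform} across all $i$, i.e.\ that no tension arises between raising $\wt{\nu}_i$ for one valuation and preserving it for another. The main obstacle is precisely this simultaneity: a priori the optimal representative in $\kappa^{-1}(f)$ for valuation $\nu_i$ might differ from the optimal one for $\nu_j$. The single-relation structure is what resolves this — since every element of the fibre differs by multiples of the isovaluative $R$, the valuation vector $(\wt{\nu}_1,\ldots,\wt{\nu}_m)$ is genuinely constrained, and I expect the argument to run by reducing $\wt{f}_0$ modulo $R$ into a canonical form (for instance, eliminating high powers of $Z_3$, or whichever variable appears with largest exponent in $R$) and then computing $\wt{\nu}_i$ of that canonical form directly against $\nu_i(f)$. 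The comparison $\wt{\nu}_i(\wt{f}) \ge \nu_i(f)$ then follows because $\nu_i(f)$ is by definition the minimum valuation over all monomials in the expansion of $f$, while the canonical lift's monomials map to terms of $f$ of valuation at least $\nu_i(f)$.

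\textbf{Finishing and specialization.} After establishing the valuation lifting condition in this uniform way — treating the $z = Z_4$ direction separately, which is easy since $z$ does not enter $R$ and its valuations are read directly from the tables — Theorem~\ref{theorem_valuation_lifting} immediately yields that the listed $m+4$ elements generate $\ovl{\Theta}(\cR(X))$. I would carry out the verification in the generality afforded by Corollary~\ref{corollary:classification-gl2c}, which lets me replace $\ovl{G}$ by $H \subset \SL(2,\bC)$ and so reduces the four cases $\BD_{4q}, \BT, \BO, \BI$ to a uniform treatment depending only on the shape of the trinomial $R$. The case-by-case bookkeeping — confirming isovaluativity of $R$'s monomials for each $H$ — is routine but must be done for all four families; I anticipate the binary dihedral case is the cleanest and the exceptional cases $\BT,\BO,\BI$, with their higher-degree relations, require the most careful checking that the monomials of $R$ carry equal valuations.
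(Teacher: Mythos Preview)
Your plan has the right overall shape---reduce to the valuation lifting condition of Theorem~\ref{theorem_valuation_lifting} and exploit that $\ker\kappa=(R)$ is principal---but the central technical claim is false and a key lemma is missing.

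\textbf{The isovaluativity claim fails.} You assert that the three monomials of the trinomial $R$ carry the \emph{same} value under each $\wt{\nu}_i$, and that this is what makes the substitution harmless. A single look at the dihedral table (Lemma~\ref{table:valuationsOdd}) refutes this: for $\nu_\rho$ one has $\wt{\nu}_\rho(Z_1^2)=\wt{\nu}_\rho(Z_2^2)=2n$ but $\wt{\nu}_\rho(Z_3^{\,n})=n^2$, and these differ for $n\ge 3$. The example $(\mu_4\mid\mu_2;\,\BD_{16}\mid\BD_8)$ in section~\ref{section_reducible} shows the same phenomenon: for $\nu_1$ the three monomials of $Z_1^2-Z_2^2-4Z_3^4$ have $\wt{\nu}_1$-values $8,12,8$, and for $\nu_4$ they have values $8,8,16$. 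What \emph{is} true, and what the paper uses, is the weaker statement that for every $i$ at least \emph{two} of the three monomials attain the minimum $\wt{\nu}_i$-value. This follows because the $\wt{\nu}_i$-minimal part $R_0$ is itself a relation among the leading forms $p_{1,0},p_{2,0},p_{3,0}$ and hence cannot be a single monomial. Consequently the remainder $R_1$ has at most one monomial, and for every other valuation $\wt{\nu}_{i'}$ one gets $\wt{\nu}_{i'}(R_1)\ge\wt{\nu}_{i'}(R_0)$. This pigeonhole on three monomials is the actual mechanism behind simultaneity; your stronger equality would make the argument trivial but is simply not available.

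\textbf{You are missing Lemma~\ref{lemma:relations-of-initials}.} Even with the corrected inequality, the inductive step requires knowing that whenever $\wt{\nu}_j(F)<\nu_j(f)$, the $\wt{\nu}_j$-minimal part $F_0$ lies in the ideal $(R_0)$. This is not automatic: a priori the leading forms $p_{1,0},p_{2,0},p_{3,0}$ could satisfy relations not coming from $R_0$. The paper isolates this as Lemma~\ref{lemma:relations-of-initials} and proves it by reducing (via Corollary~\ref{corollary:classification-gl2c}) to monomial valuations coming from elements of $H\subset\SL(2,\bC)$, where the valuation lifting property is already known from the surface case \cite[Thm~6.12]{CoxSurf}; one then runs the implication of Theorem~\ref{theorem_valuation_lifting} backwards to extract the needed statement about relations of initials. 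Your ``canonical form'' sketch does not address this point at all, and without it there is no way to write $F_0=PR_0$ and perform the subtraction $F\mapsto F-PR$ that drives the induction. The simultaneity issue you correctly flag is resolved precisely by combining this lemma with the two-out-of-three observation above, not by any global normal form.
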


The proof follows from Theorem~\ref{theorem_valuation_lifting} by showing the lifting condition for valuations $\nu_i$ and $\wt{\nu}_i$. We begin with introducing some more notation and proving a lemma, which actually implies that a single valuation~$\nu_i$ can be lifted from $\bC(x,y,z)$ to $\bC(Z_1,Z_2,Z_3,Z_{4})$ via~$\kappa$ (in the sense of Theorem~\ref{theorem_valuation_lifting}). Then we finish with the main part of the argument, which is to show that valuations can be lifted simultaneously.

If we choose a monomial valuation $\nu$ associated with a matrix~$g$ which acts diagonally in coordinates corresponding to ring variables then we may write any polynomial  $F$ as $F_{0}+F_{1}$, where all monomials in $F_{0}$ have valuation equal to $\nu(F)$ and $\nu(F_{1}) > \nu(F)$. 

Denote by~$R$ the trinomial relation between $p_1, p_2, p_3$, that is the generator of $\ker \kappa$. Choose a valuation $\nu := \nu_i$, and if necessary, make a linear change of coordinates such that the corresponding matrix $g_i$ acts diagonally. In these coordinates we may take minimal parts of generators with respect to $\nu_i$: $p_{1,0}, p_{2,0}, p_{3,0}$. We also decompose $R$ with respect to $\wt{\nu} := \wt{\nu}_i$: $R=R_0+R_1$. Note that $R_0$ is a relation between $p_{1,0}, p_{2,0}, p_{3,0}$. In this setting we state the following result.

\begin{lemma}\label{lemma:relations-of-initials}
Every relation between $p_{1,0},p_{2,0},p_{3,0}$ homogeneous with respect to $\Ab(H)$-action lies in the ideal generated by $R_{0}$ in $\bC[Z_{1},Z_{2},Z_{3}]$.
\end{lemma}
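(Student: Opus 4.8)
The plan is to analyze the three monomials of the trinomial $R = aZ_1^{d_1} - bZ_2^{d_2} - cZ_3^{d_3}$ (with the exact exponents read off from the table in Lemma~\ref{lemma:invariants-gl2c}) and to understand what happens when we pass to the $\nu_i$-minimal parts $p_{1,0},p_{2,0},p_{3,0}$. The first step is to observe that $p_1,p_2,p_3$ are algebraically dependent with transcendence degree exactly $2$ over $\bC$ (they are three elements of $\bC(x,y)$), so $\ker\kappa|_{\bC[Z_1,Z_2,Z_3]}$ is a height-one prime, generated by the single irreducible trinomial $R$. I would then argue the same bound holds for the initials: the $p_{j,0}$ are still three elements of the two-variable field $\bC(x,y)$ (the valuation $\nu_i$ is a monomial valuation in the $x,y,z$ coordinates, and after diagonalizing $g_i$ we read off $p_{j,0}$ as genuine polynomials in $x,y$), hence they too satisfy at least one relation, and the ideal of relations among them is again of height one. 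So to prove the lemma it suffices to show that (i) $R_0$ is a \emph{nonzero} relation among the $p_{j,0}$, and (ii) $R_0$ is irreducible, or more precisely that it generates a prime whose variety contains the image of $(p_{1,0},p_{2,0},p_{3,0})$ with the correct dimension.

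Concretely I would proceed as follows. Since $R$ vanishes on $(p_1,p_2,p_3)$, decomposing $R = R_0 + R_1$ by $\wt\nu$-value and substituting $Z_j \mapsto p_j$, the $\wt\nu$-minimal terms must cancel, which is exactly the statement that $R_0$ vanishes on $(p_{1,0},p_{2,0},p_{3,0})$; this gives (i) provided $R_0 \neq 0$, which holds because $R$ is a trinomial and a monomial valuation assigns each of its three monomials a single well-defined $\wt\nu$-value, so the minimal value is attained (on at least one, generically on two or three of the monomials) and $R_0$ collects precisely those. The key structural point is then to identify $R_0$: because $\wt\nu(Z_j^{d_j}) = d_j\,\nu_i(p_j)$ and the three monomials of $R$ are the pure powers $Z_1^{d_1}, Z_2^{d_2}, Z_3^{d_3}$, the minimal part $R_0$ is a sub-sum of these same pure-power monomials — so $R_0$ is either the full trinomial, a binomial of two pure powers, or (a scalar multiple of) a single pure power $Z_j^{d_j}$.

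The main obstacle, and the heart of the argument, is to rule out the degenerate case where $R_0$ is a single monomial $Z_j^{d_j}$: if that happened, then (i) would only say $p_{j,0}^{d_j} = 0$, i.e. $p_{j,0} = 0$, contradicting the definition of $p_{j,0}$ as the nonzero $\nu_i$-minimal part of $p_j$. Thus $R_0$ must involve at least two of the pure powers, i.e. it is a binomial or the full trinomial. In either of these cases $R_0$ is again irreducible as a polynomial in $\bC[Z_1,Z_2,Z_3]$ (a binomial $\alpha Z_a^{d_a} - \beta Z_b^{d_b}$ in distinct variables, or an irreducible trinomial), so the relation ideal of the $p_{j,0}$, being a height-one prime containing the irreducible $R_0$, is generated by $R_0$. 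Finally I would check the $\Ab(H)$-homogeneity statement: $\nu_i$ and $\kappa$ are compatible with the $\Ab(H)$-grading (each $p_j$ is $\Ab(H)$-homogeneous by Lemma~\ref{lemma:invariants-gl2c}, and taking the $\nu_i$-minimal part preserves the grading since $\nu_i$ is defined monomially in diagonalizing coordinates for $g_i\in G$), so any $\Ab(H)$-homogeneous relation lies in the homogeneous prime generated by $R_0$. I expect the delicate bookkeeping to be in matching the $\Ab(H)$-grading across $\kappa$ and in verifying that diagonalizing $g_i$ keeps the $p_{j,0}$ polynomial in $x,y$ alone (so that the transcendence-degree-two bound genuinely applies); once those are in place, the height-one/irreducibility argument closes the lemma.
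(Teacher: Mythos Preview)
Your argument has a genuine gap at the irreducibility step. You assert that when $R_0$ is a binomial $\alpha Z_a^{d_a} - \beta Z_b^{d_b}$ in distinct variables it is irreducible, but this is false whenever $\gcd(d_a,d_b) > 1$. Already for $H = \BD_{4q}$ with $R = Z_1^2 - Z_2^2 - 4Z_3^q$, take the valuation $\nu$ coming from the diagonal generator $\diag(\zeta,\zeta^{-1})$: in those coordinates $p_{1,0} = p_{2,0} = x^q$ and $p_{3,0} = xy$, so $\wt\nu(Z_1^2) = \wt\nu(Z_2^2) < \wt\nu(Z_3^q)$ and $R_0 = Z_1^2 - Z_2^2 = (Z_1-Z_2)(Z_1+Z_2)$. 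The full relation ideal of the $p_{j,0}$ is the height-one prime $(Z_1 - Z_2)$, which strictly contains $(R_0)$. The same phenomenon occurs for $\BT$ (the binomial $Z_1^3 - Z_2^3$ factors) and for $\BO$ (the binomial $108Z_1^4 + Z_3^2$ factors). So your conclusion ``the relation ideal, being a height-one prime containing the irreducible $R_0$, is generated by $R_0$'' simply does not follow.

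What saves the lemma is exactly the $\Ab(H)$-homogeneity hypothesis you treat as an afterthought: in each of the counterexamples above the extra generator (e.g.\ $Z_1 - Z_2$) is \emph{not} $\Ab(H)$-homogeneous, because $p_1$ and $p_2$ carry different $\Ab(H)$-characters. But turning this observation into a proof would require a case analysis of the $\Ab(H)$-grading against all possible binomial degenerations of $R$, which you have not done. The paper bypasses this entirely: it first reduces the valuation $\nu_i$ to one coming from an element of $H \subset \SL(2,\bC)$, then invokes the known fact (from the surface case, \cite[Thm~6.12]{CoxSurf}) that $p_1,p_2,p_3$ satisfy the valuation lifting property for such valuations, and finally uses an inductive peeling-off argument on the $\wt\nu$-filtration to show that any $\Ab(H)$-homogeneous relation $Q$ among the $p_{j,0}$ has $Q_0 \in (R_0)$, hence $Q \in (R_0)$. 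The essential input your approach is missing is this reduction to the already-established 2-dimensional Cox ring result.
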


\begin{proof}
First notice that $p_{1,0},p_{2,0},p_{3,0}$ depend only on the
inequality between values of $\nu$ on coordinates $u,v$ diagonalizing~$g_i$, that is whether $\nu(u) > \nu(v)$, $\nu(u) = \nu(v)$ or $\nu(u)<\nu(v)$. If $g_i = \zeta_dh$ for some $h \in H$ then the inequality between $\nu(u)$ and $\nu(v)$ is the same as the inequality between values on $u,v$ either of the valuation corresponding to $h$ or the one corresponding to $h^{-1}$. Thus without loss of generality we may assume that $p_{1,0}, p_{2,0}, p_{3,0}$ are determined by monomial valuation $\nu_h$ corresponding to $h \in H$.

Now we reduce to the case of Cox rings of minimal resolutions of $\bC^2/H$ for $H \subset \SL(2,\bC)$. By~\cite[Thm~6.12]{CoxSurf} we find that $p_1, p_2, p_3$ satisfy valuation lifting property. Here we reverse the implication from Theorem~\ref{theorem_valuation_lifting}: if $\phi_1,\ldots,\phi_s$ can be used for producing a generating set of the Cox ring then they satisfy the valuation lifting condition. This is because for any $\Ab(H)$-homogeneous $f \in \bC[x_1,\ldots,x_n]^{[H,H]}$ a presentation of $\ovl{\Theta}(f)$ in terms of $\ovl{\Theta}(\phi_1),\ldots,\ovl{\Theta}(\phi_s)$ produces a correct lift $\wt{f}$.

Hence it suffices to show that for $\nu_h$ the valuation lifting property implies the statement of the lemma. Let $Q \in \bC[Z_{1},Z_{2},Z_{3}]$ be any $\Ab(H)$-homogeneous relation between $p_{1,0},p_{2,0},p_{3,0}$. We can shift it by adding an element $W(Z_{1},Z_{2},Z_{3})R$, to obtain a correct lift $\wt{q} = Q+WR$ of $q = Q(p_1,p_2,p_3)$. That is, $\wt{\nu}_h(Q+WR) \geq \nu_h(q)$. But $\wt{\nu}_h(Q) < \nu_h(q)$ because $Q$ is a relation between leading forms of $p_1,p_2,p_3$ with respect to $\nu_h$. Thus the leading forms of $Q$ and $WR$ with respect to $\wt{\nu}_h$ must cancel: $-Q_0 = (WR)_0 = W_0R_0$. Since $Q_0$ is a relation between $p_{1,0},p_{2,0},p_{3,0}$, $Q_1$ also is, and if $Q_1 \neq 0$ we proceed by induction, repeating the argument for $Q_1$, to show that $Q$ is a multiple of $R_0$.

\end{proof}

\begin{proof}[Proof of theorem~\ref{theorem:cox-rings-reducible}]
We show that the valuation lifting condition from Theorem~\ref{theorem_valuation_lifting} is satisfied. Note that by definition of a monomial valuation it suffices to check only elements of $\bC[x,y,z]^{[G,G]}$ which are homogeneous with respect to the standard degree grading and with respect to $\Ab(G)$-action, i.e., by Corollary~\ref{corollary:classification-gl2c}, precisely the elements homogeneous with respect to standard grading and $\Ab(H)$-action. 

Let $f\in \bC[x,y,z]^{[G,G]}$ be any element homogeneous with respect to both the standard degree and the $\Ab(H)$-action. Take any $F\in \kappa^{-1}(f) \subset \bC(Z_1,Z_2,Z_3,Z_{4})$. Set $N = \max_{i=1,\ldots,m}(\nu_{i}(f)-\wt{\nu}_{i}(F))$. We may assume $N > 0$, since $N \ge 0$ by definition of $\wt{\nu}_{i}$, and the valuation lifting condition is equivalent to $N = 0$. We proceed by induction: we find $F'$ such that
\begin{enumerate}[label=(\arabic*)]
\item $\kappa(F') = f$,
\item $\wt{\nu}_{i}(F')\ge \wt{\nu}_{i}(F)$ for all $i$,
\item $\wt{\nu}_{i_{0}}(F') > \wt{\nu}_{i_{0}}(F)$ for some $i_{0}$.
\end{enumerate}
Let $j$ be any index such that $\wt{\nu}_{j}(F) < \nu_{j}(f)$. We decompose $F = F_{0}+F_{1}$ with respect to~$\wt{\nu}_{j}$. Changing coordinates if necessary, we may assume that the corresponding matrix $g_j$ acts diagonally and decompose $p_{i} = p_{i,0} + p_{i,1}$ with respect to~$\nu_{j}$. 

Since $\wt{\nu}_j(F) < \nu_{j}(f)$, the part of $F$ with smallest $j$-th valuation must annihilate parts of generators with smallest $j$-th valuation in order to increase valuation when passing through~$\kappa$. Formally, we have $F_{0}(p_{1,0},p_{2,0},p_{3,0},z) = 0$, but since $z$ is algebraically independent of $p_{1,0},p_{2,0},p_{3,0}$, in fact $F_{0}\in \bC[Z_{1},Z_{2},Z_{3}]$ is a relation between $p_{1,0},p_{2,0},p_{3,0}$. Moreover, as a sum of monomials in an element of $\bC[Z_{1},Z_{2},Z_{3},Z_{4}]$ homogeneous with respect to $\Ab(H)$-action, $F_{0}$ is homogeneous with respect to $\Ab(H)$-action.

By Lemma~\ref{lemma:relations-of-initials} applied to $\nu_j$ and $\wt{\nu}_j$ we have $F_{0} = PR_{0}$ for some $P\in \bC[Z_{1},Z_{2},Z_{3}]$. We claim that $F' = F-PR$ satisfies conditions (1)-(3) with $i_{0}= j$. Condition (1) is immediate since $\kappa(R) = 0$. Condition~(3) holds because $F' = F - PR = F_1-R_1P$ and $\wt{\nu}_{j}(R_1) > \wt{\nu}_{j}(R_{0})$. To prove condition~(2) we use again $F' = F_1 - R_1P$ and the fact that $R$ is a trinomial. We have to show that $\wt{\nu}_{i}(R_1) \ge \wt{\nu}_{i}(R_{0})$. Since $R_0$ consists of at least two monomials (as a relation between leading forms), $R_{1}$ is either 0 or a monomial. Assuming the latter and repeating the same argument for $\wt{\nu}_{i}$ for $i \neq j$ we see that there are at least two monomials in $R$ with valuation $\wt{\nu}_{i}(R)$. Thus at least one of them is in $R_{0}$, which implies $\wt{\nu}_{i}(R_{1}) \ge \wt{\nu}_{i}(R) = \wt{\nu}_i(R_{0})$.
\end{proof}

\begin{remark}
The argument generalizes immediately to an analogous description of Cox ring of minimal model of quotient singularity $\bC^{n+2}/G$ for finite non-abelian subgroup $G\subset \SL(n+2,\bC)$, acting on $\bC^{n+2}$ via a representation which splits into one 2-dimensional component and $n$ components of dimension~1.
\end{remark}

\begin{remark}
Some ideas in the above proof are related to the algorithm presented in~\cite[Sect.~4]{Yamagishi}. In the notation therein, $F_{0}=\min_{j}(F),\ p_{i,0} = \min_{j}(p_{i})$ and $\min_{j}(I) = (R_{0}) = \min_{j}(J)$. One may also check that all steps of the algorithm in~\cite[Sect.~4]{Yamagishi} end without introducing additional generators which gives a different proof of Theorem~\ref{theorem:cox-rings-reducible}.
\end{remark}

\begin{remark}
Note that to get the generator of the ideal of relations between elements generating the Cox ring $\cR(X)$ listed in Theorem~\ref{theorem:cox-rings-reducible} it suffices to take the trinomial generator of relations between $p_{1},p_{2},p_{3}$ and homogenise it with respect to $\Ab(G)$-action using variables mapped into generators of the form $t_{i}^{-r_{i}}$.
\end{remark}

The first application of Theorem~\ref{theorem:cox-rings-reducible} was the case of dihedral group, obtained in the odd case as $(\mu_{4}\mid \langle 1 \rangle;\ \BD_{4n}\mid \bZ_{n})$ and in the even case as $(\mu_{4}\mid \langle 1 \rangle;\ \BD_{2n}\mid \bZ_{n/2})$, studied in section~\ref{section_dihedral}. Here we provide two examples with an interesting feature, not appearing in the series of dihedral groups: the groups contain elements of age~2.

\begin{example}
Let $G = (\mu_{4}\mid \mu_{2};\ \BD_{16}\mid \BD_8) = \left \langle \left(\begin{smallmatrix}0 & i\\ i & 0 \end{smallmatrix}\right), \left(\begin{smallmatrix}\zeta_{8}^{3}& 0\\ 0 & \zeta_{8}\end{smallmatrix}\right) \right \rangle $. Using GAP~\cite{GAP4} we compute that~$G$ has order~16 and~six nontrivial conjugacy classes, five of which have age~1.
One may choose the following representatives of junior classes:
\begin{equation*}
g_1 = \left(\begin{matrix}0 & -1  & 0\\ 1 & 0 & 0\\ 0 & 0 & 1\end{matrix}\right),\qquad g_2 =\left(\begin{matrix}0 & \zeta_8 & 0\\ \zeta_8^7 & 0 & 0\\ 0 & 0 & -1 \end{matrix}\right)
\end{equation*}
and  $g_{3} = \diag(-1,-1,1), \ g_{4} = \diag(-i,i,1), \ g_{5} = \diag(\zeta_{8}^3,\zeta_8, -1)$.

Here $H = \BD_{16}$, so $[G,G] \simeq [\ovl{G},\ovl{G}] = [\BD_{16}, \BD_{16}] = \left\langle\left(\begin{smallmatrix}i & 0 \\ 0 & -i\end{smallmatrix}\right)\right\rangle$. By Lemma~\ref{lemma:invariants-gl2c}, the generators of algebra of invariants $\bC[x,y]^{[\ovl{G},\ovl{G}]}$ homogeneous with respect to $\Ab(H)$-action are $p_{1}=x^4+y^4, \ p_2= x^4-y^4,\ p_3 = xy$ with relation $Z_1^2-Z_2^2-4Z_3^4$. Diagonalizing representatives of conjugacy classes of age~1 we compute values of corresponding monomial valuations $\nu_{1},\ldots,\nu_5$ on generators $p_1,p_2,p_3$ and $z$ of $\bC[x,y,z]^{[G,G]}$:
\begin{center}
\begin{tabular}{c|cccc}
val\textbackslash gen & $x^4+y^4$ & $x^4-y^4$ & $xy$ & $z$ \\
\hline
$\nu_{1}$ & 4 & 6 & 2 & 0 \\
$\nu_{2}$ & 1 & 0 & 0 & 1\\
$\nu_3$ & 4 & 4 & 2 & 0 \\
$\nu_4$ & 4 & 4 & 4 & 0 \\
$\nu_5$ & 4 & 4 & 4 & 4
\end{tabular}
\end{center}
By Theorem~\ref{theorem:cox-rings-reducible}, the Cox ring of a crepant resolution $X\to \bC^{3}/G$ is isomorphic to $\bC[Z_{1},Z_{2},Z_{3},Z_{4},T_{1},\ldots,T_{5}]/(Z_{1}^2 T_2- Z_2^2 T_1 - 4Z_3^4T_4^2T_5)$ with degree matrix
\begin{equation*}
\left( \begin{array}{cc|cc|ccc|cc}
4 & 0  & 6 & -4 & 2 & 0  & 0  & 0 & 0\\
1 & -2 & 0 & 0  & 0 & 0  & 0  & 1 & 0\\
4 & 0  & 4 & 0  & 2 & 0  & 0  & 0 & -2\\
4 & 0  & 4 & 0  & 4 & -4 & 0  & 0 & 0\\
4 & 0  & 4 & 0  & 4 & 0  & -8 & 4 & 0
\end{array} \right),
\end{equation*}
where the first three groups of columns correspond to variables in monomials of the relation, and the last two columns to variables~$Z_4$ and~$T_3$, not involved in the relation.

As described in section~\ref{section_methods_Mov}, we compute the movable cone of~$X$ and its chamber decomposition. The rays of $\Mov(X) \subset \bR^5$ are
\begin{equation*}
\begin{array}{ccc}
v_{1} = (0,0,0,0,1), & v_{2} = (0,1,0,0,4), & v_{3} = (1,0,1,1,1),\\
v_{4} = (1,0,1,2,2), & v_{5} = (2,1,2,4,4), & v_{6} = (3,0,2,2,2),\\
v_{7} = (4,1,3,4,4), & v_{8} = (4,1,4,4,4), & v_{9} = (12,3,8,8,12).
\end{array}
\end{equation*} 

Let $w_1 = (1,0,1,1,2), \ w_2 = (3,0,2,2,6), \ w_3 = (6,1,4,4,8)$. Then the~11 (simplicial) chambers in $\Mov(X)$, corresponding to all crepant resolutions of $\bC^3/G$,~are:
\begin{equation*}
\begin{array}{cc}
\sigma_1 =cone(v_6,v_7,v_8,v_9,w_3), & \sigma_2 =cone(v_2,v_7,v_8,v_9,w_3), \\ \sigma_3 =cone(v_4,v_6,v_7,v_8,w_3), & \sigma_4 =cone(v_2,v_4,v_7,v_8,w_3), \\ \sigma_5 =cone(v_4,v_6,v_8,w_1,w_3), & \sigma_6 =cone(v_4,v_6,w_1,w_2,w_3),\\ \sigma_7 =cone(v_2,v_4,v_8,w_1,w_3), & \sigma_8 =cone(v_2,v_4,v_5,v_7,v_8), \\ \sigma_9 =cone(v_2,v_4,w_1,w_2,w_3),& \sigma_{10} =cone(v_3,v_4,v_6,v_8,w_1), \\ \sigma_{11} =cone(v_1,v_2,v_4,w_1,w_2).&
\end{array}
\end{equation*}

Flops between resolutions, i.e. pairs of adjacent chambers, are shown in the diagram below; a label $v/w$ at an edge means that in the set of rays $v$ is replaced by $w$.

\begin{equation*}
\xymatrix{
& & \sigma_{10} \ar@{-}[d]^{v_3 / w_3}&\\ 
\sigma_1 \ar@{-}[r]^{v_9 / v_4} \ar@{-}[d]^{v_6 / v_2} & \sigma_3\ar@{-}[d]^{v_6 / v_2} \ar@{-}[r]^{v_7 / w_1} & \sigma_5 \ar@{-}[r]^{v_8 / w_2} \ar@{-}[d]^{v_6 / v_2}& \sigma_6 \ar@{-}[d]^{v_6 / v_2} \\
\sigma_2 \ar@{-}[r]^{v_9 / v_4} & \sigma_4 \ar@{-}[d]^{w_3 / v_5}\ar@{-}[r]^{v_7 / w_1} & \sigma_7 \ar@{-}[r]^{v_8 / w_2} & \sigma_9 \ar@{-}[r]^{w_3 / v_1} & \sigma_{11}\\
& \sigma_8 
}
\end{equation*}

\end{example}

\begin{example}
Consider $G = (\mu_{8} \mid \mu_{4};\ \BD_{12} \mid C_{6})= \left\langle \left(\begin{smallmatrix}\zeta_{6} & 0\\ 0 & \zeta_6^5\end{smallmatrix} \right), \ \left(\begin{smallmatrix}0 & \zeta_{8}^{3}\\ \zeta_8^3 & 0\end{smallmatrix} \right)\right\rangle$ of order~24. It has seven conjugacy classes $[g_{1}],\ldots, [g_7]$ of age $1$ and four of age~$2$.

The representatives of junior classes are $g_1 = \diag(-1,-1,1), \ g_2 = \diag(\zeta_6^5,\zeta_6,1), \ g_3 = \diag(\zeta_3^2,\zeta_3,1), \ g_4 = \diag(i,i,-1), \ g_5 = \diag(\zeta_{12},\zeta_{12}^5,-1)$ and
\begin{equation*}
g_6 = \left(\begin{matrix}0 &\zeta_8^5 & 0\\ \zeta_8^5 & 0 & 0\\ 0 & 0 & i\end{matrix}\right),\qquad  g_7 = \left(\begin{matrix}0 &\zeta_8 & 0\\ \zeta_8 & 0 & 0\\ 0 & 0 & i\end{matrix}\right).
\end{equation*}

In this case $H = \BD_{12}$, so $[G,G] \simeq [\ovl{G},{\ovl{G}}] =  [\BD_{12}, \BD_{12}] = \left\langle\left(\begin{smallmatrix}\zeta_{3} & 0 \\ 0 & \zeta_{3}^{2}\end{smallmatrix}\right)\right\rangle$. Lemma~\ref{lemma:invariants-gl2c} implies that generators of the ring of invariants $\bC[x,y]^{[\ovl{G},\ovl{G}]}$ homogeneous with respect to $\Ab(H)$-action are $p_1=x^3+y^3, \ p_2= x^3-y^3,\ p_3 = xy$ with relation $Z_1^2-Z_2^2-4Z_3^3$. As before, diagonalizing representatives of conjugacy classes of elements of age~1 we compute values of corresponding monomial valuations $\nu_{1},\ldots,\nu_7$ on generators of $\bC[x,y,z]^{[G,G]}$.
\begin{center}
\begin{tabular}{c|cccc}
val\textbackslash gen & $x^3+y^3$ & $x^3-y^3$ & $xy$ & $z$ \\
\hline
$\nu_{1}$ & 3 & 3 & 2 & 0 \\
$\nu_{2}$ & 3 & 3 & 6 & 0\\
$\nu_3$ & 3 & 3 & 3 & 0\\
$\nu_4$ & 3 & 3 & 2 & 2\\
$\nu_5$ & 3 & 3 & 6 & 6\\
$\nu_6$ & 7 & 3 & 2 & 2\\
$\nu_7$ & 3 & 7 & 2 & 2\\
\end{tabular}
\end{center}
By Theorem~\ref{theorem:cox-rings-reducible}, the Cox ring of a crepant resolution $X\to \bC^{3}/G$ is isomorphic to $\bC[Z_{1},Z_{2},Z_{3},Z_{4},T_{1},\ldots,T_{7}]/(Z_{1}^2 T_6- Z_2^2 T_7 - 4Z_3^3T_2^2 T_3 T_5)$ with degree matrix
\begin{equation*}
\left( \begin{array}{cc|cc|cccc|ccc}
3 &  0 & 3 &  0 & 2 &  0 &  0 &  0  & 0 & -2 & 0 \\
3 &  0 & 3 &  0 & 6 & -6 &  0 &  0  & 0 &  0 & 0 \\
3 &  0 & 3 &  0 & 3 &  0 & -3 &  0  & 0 &  0 & 0 \\
3 &  0 & 3 &  0 & 2 &  0 &  0 &  0  & 2 &  0 & -4\\
3 &  0 & 3 &  0 & 6 &  0 &  0 & -12 & 6 &  0 & 0 \\ 
7 & -8 & 3 &  0 & 2 &  0 &  0 &  0  & 2 &  0 & 0 \\
3 &  0 & 7 & -8 & 2 &  0 &  0 &  0  & 2 &  0 & 0
\end{array} \right),
\end{equation*}
where the first three groups of columns correspond to variables in monomials of the relation and the last three columns to variables $Z_4, T_1,T_4$, not involved in the relation.

The movable cone $\Mov(X)\subset \bR^7$ has~17 rays:
\begin{equation*}
\begin{array}{cccc}
(0,0,0,1,1,1,1), & (0,0,0,1,3,1,1), & (0,0,0,3,3,3,7), & (0,0,0,3,3,7,3), \\
(1,1,1,1,1,1,1), & (1,1,1,1,3,1,1), & (2,3,3,2,3,2,2), & (2,3,3,2,6,2,2), \\
(2,3,3,3,3,3,3), & (2,3,3,3,3,3,7), & (2,3,3,3,3,7,3), & (2,6,3,2,6,2,2), \\
(2,6,3,6,6,6,6), & (2,6,3,6,6,6,14), & (2,6,3,6,6,14,6), & (3,3,3,3,3,3,7), \\
(3,3,3,3,3,7,3). & & & 
\end{array}
\end{equation*} 

It is subdivided into~34 chambers, corresponding to all crepant resolutions of~$\bC^3/G$.

\end{example}

\section{Examples of irreducible representations}\label{section_irreducible}

While for reducible 3-dimensional representations we have proved that the Cox ring of a crepant resolution is defined by a single trinomial equation, the irreducible case is much more interesting from the point of view of the structure of the Cox ring. The representations considered in section~\ref{section_reducible} decompose as a sum of a 2-dimensional and a 1-dimensional component. Thus the quotient space $\bC^3/G$ inherits a natural 2-dimensional torus action (where each $\bC^*$ factor comes from a homothety on a component), which lifts to a resolution. This means that we obtain a (non-compact) $T$-variety of complexity one. The spectrum of the Cox ring for such varieties is always defined by trinomial relations; in the projective case it is shown in~\cite{compl1}, and the case of non-compact rational varieties is treated in~\cite{HaWro}. Since the structure of $[G,G]$-invariants for reducible representations is quite simple (there are always just four invariants), in section~\ref{section_reducible} we always obtain a single trinomial relation in the Cox ring. But in general the structure of the Cox ring of a (crepant) resolution of a 3-dimensional quotient singularity can be more complex, see the case of $A_4$ in~\cite[Thm~4.5]{quotsingcox}.

Thus we intended to investigate the irreducible case not only to understand the geometry of the resolutions, but also to get new insight into the structure of the Cox ring in this much more intriguing setting. To be able to finish our computations, we have chosen the smallest possible groups with interesting properties. We restricted to groups with elements of age~2 in order to work with singularities where not much is known about the set of crepant resolutions. We also tried to find groups with the number of conjugacy classes of age~1 as small as possible in order to minimize the number of monomial valuations involved in the process of constructing generators of the Cox ring. We used a simple script and the library of small groups in GAP~\cite{GAP4} to test all finite groups up to order~256 and check which have irreducible faithful 3-dimensional representations with desired properties. We completed computations of the Cox ring for three groups and the results are presented below.

The first considered matrix group has order 21. It is a trihedral group: a group generated by diagonal matrices and a permutation matrix of a cycle of length~3. Moreover, it is the smallest example of a 3-dimensional representation with elements of age~2; see also~\cite[Ex.~5.5]{superpotentials}. The second group, also trihedral, has order 27. It is a representation of the Heisenberg group, consisting of upper-triangular matrices over $\bZ_3$ with 1's on the diagonal. The third one has order 54 and it is a double extension of the Heisenberg group.

For all three groups we compute the ring of invariants of the commutator subgroup $[G,G]$ and find its generating set consisting of eigenvectors of the $\Ab(G)$ action. The main difficulty is in the next step: extending this generating set to a (minimal) one satisfying the valuation lifting property from Theorem~\ref{theorem_valuation_lifting}. We use the following simple algorithm:
\begin{enumerate}
\item for a fixed (standard) degree, starting from the smallest,  compute the space of linear relations between leading forms $L_{1,i},\ldots,L_{k,i}$ of $[G,G]$-invariants $P_1,\ldots,P_k$ in this degree for different monomial valuations $\nu_i$ corresponding to junior classes,
\item intersect spaces of relations computed for different valuations in order to check if there is a relation $R$ which increases values of more than one valuation simultaneously (i.e. $R(L_{1,i},\ldots,L_{k,i}) = 0 = R(L_{1,j},\ldots,L_{k,j})$ for some valuations $\nu_i, \nu_j$),
\item check whether such a relation $R$ produces an element $R(P_1,\ldots,P_k)$ not satisfying valuation lifting property with respect to current generating set -- if yes then add it to the set of generators,
\item check whether the current generating set is minimal with the valuation lifting property (adding a new element may cause some redundancies),
\item check whether the spectrum of the ring determined by current generating set has smooth GIT quotients -- if not then go back to step 1, increasing the degree.
\end{enumerate}

Finally, one has to verify that the ring generated by the obtained set is really a Cox ring. This can be done either with the Singular~\cite{Singular} library \texttt{quotsingcox.lib} \cite{quotsingcox} accompanying~\cite{CompCox} or with the algorithm for finding the Cox ring of minimal models of quotient singularities from~\cite{Yamagishi}. In each of presented cases we try both methods and at least one works (that is, computations end in a reasonable amount of time). Note that, however, the algorithm from~\cite{Yamagishi} does not behave very well in the case when the candidate for the generating set of the Cox ring is not correct, hence we use it only for verification, not for determining elements of the generating set.

In the following section $G$ always denotes currently investigated matrix group and~$X$ is a crepant resolution of $\bC^3/G$.

\subsection{The 21-element group}\label{section_21}
Consider a trihedral group~$G$ generated~by
\begin{equation*}
    \left(\begin{array}{ccc}
    \zeta_7 & 0 & 0 \\
    0 & \zeta_7^2 & 0 \\
    0 & 0 & \zeta_7^4
    \end{array}\right),  \qquad
    \left(\begin{array}{ccc}
    0 & 0 & 1 \\
    1 & 0 & 0 \\
    0 & 1 & 0
    \end{array}\right),
\end{equation*}
where $\zeta_7$ is the seventh root of unity. The commutator subgroup is $[G,G] \simeq \bZ_7$, generated by the first group generator.

Determining a generating set of $\cR(X)$ for this group seems feasible at the first glance, because there are just 4 nontrivial conjugacy classes: 3 of age~1 and 1 of age~2, the last one containing the cube of the first group generator. Hence we have to find generators corresponding to $[G,G]$-invariants and add just 3 other ones, corresponding to exceptional divisors. However, $\bC[x,y,z]^{[G,G]}$ needs already at least 13 generators, in degrees from~3 to~7. The requirement of being eigenvectors with respect to the action of $\Ab(G) \simeq \bZ_3$ causes that they cannot be taken monomials.

Our computations show that the initial set of $[G,G]$-invariants which are $\Ab(G)$-eigenvectors, returned by the library~\cite{quotsingcox} based on Singular's computation of finite group invariants, is almost suitable for constructing the generating set of $\cR(X)$. It suffices to modify one invariant in degree~6 and three in degree~7 by a correction term, which is a product of lower degree generators, to increase their valuations (associated with conjugacy classes of elements of order~3). Thus, we have~16 generators of $\cR(X)$ in total.

\begin{proposition}\label{gen_21}
  The following set of generators of $\bC[x,y,z]^{[G,G]}$ satisfies valuation lifting property, i.e. it produces a generating set of the Cox ring $\cR(X)$ via Theorem~\ref{theorem_valuation_lifting}.
  \begin{align*}
& F_1 = xyz, \qquad G_1 = xy^3+x^3z+yz^3,\\
& G_2 = (-\zeta_3-2)xy^3+(2\zeta_3+1)x^3z+(-\zeta_3+1)yz^3,\\
& G_3 = (\zeta_3-1)xy^3+(-2\zeta_3-1)x^3z+(\zeta_3+2)yz^3,\\
& H_1 = x^3y^2+y^3z^2+x^2z^3, \qquad H_2 = \zeta_3x^3y^2+(-\zeta_3-1)y^3z^2+x^2z^3,\\
& H_3 = (-\zeta_3-1)x^3y^2+\zeta_3y^3z^2+x^2z^3, \qquad L_1 = x^5y+y^5z+xz^5 - 3x^2y^2z^2,\\
& L_2 = \zeta_3x^5y+(-\zeta_3-1)y^5z+xz^5, \qquad L_3 = (-\zeta_3-1)x^5y+\zeta_3y^5z+xz^5,\\
& M_1 = x^7+y^7+z^7 - x^2y^4z - x^4yz^2 - xy^2z^4,\\
& M_2 = (-3\zeta_3-2)x^7+(\zeta_3+3)y^7-7x^2y^4z+(7\zeta_3+7)x^4yz^2-7\zeta_3xy^2z^4+(2\zeta_3-1)z^7,\\
& M_3 = (-3\zeta_3-1)x^7+(\zeta_3-2)y^7+7x^2y^4z+7\zeta_3x^4yz^2+(-7\zeta_3-7)xy^2z^4+(2\zeta_3+3)z^7.
  \end{align*}
  The matrix of values of the monomial valuations is (columns corresponds to generators, ordered as above)
$$\left(\begin{array}{ccccccccccccc}
7 & 7 & 7 & 7 & 7 & 7 & 7 & 7 & 7 & 7 & 7 & 7 & 7\\
0 & 0 & 2 & 1 & 0 & 2 & 1 & 3 & 2 & 1 & 3 & 2 & 4\\
0 & 0 & 1 & 2 & 0 & 1 & 2 & 3 & 1 & 2 & 3 & 4 & 2
\end{array}\right)$$
\end{proposition}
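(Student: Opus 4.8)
The plan is to verify the valuation lifting condition of Theorem~\ref{theorem_valuation_lifting} directly for the surjection $\kappa$ sending the thirteen variables to $F_1,G_1,G_2,G_3,H_1,H_2,H_3,L_1,L_2,L_3,M_1,M_2,M_3$, together with the three induced monomial valuations $\wt{\nu}_1,\wt{\nu}_2,\wt{\nu}_3$ attached to the junior classes (one of order~7, giving the constant row of~7's, and two of order~3). First I would confirm that these elements genuinely generate $\bC[x,y,z]^{[G,G]}=\bC[x,y,z]^{\bZ_7}$: the library output already provides an $\Ab(G)$-homogeneous generating set, and each of the four corrected generators (the degree-6 element $L_1$ and the degree-7 elements $M_1,M_2,M_3$) differs from its library counterpart only by a product of retained lower-degree generators, e.g. $L_1=(x^5y+y^5z+xz^5)-3F_1^2$ and $M_1=(x^7+y^7+z^7)-F_1G_1$. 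Hence replacing them leaves the generated subalgebra unchanged. I would also certify the valuation matrix by a direct monomial count, after choosing, for $\wt{\nu}_2,\wt{\nu}_3$, coordinates in which the corresponding order-3 element acts diagonally; the row of~7's for $\wt{\nu}_1$ simply records that every monomial of a $\bZ_7$-invariant has $\wt{\nu}_1$-weight a multiple of~7.

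For the lifting itself I would reproduce the mechanism of the reducible case (Lemma~\ref{lemma:relations-of-initials} and the proof of Theorem~\ref{theorem:cox-rings-reducible}), now with three valuations and a far more complicated relation ideal $I=\ker\kappa$. Given an $\Ab(G)$-homogeneous $f$ and a preimage $F\in\kappa^{-1}(f)$ with $N=\max_i\bigl(\nu_i(f)-\wt{\nu}_i(F)\bigr)>0$, I pick an index $j$ with $\wt{\nu}_j(F)<\nu_j(f)$, decompose $F=F_0+F_1$ into its $\wt{\nu}_j$-initial part and higher-order remainder, and observe that $F_0$ is an $\Ab(G)$-homogeneous relation among the $\nu_j$-initial forms of the thirteen generators. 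The crux is the statement, analogous to Lemma~\ref{lemma:relations-of-initials}, that every such relation is the $\wt{\nu}_j$-initial form of some element of $I$; equivalently, that the initial ideal $\operatorname{in}_{\wt{\nu}_j}(I)$ coincides with the full ideal of relations among the $\nu_j$-initial forms. Granting this, I subtract a suitable element of $I$ from $F$ to raise $\wt{\nu}_j$ strictly, and iterate to drive $N$ to~$0$.

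The main obstacle is precisely this initial-ideal identity together with its compatibility across the three valuations. Unlike the reducible case, $I$ is not a single trinomial, so a correction that raises $\wt{\nu}_j$ may \emph{a priori} lower some $\wt{\nu}_i$ with $i\neq j$, and one must guarantee that the descent terminates. I expect to split the argument into two stages: first, compute generators and a weight-adapted Gr\"obner basis of $I$ and check the equality $\operatorname{in}_{\wt{\nu}_j}(I)=(\text{relations among the }\nu_j\text{-initials})$ for each $j$ separately; second, establish simultaneity, namely that the subtractions can be organised so that no valuation ever decreases, whence only finitely many $\Ab(G)$-degrees require inspection. This is exactly where the four correction terms earn their place: without them some relation among the order-3 initial forms fails to lift, the check in step~(3) of the procedure breaks, and one is forced either to modify a generator or to introduce a new one. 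In practice I would certify both stages computationally, running the procedures of \texttt{quotsingcox.lib}~\cite{quotsingcox} or the algorithm of~\cite{Yamagishi}; the genuine mathematical content of the proposition is that the resulting sixteen-element set is \emph{saturated}, so that no generator beyond those listed is needed.
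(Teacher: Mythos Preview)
Your proposal is essentially the same approach as the paper's: both treat this proposition as a computational verification, and you correctly identify that the final certification must be delegated to \texttt{quotsingcox.lib} or the algorithm of~\cite{Yamagishi}. One practical point the paper records that you could not have known: for this particular group the geometric criterion implemented in~\cite{quotsingcox} does \emph{not} terminate in reasonable time (the preimage computation for the relation ideal is too heavy), so only the Yamagishi route actually succeeds here, with pieces split between Macaulay2 and Singular.

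Your intermediate theoretical scaffolding---the equality $\operatorname{in}_{\wt{\nu}_j}(I)=(\text{relations among the }\nu_j\text{-initials})$ and the simultaneity claim that corrections can be organised so that no $\wt{\nu}_i$ decreases---is a reasonable articulation of what is implicitly being checked, and is more explicit than anything the paper writes down. But be aware that the paper does not attempt to prove these two statements conceptually; the reducible-case argument you invoke relied essentially on the relation ideal being principal and trinomial, and neither holds here, so the inductive descent you sketch has no obvious termination argument without computation. In the end both you and the paper fall back on the machine, and the ``genuine mathematical content'' you identify (saturation of the sixteen-element set) is exactly what the paper's 5-step search procedure is designed to detect and what Yamagishi's algorithm then confirms.
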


\begin{proof}
  One preforms necessary computations along the scheme explained above. The linear algebra part, that is investigating relations between leading forms, is done using simple scripts in Macaulay2~\cite{M2} and the smoothness check is done using the Singular library~\cite{quotsingcox}. However, in this case checking that the constructed ring is a Cox ring, is hard. The algorithm in the library~\cite{quotsingcox} is not efficient enough to finish the computation on a standard computer (the problematic step is computing preimages of generators of the ideal of relations, which is bigger and more complex that in the next two cases). One can use the algorithm in~\cite{Yamagishi}, though some parts of it have to be implemented in Macaulay2 and some in Singular due to certain restrictions of these systems.
\end{proof}

We also determine the subdivision of $\Mov(X)$ into chambers, using methods and tools described in section~\ref{section_methods_Mov}.

\begin{proposition}\label{proposition_21_trihedral}
The number of chambers in the GIT chamber subdivision of the cone $\Mov(X)$, i.e. the number of projective crepant resolutions of $\bC^3/G$, is~4. There is a central chamber, from which one can pass to each of the remaining (corner) ones, which are not connected to each other.
\end{proposition}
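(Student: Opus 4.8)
The plan is to run the GIT-theoretic machinery of section~\ref{section_methods_Mov} on the Cox ring produced in Proposition~\ref{gen_21}. Since $G$ has three conjugacy classes of age~$1$, we have $\Cl(X)\simeq\bZ^3$, so $\Mov(X)$ is a three-dimensional cone and the whole chamber picture can be drawn explicitly. The Cox ring $\cR(X)$ has the $16$ generators of Proposition~\ref{gen_21}: the $13$ modified $[G,G]$-invariants $F_1,\ldots,M_3$ together with the three exceptional generators $t_i^{-r_i}$, whose orders are $7,3,3$. Fixing the resulting closed embedding $\Spec\cR(X)\hookrightarrow\bC^{16}$, I would assemble the $3\times 16$ weight matrix $U$ of the $\bT_X$-action by appending to the $3\times 13$ valuation matrix displayed in Proposition~\ref{gen_21} (whose columns are the $\bZ^3$-degrees of the invariant generators) the three columns $-7e_1,-3e_2,-3e_3$ coming from the exceptional generators.

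First I would compute $\Mov(X)$ via Proposition~\ref{prop_mov_intersection}, as the intersection $\bigcap_{j} U(\gamma_j)$ of the images under $U$ of the facets $\gamma_j$ of the positive orthant $\gamma\subset\bR^{16}$; each $U(\gamma_j)$ is the cone on $15$ columns of $U$, and the intersection is the three-dimensional ambient cone. Next, to obtain the subdivision into chambers, I would determine the ideal $I$ of relations of $\cR(X)$ in $\bC[Z_1,\ldots,Z_{16}]$ --- already needed for the verification in Proposition~\ref{gen_21} that the listed elements generate the Cox ring --- and then follow Definition~\ref{def_aface}: enumerate the faces of $\gamma$ whose associated $(\bC^*)^{16}$-orbit meets $\Spec\cR(X)$ (the $I$-faces), project them under $U$, and take the common refinement of the projected $I$-faces inside $\Mov(X)$. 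Concretely this is precisely the computation performed by \texttt{gitfan.lib} of~\cite{KeicherGITFan}, so in practice I would feed the pair $(I,U)$ to that Singular library and restrict the resulting GIT-fan to $\Mov(X)$.

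Reading off the output should yield four full-dimensional chambers; since inside $\Mov(X)$ the GIT chambers coincide with the Mori chambers, and by the discussion in section~\ref{section_methods_Mov} these are in bijection with the projective crepant resolutions, this gives the count of~$4$. To recover the flop structure I would record, for each pair of chambers, whether they share a codimension-one wall of the subdivision: adjacency across a wall is exactly a single flop between the associated resolutions. One then checks that a single chamber --- the central one --- meets each of the other three along a wall while no two of the remaining (corner) chambers share a wall, so that the flop graph is the star $K_{1,3}$; illustratively, a generic planar section of $\Mov(X)$ is then subdivided like the medial triangle of a triangle, one central region surrounded by three corner regions touching it along edges but meeting each other only at vertices.

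The main obstacle is the relation ideal $I$. In contrast to the dihedral and reducible cases, where $I$ is a single trinomial and the complete list of $I$-faces can be written down by hand (cf.\ Lemma~\ref{lemma:maximal-a-faces-odd}), here $I$ is substantially larger and more complicated --- this is exactly the feature that already made verifying the Cox ring difficult in Proposition~\ref{gen_21} --- so the by-hand enumeration of $I$-faces is impractical and one must rely on the algorithmic tools. A secondary, purely combinatorial concern is the number of faces of $\gamma$, namely $2^{16}$; but the $I$-face condition prunes these drastically, and the three-dimensional target keeps the final fan small enough that the chamber count and adjacencies can be checked and even visualised directly.
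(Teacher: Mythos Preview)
Your proposal is correct and follows exactly the approach the paper uses: the proposition in the paper has no separate proof beyond invoking the machinery of section~\ref{section_methods_Mov} (in particular \texttt{gitfan.lib}) applied to the weight matrix and relation ideal coming from Proposition~\ref{gen_21}, which is precisely what you outline. Your identification of the $3\times 16$ weight matrix, the computational route via projected $I$-faces, and the reading of the flop graph from codimension-one walls all match the paper's method, and your remarks on the size of~$I$ echo the difficulties already noted in the proof of Proposition~\ref{gen_21}.
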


\subsection{The 27-element group}
Considered trihedral group $G$ is generated by
\begin{equation*}
    \left(\begin{array}{ccc}
    1 & 0 & 0 \\
    0 & \zeta_3 & 0 \\
    0 & 0 & \zeta_3^2
    \end{array}\right),  \qquad
    \left(\begin{array}{ccc}
    0 & 0 & 1 \\
    1 & 0 & 0 \\
    0 & 1 & 0
    \end{array}\right),
\end{equation*}
where $\zeta_3$ is the third root of unity. The commutator subgroup $[G,G] \simeq \bZ_3$ is generated just by $\zeta_3I_3$.

There are 10 nontrivial conjugacy classes: 9 of age~1 and 1 of age~2, the last one containing $\zeta_3^2I_3$. Hence we have 9 monomial valuations to consider. The computations are simplified slightly by the fact that some of them share the same coordinate set -- there just~4 different coordinate sets.  Also, the valuation corresponding to $\zeta_3I_3$ is just the same as the lowest degree of a monomial in a polynomial, which actually allows us to skip it in our considerations.

The ring $\bC[x,y,z]^{[G,G]}$ is easy to describe, since the commutator subgroup has such a simple form: it is generated by all 10 monomials in degree~3. One can also easily determine their combinations which are eigenvectors of $\Ab(G)$. In particular, a monomial $xyz$ is $G$-invariant, so it belongs to this set.

It turns out again that not much is needed to turn the set of eigenvectors of $\Ab(G)$ into a set generating the Cox ring~$\cR(X)$. One just needs to add three new generators, which are counterparts of $xyz$ in other coordinate sets diagonalizing elements of~$G$. Then one of the generators from the initial set, $x^3+y^3+z^3$, becomes unnecessary, that is we can generate it from the remaining ones with the valuation lifting property satisfied. Thus we are left with~12 generators.

\begin{proposition}\label{gen_27}
  The following set of generators of $\bC[x,y,z]^{[G,G]}$ satisfies valuation lifting property, i.e. it produces a generating set of the Cox ring $\cR(X)$.
  \begin{align*}
& F_1 = xyz, \qquad F_2 = x^3+y^3+z^3 - 3xyz,\\
&  F_3 = \zeta_3(x^3+y^3+z^3) - 3xyz, \qquad  F_4 = \zeta_3^2(x^3+y^3+z^3) - 3xyz,\\
& G_1 = \zeta_3^2x^3+\zeta_3y^3+z^3, \qquad G_2 = \zeta_3x^3+\zeta_3^2y^3+z^3,\\
& G_3 = xy^2+x^2z+yz^2, \quad G_4 = x^2y+y^2z+xz^2,\\
& G_5 = \zeta_3xy^2+\zeta_3^2x^2z+yz^2, \qquad G_6 = \zeta_3x^2y+\zeta_3^2y^2z+xz^2,\\
& G_7 = \zeta_3^2xy^2+\zeta_3x^2z+yz^2, \qquad G_8 = \zeta_3^2x^2y+\zeta_3y^2z+xz^2.
  \end{align*}
  The matrix of values of the monomial valuations is (columns corresponds to generators, ordered as above)
$$\left(\begin{array}{cccccccccccc}
3 & 0 & 0 & 0 &  0 & 0 &  1 & 2 &  1 & 2 &  1 & 2\\ 
3 & 0 & 0 & 0 &  0 & 0 &  2 & 1 &  2 & 1 &  2 & 1\\
0 & 3 & 0 & 0 &  2 & 1 &  1 & 2 &  0 & 0 &  2 & 1\\
0 & 3 & 0 & 0 &  1 & 2 &  2 & 1 &  0 & 0 &  1 & 2\\
0 & 0 & 3 & 0 &  2 & 1 &  2 & 1 &  1 & 2 &  0 & 0\\
0 & 0 & 3 & 0 &  1 & 2 &  1 & 2 &  2 & 1 &  0 & 0\\
0 & 0 & 0 & 3 &  1 & 2 &  0 & 0 &  1 & 2 &  2 & 1\\
0 & 0 & 0 & 3 &  2 & 1 &  0 & 0 &  2 & 1 &  1 & 2\\
3 & 3 & 3 & 3 &  3 & 3 &  3 & 3 &  3 & 3 &  3 & 3\\
\end{array}\right)$$
\end{proposition}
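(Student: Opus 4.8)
The plan is to follow the five-step algorithm stated at the beginning of Section~\ref{section_irreducible} and then to certify its output via Theorem~\ref{theorem_valuation_lifting}, exactly as in the proof of Proposition~\ref{gen_21}. First I would record the ring of invariants: since $[G,G]=\langle \zeta_3 I_3\rangle$ multiplies a monomial $x^ay^bz^c$ by $\zeta_3^{a+b+c}$, the ring $\bC[x,y,z]^{[G,G]}$ is generated by the ten monomials of degree~$3$. Decomposing these into eigenvectors of the residual $\Ab(G)\simeq\bZ_3\times\bZ_3$-action produces the combinations $F_1$ and $G_1,\ldots,G_8$ together with the power sum $x^3+y^3+z^3$; the fully invariant monomial $xyz=F_1$ already sits in this list.

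Next I would compute the nine monomial valuations attached to the junior classes. Two simplifications apply: the valuations fall into only four distinct coordinate sets, so only four diagonalizations are needed, and the valuation of $\zeta_3 I_3$ is the total-degree valuation, which every degree-$3$ generator meets automatically (it is the last row of $3$'s in the displayed matrix) and hence imposes no constraint on lifting. The matrix then records the values of all these valuations on the proposed generators.

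The combinatorial heart is steps (1)--(2) of the algorithm: in each fixed degree one computes the leading forms of the invariants with respect to each valuation and intersects the spaces of linear relations among them, searching for a relation that raises two valuations simultaneously. This is pure linear algebra, and I would run it with short scripts in Macaulay2~\cite{M2}. The computation reveals that the initial set fails the lifting condition only in a controlled way: one must adjoin the three analogues of $xyz=F_1$ in the remaining diagonalizing frames. Concretely, $F_1,F_2,F_3,F_4$ are the four totally reducible members of the pencil spanned by $xyz$ and $x^3+y^3+z^3$, each a product of three linear forms (for instance $F_2=(x+y+z)(x+\zeta_3 y+\zeta_3^2 z)(x+\zeta_3^2 y+\zeta_3 z)$), and these four triangles realise the four coordinate sets. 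After adjoining $F_2,F_3,F_4$ the generator $x^3+y^3+z^3=F_2+3F_1$ becomes redundant and admits a lift of the correct valuation, and a smoothness check on the GIT quotients of the spectrum of the resulting ring, carried out with the Singular library~\cite{quotsingcox}, confirms that the candidate set is minimal and geometrically correct.

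The main obstacle is the final verification that this candidate ring actually equals $\cR(X)$, i.e. that the valuation lifting condition of Theorem~\ref{theorem_valuation_lifting} holds for \emph{every} $\Ab(G)$-homogeneous $f$, not merely for the finitely many relations unearthed in the degree-by-degree search. Unlike the reducible case of Theorem~\ref{theorem:cox-rings-reducible}, where Lemma~\ref{lemma:relations-of-initials} reduces everything to a single trinomial, here there is no such structural shortcut and one must rely on an algorithmic certificate. In contrast to the $21$-element case of Proposition~\ref{gen_21}, however, the ideal of relations is smaller and less entangled, so I expect the direct procedure of the library~\cite{quotsingcox} to terminate on a standard machine; the algorithm of~\cite{Yamagishi} then furnishes an independent confirmation.
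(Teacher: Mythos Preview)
Your proposal is correct and follows essentially the same approach as the paper: the linear algebra search for lifts and the smoothness check are carried out exactly as in Proposition~\ref{gen_21}, and the final certification is obtained both through the geometric criterion in the library~\cite{quotsingcox} and through the algorithm of~\cite{Yamagishi}, which here (unlike the 21-element case) both terminate. Your added geometric remark that $F_1,\ldots,F_4$ are the four totally reducible members of the Hesse pencil, one for each diagonalizing frame, is a pleasant observation not made explicit in the paper but entirely in line with its reasoning.
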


\begin{proof}
The linear algebra part and the smoothness check is done as in Proposition~\ref{gen_21}. In this case it can be checked that the constructed ring is a Cox ring using the geometric criterion and other procedures from the library developed for~\cite{CompCox}, or it follows by the positive result of the algorithm in~\cite{Yamagishi}. We perform both tests for this generating set.
\end{proof}

We compute the subdivision of $\Mov(X)$ into chambers as described in section~\ref{section_methods_Mov}.

\begin{proposition}
The number of chambers in the GIT chamber subdivision of the cone $\Mov(X)$, i.e. the number of projective crepant resolutions of $\bC^3/G$, is~5272.
\end{proposition}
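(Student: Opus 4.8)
The statement is purely computational, and the plan is to feed the explicit data from Proposition~\ref{gen_27} into the general machinery of section~\ref{section_methods_Mov}. The two inputs required are the weight matrix $U$ of the Picard torus action on $\Spec\cR(X)\subset\bC^{12}$ — which is precisely the $9\times 12$ matrix of monomial valuations displayed in Proposition~\ref{gen_27} — and the ideal $I\subset\bC[Z_1,\ldots,Z_{12}]$ of relations among the twelve generators $F_1,F_2,F_3,F_4,G_1,\ldots,G_8$. The ideal $I$ is the kernel of the presentation map $\bC[Z_1,\ldots,Z_{12}]\to\cR(X)$; since Proposition~\ref{gen_27} already establishes that these twelve elements present the genuine Cox ring, $I$ can be obtained by a standard Gr\"obner basis elimination.

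First I would compute the cone $\Mov(X)\subset\bR^9$. By Proposition~\ref{prop_mov_intersection} it equals the intersection, over all twelve facets of the positive orthant $\gamma\subset\bR^{12}$, of their images under $U$; each facet omits one coordinate, so this amounts to intersecting the twelve $9$-dimensional cones spanned by eleven columns of $U$ each, a routine piece of convex-cone arithmetic.

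Next comes the chamber decomposition. Following Definition~\ref{def_aface} and the subsequent proposition, I would enumerate the $I$-faces of $\gamma$ — the faces whose associated $(\bC^*)^{12}$-orbit meets $\Spec\cR(X)$ — project them through $U$ to the orbit cones, and form the common refinement whose full-dimensional cells are the GIT chambers. Restricting this fan to $\Mov(X)$ and counting the resulting chambers yields the asserted number. Concretely this is carried out by Keicher's library \texttt{gitfan.lib} for Singular~\cite{Singular,KeicherGITFan}, exactly as indicated in section~\ref{section_methods_Mov}, and the output is $5272$.

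The principal obstacle is the combinatorial size of the problem rather than any conceptual difficulty: with twelve generators there are up to $2^{12}$ candidate faces of $\gamma$ to test for the $I$-face condition, and assembling the common refinement of all projected $I$-faces into $5272$ chambers is computationally heavy. Feasibility therefore hinges on the efficient semistability bookkeeping built into \texttt{gitfan.lib}, which exploits the fact that stability depends only on the $(\bC^*)^{12}$-orbit of a point; once the fan is assembled, the chamber count inside $\Mov(X)$ is read off directly.
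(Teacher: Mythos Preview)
Your overall plan---compute the ideal of relations, feed it together with the degree matrix into \texttt{gitfan.lib}, and count chambers inside $\Mov(X)$---is exactly the route the paper indicates. However, there is a concrete error in the data you intend to feed in.

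You say that $\Spec\cR(X)\subset\bC^{12}$ and that the weight matrix $U$ is the $9\times12$ valuation matrix displayed in Proposition~\ref{gen_27}. This is not right. The twelve polynomials $F_1,\ldots,F_4,G_1,\ldots,G_8$ generate $\bC[x,y,z]^{[G,G]}$, not $\cR(X)$; the generating set of $\cR(X)$ produced from them via Theorem~\ref{theorem_valuation_lifting} consists of the twelve elements $\phi_j\prod_i t_i^{\nu_i(\phi_j)}$ \emph{together with} the nine elements $t_i^{-r_i}$, one for each junior conjugacy class. Thus $\Spec\cR(X)\subset\bC^{21}$, and $U$ is a $9\times21$ matrix: the twelve columns you wrote down, plus nine further columns of the form $-r_i e_i$ (compare the matrices $U_k$ in section~\ref{dihedral_odd_mov}, where the $T_i$-columns carry negative entries).

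This omission is not cosmetic. Without the nine $T$-columns, Proposition~\ref{prop_mov_intersection} would not give the correct $\Mov(X)$ (the intersection of images of facets changes completely), the ideal $I$ would lie in the wrong polynomial ring, and the $I$-face enumeration would be over subsets of a $21$-dimensional orthant, so up to $2^{21}$ candidate faces rather than $2^{12}$. Once you correct the embedding and the degree matrix, the rest of your plan is the paper's approach verbatim.
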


\subsection{The 54-element group}

Now $G$ is a double extension of the group from the previous section. We need to add one more generator -- we take:
\begin{equation*}
      \left(\begin{array}{ccc}
    1 & 0 & 0 \\
    0 & \zeta_3 & 0 \\
    0 & 0 & \zeta_3^2
    \end{array}\right),  \qquad
    \left(\begin{array}{ccc}
    0 & 0 & 1 \\
    1 & 0 & 0 \\
    0 & 1 & 0
    \end{array}\right), \qquad
    \left(\begin{array}{ccc}
    -1 & 0 & 0 \\
    0 & 0 & -1 \\
    0 & -1 & 0 
    \end{array}\right).
\end{equation*}
The Heisenberg group is precisely the commutator subgroup. Note that~$G$ is not a trihedral group.

There are 7 conjugacy classes of age~1 and 2 of age~2 in~$G$. The ring of invariants of $[G,G]$ is simpler than in the previous case -- it has only~4 generators. However, this time we need more additional generators, hence finally we arrive at~9 necessary generators of $[G,G]$-invariants in degrees 3, 6, and 9 (where 4 generators in degree~3 are the same as in the previous case). The proof of the next proposition goes along the same lines as for Proposition~\ref{gen_27}.

\begin{proposition}\label{gen_54}
  The following set of generators of $\bC[x,y,z]^{[G,G]}$ satisfies valuation lifting property, i.e. it produces a generating set of the Cox ring $\cR(X)$.
  \begin{align*}
& F_1 = xyz, \qquad F_2 = x^3+y^3+z^3 - 3xyz,\\
&  F_3 = \zeta_3(x^3+y^3+z^3) - 3xyz, \qquad  F_4 = \zeta_3^2(x^3+y^3+z^3) - 3xyz,\\
& G_1 = 3(xyz)^2 + \zeta_3xyz(x^3+y^3+z^3) + \zeta_3^2(x^3y^3+x^3z^3+y^3z^3),\\
& G_2 = 3(xyz)^2 + \zeta_3^2xyz(x^3+y^3+z^3) + \zeta_3(x^3y^3+x^3z^3+y^3z^3),\\
& G_3 = 3(xyz)^2 + xyz(x^3+y^3+z^3) + x^3y^3+x^3z^3+y^3z^3,\\
& G_4 = (xyz)^2 - 3(x^3y^3+x^3z^3+y^3z^3),\\
& H_1 = x^6y^3-x^3y^6-x^6z^3+y^6z^3+x^3z^6-y^3z^6.
  \end{align*}
  The matrix of values of the monomial valuations is (columns corresponds to generators, ordered as above)
$$\left(\begin{array}{ccccccccc}
  1 & 1 & 1 & 1 & 0 & 0 & 0 & 0 &  0\\
3 & 3 & 3 & 3 & 6 & 6 & 6 & 6 &  12\\
3 & 0 & 0 & 0 & 3 & 3 & 3 & 0 &  3\\
0 & 3 & 0 & 0 & 3 & 3 & 0 & 3 &  3\\
0 & 0 & 3 & 0 & 3 & 0 & 3 & 3 &  3\\
0 & 0 & 0 & 3 & 0 & 3 & 3 & 3 &  3\\
3 & 3 & 3 & 3 & 6 & 6 & 6 & 6 &  9
  \end{array}\right)$$
\end{proposition}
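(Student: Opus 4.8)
The plan is to verify the hypothesis of Theorem~\ref{theorem_valuation_lifting} for the nine listed elements, exactly as in the proof of Proposition~\ref{gen_27}, and then to certify that the resulting algebra is the Cox ring. First I would check the structural facts underlying the table and the statement. Namely: (i) each of $F_1,\ldots,F_4,G_1,\ldots,G_4,H_1$ lies in $\bC[x,y,z]^{[G,G]}$ and is homogeneous with respect to the induced $\Ab(G)$-action, which follows by a direct computation since $[G,G]$ is the Heisenberg group acting through the explicit matrices; (ii) these nine elements generate $\bC[x,y,z]^{[G,G]}$ as a $\bC$-algebra (the invariant ring is minimally generated by four elements, but the additional degree-$6$ elements $G_1,\ldots,G_4$ and the degree-$9$ element $H_1$ are retained as Cox-ring generators in order to realize the valuations in the table); and (iii) the entries of the displayed matrix are the values $\nu_i(\phi_j)$, obtained by diagonalizing a representative of each of the seven junior conjugacy classes and reading off the minimal weighted degree of each generator.

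Second, and this is the heart of the argument, I would establish the valuation lifting condition: for every $f\in\bC[x,y,z]^{[G,G]}$ homogeneous with respect to the standard grading and the $\Ab(G)$-action there exists a lift $\wt{f}\in\kappa^{-1}(f)$ with $\wt{\nu}_i(\wt{f})\ge\nu_i(f)$ for all seven valuations simultaneously. I would run the degree-by-degree procedure of Section~\ref{section_irreducible}: in each fixed degree, compute the leading forms of the $[G,G]$-invariants with respect to each $\nu_i$, determine the space of linear relations among them, intersect these spaces across the different valuations to detect relations forcing a simultaneous jump in valuation, and test whether such a relation produces an invariant that fails to lift using the current generators. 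The content of the proposition is that this search stabilizes after incorporating $G_1,\ldots,G_4$ in degree $6$ and $H_1$ in degree $9$: no invariant of any higher degree violates the lifting bound, so no further generators are required. The linear-algebra part of this step would be carried out with the scripts used in Proposition~\ref{gen_21}.

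Finally, I would confirm that the algebra presented by the relations among these nine elements together with the divisor coordinates $t_i^{-r_i}$ is genuinely $\cR(X)$ and not merely a subalgebra. The termination criterion of the procedure is that the GIT quotients of the spectrum of the candidate ring by the Picard torus are smooth, which is precisely the geometric characterization of Cox rings~\cite[1.6.4.3]{CoxRings}; I would verify this smoothness with the library \texttt{quotsingcox.lib} of~\cite{quotsingcox}. As an independent check I would also run the algorithm of~\cite{Yamagishi}, whose positive output directly certifies the Cox ring; performing both tests, as for the previous example, removes any doubt. The main obstacle is not any single calculation but the \emph{completeness} of the valuation-lifting search: one must be certain that the nine generators suffice and that the process does not terminate prematurely, and it is exactly the smoothness check of step~(5) together with the final software verification that guarantees this.
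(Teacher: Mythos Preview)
Your proposal is correct and follows essentially the same approach as the paper, which explicitly states that the proof ``goes along the same lines as for Proposition~\ref{gen_27}'': the degree-by-degree linear-algebra search for valuation-lifting failures, the smoothness check via \texttt{quotsingcox.lib}, and the independent verification through Yamagishi's algorithm. One small imprecision: smoothness of the GIT quotients is the termination heuristic of step~(5), not by itself the full geometric characterization of the Cox ring in~\cite[1.6.4.3]{CoxRings}; the actual certification is the subsequent run of the library procedures or Yamagishi's algorithm, exactly as you describe afterwards.
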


\begin{proposition}
The number of chambers in the GIT chamber subdivision of the cone $\Mov(X)$, i.e. the number of projective crepant resolutions of $\bC^3/G$, is~755.
\end{proposition}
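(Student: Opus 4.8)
The plan is to apply the algorithmic framework of section~\ref{section_methods_Mov} to the explicit presentation of $\cR(X)$ given in Proposition~\ref{gen_54}, exactly as in the $21$- and $27$-element cases treated above. Starting from the nine generators and the $7\times 9$ degree matrix, I would first compute the ideal $I$ of relations among the generators, obtaining the closed embedding $\Spec\cR(X)\hookrightarrow\bC^9$ together with the matrix $U$ of weights of the $\bT_X$-action; this $U$ defines the lattice map $\bZ^9\to\bZ^7\simeq\Cl(X)$ and the induced linear map $U\colon\bR^9\to\bR^7$ that is applied to the faces of the positive orthant $\gamma\subset\bR^9$.

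With this data fixed, $\Mov(X)$ is computed by Proposition~\ref{prop_mov_intersection} as the intersection of the images under $U$ of all facets of $\gamma$. The essential step is the chamber decomposition. Following Definition~\ref{def_aface}, I would determine the $I$-faces of $\gamma$, i.e. those faces whose associated $(\bC^*)^9$-orbit meets $\Spec\cR(X)$; deciding this amounts to an ideal-membership (saturation) test recording whether the selected coordinates can be simultaneously nonzero on the variety. The projected $I$-faces are their images under $U$, and by \cite[Prop.~2.9]{BerchtoldHausenGIT} (see also \cite[Prop.~3.1.1.12]{CoxRings}) the GIT chambers are the cells of the common refinement of all projected $I$-faces. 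Concretely this is executed with Keicher's Singular library \texttt{gitfan.lib} \cite{KeicherGITFan}, which returns the full GIT fan; intersecting it with $\Mov(X)$ and counting the maximal cells gives the number of chambers, and since crepant resolutions are in bijection with the Mori chambers of $\Mov(X)$, this is the number of projective crepant resolutions of $\bC^3/G$.

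The main obstacle is computational rather than conceptual. Unlike the reducible case of section~\ref{section_reducible}, where $I$ is a single trinomial, here the nine generators satisfy several relations of mixed degree, so computing a Gr\"obner basis of $I$ and then running the orbit-intersection tests over the up to $2^9$ candidate faces is expensive, and the subsequent fan assembly --- forming and intersecting all projected $I$-faces and grouping the cells into chambers of $\Mov(X)$ --- grows quickly with the number of orbit cones. The remaining correctness statements (Proposition~\ref{prop_mov_intersection}, the Berchtold--Hausen criterion, and the identification of crepant resolutions with Mori chambers recalled in section~\ref{section_methods_Mov}) are already in place, so the content of the proof is the faithful execution of this computation, whose output is $755$.
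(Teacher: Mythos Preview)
Your overall plan --- apply the machinery of section~\ref{section_methods_Mov} via \texttt{gitfan.lib} to the presentation coming from Proposition~\ref{gen_54} --- is exactly what the paper does, so conceptually you are on target. However, there is a concrete error in your setup that would make the computation fail.

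You write that $\cR(X)$ has nine generators and is embedded as $\Spec\cR(X)\hookrightarrow\bC^{9}$, with $U$ the $7\times 9$ valuation matrix from Proposition~\ref{gen_54}. This is not correct. The nine polynomials $F_1,\ldots,H_1$ generate only $\bC[x,y,z]^{[G,G]}$. By Theorem~\ref{theorem_valuation_lifting}, the Cox ring is generated by the nine elements $\phi_j\cdot\prod_i t_i^{\nu_i(\phi_j)}$ \emph{together with} the seven elements $t_i^{-r_i}$, one for each junior conjugacy class. Hence $\Spec\cR(X)\hookrightarrow\bC^{16}$, not $\bC^{9}$, and the weight matrix $U$ is $7\times 16$: the $7\times 9$ block from Proposition~\ref{gen_54} has to be augmented by seven further columns of the form $-r_i e_i$ coming from the generators $t_i^{-r_i}$. (Compare the dihedral case in section~\ref{dihedral_odd_mov}, the reducible examples after Theorem~\ref{theorem:cox-rings-reducible}, or the $21$-element case in the Appendix, where the ambient space is $\bC^{16}$.) Without these extra columns the images of the facets under $U$ do not give $\Mov(X)$, the $I$-faces are computed in the wrong ambient orthant (there are up to $2^{16}$ faces to test, not $2^{9}$), and the resulting GIT fan is meaningless. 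Once you correct the generating set and the weight matrix, the rest of your outline matches the paper's (implicit) proof.
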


\section*{Appendix}

Here we collect some data on the structure of central fibres of crepant resolutions of $\bC^3/G$ for the 21-element group $G$ from section~\ref{section_21}. By Proposition~\ref{proposition_21_trihedral} the $\Mov$ cone decomposes into the central chamber $\sigma_0$ and three \emph{corner} chambers $\sigma_1, \sigma_2, \sigma_3$. By~$I$ we denote the ideal of $\Spec \cR\subset \bC^{16}$ given by generators produced from the invariants in Proposition~\ref{gen_21}, as in Theorem~\ref{theorem_valuation_lifting}.

We start with a description of the components of the subset $S_0 \subset \Spec \cR(X)$ mapped to the central fibre (its ideal can be computed as in section~\ref{section_central_fibre_odd}). Then we present four tables of orbits of the action of $(\bC^{*})^{16}$ on $\bC^{16}$, which cover $S_0$ and are stable with respect to a linearisation chosen from a chamber interior. In each table, \emph{equations} are vanishings of coordinates describing the orbit, $dim$ is the dimension of the orbit, and $dim(\cap)$ is the dimension of the intersection of the orbit with $\Spec \cR(X)$.

\begin{proposition}
The subset $S_0 \subset \Spec \cR(X)\subset \bC^{16} = \Spec \bC[T_1,\ldots,T_{16}]$ mapped to the central fibre of a resolution $X\to \bC^{3}/G$ has three components
\begin{eqnarray*}
& V(I+T_{14}), \ V(T_{1},T_{2},T_{4},T_{5},T_{6},T_{7},T_{9},T_{10},T_{11},T_{12},T_{15}),\\
& V(T_1,T_2,T_3,T_5,T_6,T_7,T_9,T_{10},T_{11},T_{13},T_{16}).
\end{eqnarray*}
and two more, which are unstable for any chamber.
General points on the first one are always stable; it corresponds to an exceptional divisor. General points on the second one are stable only for~$\sigma_{1}$ and general points on the third one are stable only for~$\sigma_{2}$.
\end{proposition}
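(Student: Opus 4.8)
The plan is to follow the central-fibre procedure of section~\ref{section_central_fibre_odd}: build the ideal of $S_0$, split it into components, and then run the orbit-cone stability test of section~\ref{section_methods_Mov} on each. First I would compute the full invariant ring $\bC[x,y,z]^G$ (rather than just $\bC[x,y,z]^{[G,G]}$) together with a generating set. Each such generator is $\Ab(G)$-homogeneous of trivial weight, hence lifts through $\ovl{\Theta}$ to a $\bT_X$-invariant element of $\cR(X)$: one writes it as a polynomial in $F_1, G_1, \ldots, M_3$ and multiplies by the appropriate monomial in the exceptional coordinates $T_{14}, T_{15}, T_{16}$ to make it $\bT_X$-invariant. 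The ideal of $S_0$ is then $J = I + (\text{these lifts})$, equivalently the ideal generated by $I$ together with the non-constant $\bT_X$-invariants, exactly as $J_k$ was formed in section~\ref{section_central_fibre_odd}.

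Next I would compute the minimal primes of $J$ over $\bQ(\zeta_3)$ in Singular or Macaulay2, which should produce five components: the three listed ones plus two more. To certify the listed ideals I would check that each is prime and contains $J$, and that the intersection of all five primes equals $\sqrt{J}$, so that they exhaust $V(J) = S_0$. As in the dihedral case, the guiding idea is to isolate the low-degree relations in $I$ among $F_1, G_i, H_i$ that force the splitting and that explain why $T_{14}$ alone, respectively the large coordinate subsets, cuts out a component.

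The stability of each component $W$ is then read off from its \emph{orbit cone} $\tau_W = \operatorname{cone}(u_i : T_i \text{ is generically non-zero on } W)$, where $u_i$ are the columns of the $3 \times 16$ weight matrix of the $\bT_X$-action. A general point of $W$ is stable for a linearisation $w$ precisely when $w \in \operatorname{relint}(\tau_W)$, so the whole question reduces to testing, for each of the chambers $\sigma_0, \sigma_1, \sigma_2, \sigma_3$ of Proposition~\ref{proposition_21_trihedral}, whether its interior lies in $\operatorname{relint}(\tau_W)$. For $V(I + T_{14})$ only $T_{14}$ vanishes, so $\tau_W$ is full-dimensional with $\Mov(X) \subseteq \tau_W$; hence every chamber interior lies in $\operatorname{relint}(\tau_W)$ and the component is stable for all four resolutions. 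Moreover $T_{14}$ is the section of the exceptional divisor attached by Corollary~\ref{corollary_mckay} to the unique age-$2$ class, which is contracted to $[0]$, so this component is exactly that divisor sitting inside the central fibre. For the second and third components the many vanishing coordinates shrink $\tau_W$ until its relative interior meets only $\sigma_1$, respectively only $\sigma_2$; and for the two remaining components $\tau_W$ contains no chamber interior at all, so they are unstable for every linearisation and drop out of the true central fibre.

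I expect the main obstacle to be the primary decomposition of $J$ in $\bC^{16}$ --- the same kind of heavy computation that already obstructed the direct Cox-ring verification in Proposition~\ref{gen_21} --- so it will likely require exploiting the explicit relations of $I$, or a convenient change of coordinates, to remain feasible. Once the components are in hand, however, the remaining work is purely combinatorial, namely intersecting the orbit cones with the chambers, and is carried out with the Singular package described in section~\ref{section_methods_Mov}.
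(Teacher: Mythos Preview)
Your approach is essentially the same as the paper's, and is correct. The one cosmetic difference is in how the ideal $J$ of $S_0$ is formed: the paper computes the generators of the ring of Picard-torus invariants directly (using \texttt{4ti2}), whereas you pass through $\bC[x,y,z]^G$ and lift; as already noted in section~\ref{section_central_fibre_odd} these produce the same ideal, so there is no real divergence. The paper's proof does not carry out or record the stability checks---those are implicit in the tables that follow---but your orbit-cone argument is precisely the test described in section~\ref{section_methods_Mov}, so that portion is also aligned. Your worry about the primary decomposition is reasonable but in practice the decomposition in Singular terminates without special tricks here; this is not the same bottleneck that obstructed Proposition~\ref{gen_21}.
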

\begin{proof}
One decomposes, e.g. in Singular~\cite{Singular}, the ideal generated by~$I$ and the generators of the ring of invariants of the Picard torus action (which can be computed in 4ti2~\cite{4ti2}).
\end{proof}

\bigskip
\subsection*{Stable orbits for the central fibre for {$\sigma_0$}}
\leavevmode

{\small
\begin{tabular}{c|c|c}
equation & dim & dim ($\cap$)\\
\hline
 \hline $T_{14} = 0$ & $15$ & $5$ \\ 
 \hline $T_{1} = T_{14} = 0$ & $14$ & $4$ \\ 
 \hline $T_{2} = T_{14} = 0$ & $14$ & $4$ \\ 
 \hline $T_{3} = T_{14} = 0$ & $14$ & $4$ \\ 
 \hline $T_{4} = T_{14} = 0$ & $14$ & $4$ \\ 
 \hline $T_{1} = T_{5} = T_{6} = T_{7} = T_{8} = T_{9} = T_{10} = T_{14} = 0$ & $8$ & $4$ \\ 
 \hline $T_{1} = T_{2} = T_{3} = T_{4} = T_{5} = T_{6} = T_{7} = T_{8} = T_{9} = T_{10} = T_{14} = 0$ & $5$ & $3$ \\ 
 \hline $T_{2} = T_{5} = T_{7} = T_{9} = T_{10} = T_{11} = T_{12} = T_{14} = T_{15} = 0$ & $7$ & $4$ \\ 
 \hline $T_{2} = T_{3} = T_{5} = T_{7} = T_{9} = T_{10} = T_{11} = T_{12} = T_{14} = T_{15} = 0$ & $6$ & $3$ \\ 
 \hline $T_{2} = T_{5} = T_{6} = T_{9} = T_{10} = T_{11} = T_{13} = T_{14} = T_{16} = 0$ & $7$ & $4$ \\ 
 \hline $T_{2} = T_{4} = T_{5} = T_{6} = T_{9} = T_{10} = T_{11} = T_{13} = T_{14} = T_{16} = 0$ & $6$ & $3$ \\ 
 \hline $T_{1} = T_{2} = T_{5} = T_{6} = T_{7} = T_{9} = T_{10} = T_{14} = T_{15} = T_{16} = 0$ & $6$ & $4$ \\ 
 \hline $T_{1} = T_{2} = T_{5} = T_{6} = T_{7} = T_{8} = T_{9} = T_{10} = T_{14} = T_{15} = T_{16} = 0$ & $5$ & $3$ \\ 
 \hline $T_{1} = T_{2} = T_{5} = T_{6} = T_{7} = T_{9} = T_{10} =$ & & \\
 $=T_{11} = T_{12} = T_{13} = T_{14} = T_{15} = T_{16} = 0$ & $3$ & $3$ \\
\end{tabular}
}

\bigskip
\subsection*{Stable orbits for the central fibre for  {$\sigma_1$}}
\leavevmode

{\small
\begin{tabular}{c | c | c}
equation & dim & dim ($\cap$)\\
\hline
 \hline $T_{14} =  0$ & $15$ & $5$ \\ 
 \hline $T_{1} = T_{14} =  0$ & $14$ & $4$ \\ 
 \hline $T_{2} = T_{14} =  0$ & $14$ & $4$ \\ 
 \hline $T_{3} = T_{14} =  0$ & $14$ & $4$ \\ 
 \hline $T_{4} = T_{14} =  0$ & $14$ & $4$ \\ 
 \hline $T_{1} = T_{5} = T_{6} = T_{7} = T_{8} = T_{9} = T_{10} = T_{14} =  0$ & $8$ & $4$ \\ 
 \hline $T_{1} = T_{2} = T_{3} = T_{4} = T_{5} = T_{6} = T_{7} = T_{8} = T_{9} = T_{10} = T_{14} =  0$ & $5$ & $3$ \\ 
  \hline $T_{1} = T_{2} = T_{4} = T_{5} = T_{6} = T_{7} = T_{9} = T_{10} = T_{11} = T_{12} = T_{15} =  0$ & $5$ & $4$ \\ 
 \hline $T_{1} = T_{2} = T_{4} = T_{5} = T_{6} = T_{7} = T_{9} = T_{10} = T_{11} = T_{12} = T_{13} = T_{15} =  0$ & $4$ & $3$ \\ 
 \hline $T_{2} = T_{5} = T_{7} = T_{9} = T_{10} = T_{11} = T_{12} = T_{14} = T_{15} =  0$ & $7$ & $4$ \\ 
 \hline $T_{2} = T_{3} = T_{5} = T_{7} = T_{9} = T_{10} = T_{11} = T_{12} = T_{14} = T_{15} =  0$ & $6$ & $3$ \\ 
 \hline $T_{1} = T_{2} = T_{5} = T_{6} = T_{7} = T_{9} = T_{10} = T_{14} = T_{15} = T_{16} = 0$ & $6$ & $4$ \\ 
 \hline $T_{1} = T_{2} = T_{5} = T_{6} = T_{7} = T_{8} = T_{9} = T_{10} = T_{14} = T_{15} = T_{16} =  0$ & $5$ & $3$ \\ 
 \hline $T_{1} = T_{2} = T_{4} = T_{5} = T_{6} = T_{7} = T_{9} = $ & & \\
 $=T_{10} = T_{11} = T_{12} = T_{14} = T_{15} = T_{16} =  0$ & $3$ & $3$
\end{tabular}
}

\vfill
\newpage

\subsection*{Stable orbits for the central fibre for  {$\sigma_2$}}
\leavevmode

{\small
\begin{tabular}{c|c|c}
equation & dim & dim ($\cap$)\\
\hline
 \hline $T_{14} = 0$ & $15$ & $5$ \\ 
 \hline $T_{1} = T_{14} = 0$ & $14$ & $4$ \\ 
 \hline $T_{2} = T_{14} = 0$ & $14$ & $4$ \\ 
 \hline $T_{3} = T_{14} = 0$ & $14$ & $4$ \\ 
 \hline $T_{4} = T_{14} = 0$ & $14$ & $4$ \\ 
 \hline $T_{1} = T_{5} = T_{6} = T_{7} = T_{8} = T_{9} = T_{10} = T_{14} = 0$ & $8$ & $4$ \\ 
 \hline $T_{1} = T_{2} = T_{3} = T_{4} = T_{5} = T_{6} = T_{7} = T_{8} = T_{9} = T_{10} = T_{14} = 0$ & $5$ & $3$ \\ 
 \hline $T_{1} = T_{2} = T_{3} = T_{5} = T_{6} = T_{7} = T_{9} = T_{10} = T_{11} = T_{13} = T_{16} = 0$ & $5$ & $4$ \\ 
 \hline $T_{1} = T_{2} = T_{3} = T_{5} = T_{6} = T_{7} = T_{9} = T_{10} = T_{11} = T_{12} = T_{13} = T_{16} = 0$ & $4$ & $3$ \\ 
 \hline $T_{2} = T_{5} = T_{6} = T_{9} = T_{10} = T_{11} = T_{13} = T_{14} = T_{16} = 0$ & $7$ & $4$ \\ 
 \hline $T_{2} = T_{4} = T_{5} = T_{6} = T_{9} = T_{10} = T_{11} = T_{13} = T_{14} = T_{16} = 0$ & $6$ & $3$ \\ 
 \hline $T_{1} = T_{2} = T_{5} = T_{6} = T_{7} = T_{9} = T_{10} = T_{14} = T_{15} = T_{16} = 0$ & $6$ & $4$ \\ 
 \hline $T_{1} = T_{2} = T_{5} = T_{6} = T_{7} = T_{8} = T_{9} = T_{10} = T_{14} = T_{15} = T_{16} = 0$ & $5$ & $3$ \\ 
 \hline $T_{1} = T_{2} = T_{3} = T_{5} = T_{6} = T_{7} = T_{9} = $ & & \\
 $= T_{10} = T_{11} = T_{13} = T_{14} = T_{15} = T_{16} = 0$ & $3$ & $3$ \\
\end{tabular}
}

\bigskip
\subsection*{Stable orbits for the central fibre for  {$\sigma_3$}}
\leavevmode

{\small
\begin{tabular}{c|c|c}
equation & dim & dim ($\cap$)\\
\hline
 \hline $T_{14} = 0$ & $15$ & $5$ \\ 
 \hline $T_{1} = T_{14} = 0$ & $14$ & $4$ \\ 
 \hline $T_{2} = T_{14} = 0$ & $14$ & $4$ \\ 
 \hline $T_{3} = T_{14} = 0$ & $14$ & $4$ \\ 
 \hline $T_{4} = T_{14} = 0$ & $14$ & $4$ \\ 
 \hline $T_{1} = T_{5} = T_{6} = T_{7} = T_{8} = T_{9} = T_{10} = T_{14} = 0$ & $8$ & $4$ \\ 
 \hline $T_{1} = T_{2} = T_{3} = T_{4} = T_{5} = T_{6} = T_{7} = T_{8} = T_{9} = T_{10} = T_{14} = 0$ & $5$ & $3$ \\ 
 \hline $T_{5} = T_{6} = T_{7} = T_{8} = T_{9} = T_{10} = T_{11} = T_{12} = T_{13} = T_{14} = 0$ & $6$ & $4$ \\ 
 \hline $T_{1} = T_{5} = T_{6} = T_{7} = T_{8} = T_{9} = T_{10} = T_{11} = T_{12} = T_{13} = T_{14} = 0$ & $5$ & $3$ \\ 
 \hline $T_{2} = T_{5} = T_{7} = T_{9} = T_{10} = T_{11} = T_{12} = T_{14} = T_{15} = 0$ & $7$ & $4$ \\ 
 \hline $T_{2} = T_{3} = T_{5} = T_{7} = T_{9} = T_{10} = T_{11} = T_{12} = T_{14} = T_{15} = 0$ & $6$ & $3$ \\ 
 \hline $T_{2} = T_{5} = T_{6} = T_{9} = T_{10} = T_{11} = T_{13} = T_{14} = T_{16} = 0$ & $7$ & $4$ \\ 
 \hline $T_{2} = T_{4} = T_{5} = T_{6} = T_{9} = T_{10} = T_{11} = T_{13} = T_{14} = T_{16} = 0$ & $6$ & $3$ \\ 
 \hline $T_{2} = T_{5} = T_{6} = T_{7} = T_{8} = T_{9} = T_{10} = $ & & \\ 
 $ = T_{11} = T_{12} = T_{13} = T_{14} = T_{15} = T_{16} = 0$ & $3$ & $3$ \\
\end{tabular}
}

\bibliographystyle{plain}
\bibliography{threedim}

\end{document}